\documentclass[10pt]{amsart}
\usepackage{amssymb,amsbsy,amsmath,amsfonts,times, amscd}
\usepackage{latexsym,euscript,exscale}
\usepackage{graphicx,color} \usepackage{amsthm}
\usepackage{epigraph}\usepackage{lmodern}
\usepackage{mathrsfs}\usepackage{cancel}
\usepackage[toc,page]{appendix}\usepackage[T1]{fontenc}
\usepackage[all,cmtip]{xy}\usepackage{mathtools}
\usepackage{enumerate}\usepackage{setspace}
\usepackage{helvet}%&LaTeX 
\usepackage{float}
		
\numberwithin{equation}{section}
\setlength{\parskip}{0 pt}
\linespread {1.0}

\newtheorem{theorem}{Theorem}
\newtheorem{thm}[theorem]{Theorem}
\newtheorem{cor}[theorem]{Corollary}
\newtheorem{lemma}[theorem]{Lemma}
\newtheorem{prop}[theorem]{Proposition}
\theoremstyle{definition}

\newtheorem{problem}[theorem]{Problem}
\theoremstyle{remark}
\newtheorem{remark}[theorem]{Remark}

\numberwithin{theorem}{section}

\newcommand{\vertiii}[1]{{\left\vert\kern-0.25ex\left\vert\kern-0.25ex\left\vert #1 
    \right\vert\kern-0.25ex\right\vert\kern-0.25ex\right\vert}}

\newcommand {\M}{\mathbb M}

\newcommand{\eps}{\varepsilon}

\newcommand{\SB}{\text{SB}}

\newcommand{\Q}{\mathbb{Q}}
\newcommand{\N}{\mathbb{N}}
\newcommand{\R}{\mathbb{R}}

\begin{document}

\title[Asymptotic structure and coarse Lipschitz geometry.]{Asymptotic structure and coarse Lipschitz geometry of Banach spaces.}
\subjclass[2010]{Primary: 46B80} 
 \keywords{   }
\author{B. M. Braga }
\address{Department of Mathematics, Statistics, and Computer Science (M/C 249)\\
University of Illinois at Chicago\\
851 S. Morgan St.\\
Chicago, IL 60607-7045\\
USA}\email{demendoncabraga@gmail.com}
\date{}
\maketitle

\begin{abstract}
In this paper, we study the coarse Lipschitz geometry of Banach spaces with several asymptotic properties. Specifically, we look at asymptotic uniform smoothness and convexity, and several distinct Banach-Saks-like properties. Among other results, we  characterize the Banach spaces which are either coarsely or uniformly homeomorphic to $T^{p_1}\oplus \ldots \oplus T^{p_n}$, where each $T^{p_j}$ denotes the $p_j$-convexification of the Tsirelson space, for $p_1,\ldots,p_n\in (1,\ldots, \infty)$, and $2\not\in\{p_1,\ldots ,p_n\}$. We obtain applications to the coarse Lipschitz geometry of the $p$-convexifications of the Schlumprecht space, and some hereditarily indecomposable Banach spaces. We also obtain some new results on the linear theory of Banach spaces.
\end{abstract}

\section{Introduction.}\label{sectionintro}

In this paper, we study nonlinear embeddings and nonlinear equivalences between Banach spaces. For that, we look at a Banach space $(X,\|\cdot\|)$ as a metric space endowed with the metric $\|\cdot-\cdot\|$. Let $(M,d)$ and $(N,\partial)$ be metric spaces, and $f:M\to N$ be a map. For each $t\in[0,\infty)$, we define the \emph{expansion modulus of $f$} as 

$$\omega_f(t)=\sup\{\partial (f(x),f(y))\mid d(x,y)\leq t\}$$\hfill

\noindent  and the \emph{compression modulus of $f$} as

$$\rho_f(t)=\inf\{\partial (f(x),f(y))\mid d(x,y)\geq t\}.$$\hfill

\noindent We say that $f$ is a \emph{coarse map} if  $\omega_f(t)<\infty$, for all $t\in [0,\infty)$. If, in addition, $\lim_{t\to\infty}\rho_f(t)=\infty$, then $f$ is a \emph{coarse embedding}. We say that $f$ is a \emph{coarse equivalence} if $f$ is both a coarse embedding and \emph{cobounded}, i.e., $\sup_{y\in N}\partial (y,f(M))<\infty$.  The map $f$ is a \emph{uniform embedding} if  $\lim_{t\to 0_+}\omega_f(t)=0$ and  $\rho_f(t)>0$, for all $t\in (0,\infty)$. A surjective uniform embedding is called a \emph{uniform homeomorphism}.  If there exists $L>0$ such that $\omega_f(t)\leq Lt+L$, for all $t\in [0,\infty)$, then we call $f$ a \emph{coarse Lipschitz map}. If, in addition,  $\rho_f(t)\geq L^{-1}t-L$, for all $t\in [0,\infty)$, then $f$ is a \emph{coarse Lipschitz embedding}.

 A  uniformly continuous map $f:X\to N$ from a Banach space $X$ to a metric space $N$ is automatically  a coarse map (see \cite{K3}, Lemma 1.4). Similarly, $f:X\to M$ is a coarse map  if and only if it is a coarse Lipschitz map (see \cite{K3}, Lemma 1.4). Also, if two Banach spaces $X$ and $Y$ are coarsely equivalent (resp. uniformly homeomorphic) then  $X$ coarse Lipschitz embeds into $Y$ (see \cite{K3}, Proposition 1.5).

In these notes, we are mainly interested in  what kind of stability properties those notions of nonlinear embeddings and nonlinear equivalences may have, and we will mainly work with Banach spaces having some kind of asymptotic property. More specifically, we are concerned with  asymptotically  uniformly smooth Banach spaces, asymptotically uniformly convex Banach spaces, and Banach spaces having several different Banach-Saks-like properties (we refer to Section \ref{background} for precise definitions).

The following general question is a central problem when dealing with nonlinear embeddings between Banach spaces.

\begin{problem}\label{problem1}
Let $\mathcal{P}$ and $\mathcal{P}'$ be two classes of Banach spaces and $\mathcal{E}$ be a kind of nonlinear embedding between Banach spaces. If a Banach space $X$ $\mathcal{E}$-embeds into a Banach space $Y$ in $\mathcal{P}$, does it follow that $X$ is in $\mathcal{P}'$?  
\end{problem}

For example, if a separable Banach space $X$ coarse Lipschitz embeds into a super-reflexive Banach space, then $X$ is also super-reflexive (this follows from Proposition 1.6 of \cite{K3} and Theorem 2.4 of \cite{K3}, but it was first proved  for uniform equivalences in \cite{R}, Theorem 1A).  Another example was given by  M. Mendel and A. Naor in \cite{MN} (Theorem 1.11), where they showed that if a Banach space $X$ coarsely  embeds into a Banach space $Y$ with cotype $q$ and non trivial type, then $X$ has cotype $q+\eps$, for all $\eps>0$. 

If we look at  nonlinear equivalences between Banach spaces, the following is a central problem in the theory.

\begin{problem}\label{problem2}
Let $X$ be a Banach space and $\mathcal{E}$ be a kind of nonlinear equivalence  between Banach spaces. If a Banach space $Y$ is $\mathcal{E}$-equivalent to $X$, what can we say about the isomorphism type of $Y$? More precisely:

\begin{enumerate}[(i)]
\item Is the linear structure of $X$ determined by its $\mathcal{E}$-structure, i.e., if a Banach space $Y$ is $\mathcal{E}$-equivalent to $X$, does it follow that $Y$ is linearly isomorphic to $X$? 
\item Let $\mathcal{P}$ be a class of Banach spaces.  If $Y$ is $\mathcal{E}$-equivalent to $X$, does is follow that $Y$ is linearly isomorphic to $X\oplus Z$, for some Banach space $Z$ in $\mathcal{P}$? 
\end{enumerate}
\end{problem}

Along those lines, it was shown in \cite{JLS} (Theorem 2.1) that  the coarse (resp. uniform) structure of $\ell_p$ completely determines its linear structure, for any $p\in (1,\infty)$. For $p=1$,  we do not even know if the Lipschitz structure of  $\ell_1$ determines its linear structure.  N. Kalton and N. Randrianarivony proved in \cite{KR} (Theorem 5.4) that,  for any $p_1,\ldots, p_n\in (1,\infty)$ with $2\not\in \{p_1,\ldots,p_n\}$,  the linear structure of $\ell_{p_1}\oplus\ldots\oplus \ell_{p_n}$ is determined by its coarse (resp. uniform) structure (see also \cite{JLS}, Theorem 2.2). This problem is still open if $2\in\{p_1,\ldots,p_n\}$.

Let $T$ denote the Tsirelson space introduced by T. Fiegel and W. Johnson in \cite{FJ}. For each $p\in [1,\infty)$, let $T^p$ be the $p$-convexification of $T$ (see Subsection \ref{submixtsi} for definitions). W. Johnson, J. Lindenstrauss and G. Schechtman addressed Problem \ref{problem2}(ii) above by  proving  the following (see \cite{JLS}, Theorem 5.8): suppose that either $1<p_1<\ldots<p_n<2$ or $2<p_1<\ldots<p_n$ and set $X=T^{p_1}\oplus \ldots\oplus T^{p_n}$, then  a Banach space $Y$ is coarsely  equivalent (resp. uniformly homeomorphic) to $X$ if and only if $Y$ is linearly isomorphic to $X\oplus \bigoplus_{j\in F}\ell_{p_j}$, for some $F\subset \{1,\ldots, n\}$.

We now describe the organization and some of the results of this paper. Firstly, in order not to make this introduction too extensive, we will postpone some technical definitions for later as well as our more technical results. The reader will find  all the background and notation necessary for this paper in Section \ref{background}.

Along the lines of Problem \ref{problem1}, we prove the following in Section \ref{sectionstableemb}.

\begin{thm}\label{asympbanachsaksref}
Let $Y$ be a  reflexive asymptotically uniformly smooth  Banach space, and assume that a Banach space $X$ coarse Lipschitz embeds into $Y$. Then $X$ has the Banach-Saks property. 
\end{thm}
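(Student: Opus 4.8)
The plan is to combine two ingredients: a "concentration-type" property of coarse Lipschitz maps into reflexive asymptotically uniformly smooth (AUS) spaces, and the fact that failure of the Banach-Saks property produces, via Ramsey/combinatorial arguments, a sequence whose finite averages behave like an $\ell_1$-type structure that cannot be compressed. More precisely, I would argue by contradiction: suppose $X$ does \emph{not} have the Banach-Saks property, so there is a bounded sequence $(x_k)$ in $X$ with no subsequence whose Cesàro averages converge in norm. By a standard argument (Nishiura--Waterman, or the Erd\H{o}s--Magidor combinatorial dichotomy), one may pass to a subsequence, still denoted $(x_k)$, and find $\delta>0$ such that for every $n$ and every choice $k_1<\dots<k_{2n}$,
\[
\Bigl\|\tfrac1n(x_{k_1}+\dots+x_{k_n})-\tfrac1n(x_{k_{n+1}}+\dots+x_{k_{2n}})\Bigr\|\geq\delta.
\]
Rescaling, the points $u_{A}=\sum_{k\in A}x_k$ indexed by $n$-subsets $A$ of $\N$ form a net on which the metric is controlled: $\|u_A-u_B\|\le C|A\triangle B|$ (Lipschitz-type upper bound from boundedness of $(x_k)$) while $\|u_A-u_B\|\ge \delta n$ whenever $A,B$ are disjoint $n$-sets. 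This is exactly the kind of "many points, pairwise far apart, but each close to a translate of the others" configuration that obstructs coarse Lipschitz embeddings into AUS spaces.

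Next I would transport this configuration through the coarse Lipschitz embedding $f:X\to Y$ and invoke the asymptotic smoothness of $Y$. The key technical tool is the now-standard Kalton--Randrianarivony argument: if $Y$ is reflexive and AUS with modulus $\overline{\rho}_Y(t)\lesssim t^p$ for some $p>1$ (reflexive AUS implies such a power-type estimate after renorming, by the results quoted in Section \ref{background}), then for a coarse Lipschitz map one has an inequality roughly of the form
\[
\liminf_{k_n\to\infty}\cdots\liminf_{k_1\to\infty}\ \partial\bigl(f(u_A),f(u_B)\bigr)\ \lesssim\ \mathrm{Lip}_\infty(f)\cdot n^{1/p}
\]
for disjoint $n$-sets $A,B$ — the averaging over a long "spreading" kills the linear-in-$n$ growth and leaves only $n^{1/p}$. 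On the other hand, the lower compression bound of the coarse Lipschitz embedding gives $\partial(f(u_A),f(u_B))\ge L^{-1}\delta n - L$, which grows linearly in $n$. Since $p>1$, for $n$ large enough $L^{-1}\delta n - L > C\, n^{1/p}$, a contradiction. Hence $X$ must have the Banach-Saks property.

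The main obstacle — and the step requiring the most care — is establishing the upper estimate on $\partial(f(u_A),f(u_B))$ after iterated limits, i.e., the "approximate midpoint / asymptotic smoothness along a tree of subsets" lemma. One has to set up the combinatorics of $n$-subsets of $\N$ correctly (working with a weakly null perturbation of $(x_k)$, using that $(u_A)$ "almost" splits into asymptotic blocks as the indices are pushed to infinity), and then apply the AUS modulus repeatedly, keeping track that the error terms from the coarse Lipschitz (rather than genuinely Lipschitz) hypothesis stay additive and are dominated. A secondary point is the reduction from "Banach-Saks fails" to the clean separated-averages configuration above; this is classical but I would want to state it as a lemma. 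I expect no difficulty from the reflexivity hypothesis itself — it is used precisely to guarantee the power-type AUS renorming and to ensure the relevant weakly convergent subsequences exist.
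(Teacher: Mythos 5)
Your proposal is correct and follows essentially the same route as the paper: the Beauzamy-type separated-averages criterion for failure of Banach--Saks, the maps $\bar n\mapsto x_{n_1}+\dots+x_{n_k}$ on $G_k(\N)$, and the Kalton--Randrianarivony concentration theorem for Lipschitz maps on $G_k(\M)$ into reflexive asymptotically $p$-uniformly smooth spaces, yielding the contradiction $k\lesssim k^{1/p}$. The technical step you flag as the main obstacle is exactly the paper's Theorem \ref{KR4.2}, which is quoted as a black box, so no further work is needed there.
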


As the Banach-Saks property implies reflexivity, Theorem \ref{asympbanachsaksref} above is a strengthening of Theorem 4.1 of \cite{BKL}, where the authors showed that if a separable Banach space $X$ coarse 
Lipschitz embeds into a reflexive asymptotically uniformly smooth Banach space, then $X$ must be reflexive. As $T$ is a reflexive Banach space without the Banach-Saks property, Theorem \ref{asympbanachsaksref} gives us  the following new  corollary.

\begin{cor}
The Tsirelson space does not coarse Lipschitz embed into any reflexive asymptotically uniformly smooth Banach space. 
\end{cor}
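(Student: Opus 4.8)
The plan is to deduce the corollary directly from Theorem~\ref{asympbanachsaksref}, combined with the classical fact that the Tsirelson space $T$ of \cite{FJ} is reflexive but fails the Banach--Saks property.

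First I would argue by contradiction. Suppose that $T$ coarse Lipschitz embeds into some reflexive asymptotically uniformly smooth Banach space $Y$. Then Theorem~\ref{asympbanachsaksref}, applied with $X=T$, yields that $T$ has the Banach--Saks property. However, this is well known to fail: the unit vector basis $(e_n)$ of $T$ is weakly null, being shrinking since $T$ is reflexive, yet no subsequence of it has norm-convergent Ces\`aro means. Indeed, by the admissibility condition defining the Tsirelson norm one has $\|e_{m+1}+\cdots+e_{2m}\|\geq m/2$ for every $m$, and more generally, splitting a far-out block of any subsequence of the basis into singletons produces an admissible family, so the Ces\`aro averages of any subsequence of $(e_n)$ stay bounded away from $0$. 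This contradicts the Banach--Saks property of $T$, and hence no such $Y$ exists, which is exactly the statement of the corollary.

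The main (indeed the only) obstacle is entirely contained in Theorem~\ref{asympbanachsaksref}; once that theorem is in hand, the corollary is immediate. The single point worth double-checking is that the failure of the Banach--Saks property for $T$ recalled above is precisely the negation of the conclusion of Theorem~\ref{asympbanachsaksref}, namely the existence of a weakly null sequence admitting no Ces\`aro-summable subsequence, which is the standard formulation of that property. No separability hypothesis is needed here, since $T$ is separable anyway.
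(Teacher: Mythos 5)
Your proposal is correct and is exactly the paper's argument: the paper derives the corollary by noting that $T$ is reflexive but fails the Banach--Saks property and invoking Theorem~\ref{asympbanachsaksref}. Your added verification that the admissibility condition forces $\|e_{m+1}+\cdots+e_{2m}\|\geq m/2$, so Ces\`aro means of any subsequence of the (weakly null) basis cannot converge, correctly fills in the classical fact the paper takes for granted.
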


In Section  \ref{sectionstableemb}, we also prove some results on the linear theory of Banach spaces. Precisely, we show that an asymptotically uniformly smooth Banach space $X$ must have the alternating Banach-Saks property (see Corollary \ref{asympbansaks}).  Using descriptive set theoretical arguments, we also show that the converse does not hold, i.e., that there are Banach spaces with the alternating Banach-Saks property which do not admit an asymptotically uniformly smooth renorming (see Proposition \ref{8978}).

In Section \ref{sectionconvsmooth}, we study coarse  embeddings $f:X\to Y$ between Banach spaces $X$ and $Y$ with specific asymptotic properties, and  obtain a general result on how  close to an affine map the compression modulus $\rho_f$ can be (see Theorem \ref{upperpart}). Precisely, E. Guentner and J. Kaminker introduced the following quantity in \cite{GuKa}: for Banach spaces $X$ and $Y$,  define $\alpha_Y(X)$ as the supremum of all $\alpha>0$  for which there exists a coarse embedding $f:X\to Y$ and $L>0$ such that   $\rho_f(t)\geq L^{-1}t^\alpha-L$, for all $t\geq 0$. We call $\alpha_Y(X)$ the \emph{compression exponent of $X$ in $Y$}. As a simple consequence of Theorem \ref{upperpart}, we obtain Theorem \ref{corcorQuan} below.

We denote by $S$ the Schlumprecht space introduced in \cite{S}, and, for each $p\in [1,\infty)$, we let $S^p$ be the $p$-convexification of $S$  and $T^p$ be the $p$-convexification of the Tsirelson space $T$ (see Subsection \ref{submixtsi} for definitions).

\begin{thm}\label{corcorQuan}
Let $1\leq p<q$. Then 

\begin{enumerate}[(i)]
\item $\alpha_{T^q}(T^p)\leq p/q$, and
\item $\alpha_{S^q}(S^p)\leq p/q$.
\end{enumerate}

\noindent In particular, $T^p$ (resp. $S^p$) does not coarse Lipschitz embed into $T^q$ (resp. $S^q$).
\end{thm}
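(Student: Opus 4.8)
The plan is to deduce Theorem \ref{corcorQuan} from Theorem \ref{upperpart} (the general upper bound on $\rho_f$) together with the standard facts about the asymptotic structure of the $p$-convexifications. The key observation is that $T^p$ is asymptotically uniformly smooth with power type $p$ (its modulus of asymptotic uniform smoothness behaves like $t^p$), and dually $T^p$ is asymptotically uniformly convex with power type $p$ as well, since $T^p$ is reflexive and the $p$-convexification construction produces a space whose asymptotic structure is $\ell_p$-like; the same is true for $S^p$, where one uses that the Schlumprecht space, while not equal to any $\ell_p$, still has an $\ell_1$-asymptotic (and its $p$-convexification an $\ell_p$-asymptotic) upper estimate on successive blocks. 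Concretely, in $T^q$ there is an upper $\ell_q$-estimate on successive normalized block vectors, while in $T^p$ (for $p<q$) the spreading models of normalized block bases are $\ell_p$, which is "as spread out as" $\ell_p$ and hence strictly worse than $\ell_q$.

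First I would recall the precise statement of Theorem \ref{upperpart}: for a coarse embedding $f\colon X\to Y$ where $X$ contains good $\ell_p$-like asymptotic structure and $Y$ is asymptotically uniformly smooth with power type $q$ (or more generally has an upper $\ell_q$ tree estimate), the compression modulus must satisfy $\rho_f(t)\lesssim t^{p/q}$ up to additive and multiplicative constants; equivalently $\alpha_Y(X)\le p/q$. Then I would verify the two hypotheses for the pairs $(T^p,T^q)$ and $(S^p,S^q)$: (1) $T^q$ (resp.\ $S^q$) satisfies the required upper $\ell_q$-asymptotic smoothness estimate — this is classical, the Tsirelson norm dominates the $\ell_q$-norm on successive blocks after the $q$-convexification, and Schlumprecht's norm has the analogous $\ell_1$ upper estimate which $q$-convexifies to an $\ell_q$ upper estimate; (2) $T^p$ (resp.\ $S^p$) contains, in a finite-dimensional-block sense uniformly, copies of $\ell_p^n$ spanned by successive blocks with uniformly bounded constant — for $T^p$ this is the defining property of the Tsirelson construction (arbitrarily long $\ell_1^n$'s on admissible successive blocks, $p$-convexified to $\ell_p^n$'s), and for $S^p$ it follows from the corresponding property of $S$. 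Feeding these into Theorem \ref{upperpart} gives $\alpha_{T^q}(T^p)\le p/q$ and $\alpha_{S^q}(S^p)\le p/q$.

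For the ``in particular'' clause, I would argue by contradiction: if $T^p$ coarse Lipschitz embeds into $T^q$ via $f$, then by definition of coarse Lipschitz embedding $\rho_f(t)\ge L^{-1}t-L$, so the exponent $\alpha=1$ is admissible, forcing $\alpha_{T^q}(T^p)\ge 1$; but $p<q$ gives $p/q<1$, contradicting part (i). The same argument with $S$ in place of $T$ handles the Schlumprecht case.

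The main obstacle I anticipate is purely bookkeeping rather than conceptual: one must make sure the asymptotic-smoothness/convexity estimates used in Theorem \ref{upperpart} are exactly the ones enjoyed by $T^q$ and $S^q$, and that the ``richness'' hypothesis on the domain is matched by the presence of uniformly-complemented or uniformly-spanned $\ell_p^n$-blocks in $T^p$ and $S^p$. For $T^p$ this is entirely standard (Casazza–Shura), but for $S^p$ one needs to invoke the specific block-basis estimates for the Schlumprecht space from \cite{S} and check they survive $p$-convexification; I would state this as a lemma citing the relevant computations rather than reproving them. Everything else is a direct substitution into the already-established Theorem \ref{upperpart}.
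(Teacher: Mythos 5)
Your part (i) is essentially the paper's argument: one checks that $T^p$ has the $p$-co-Banach-Saks property (the uniform lower $\ell_p$-estimate on $k\leq n_1<\ldots<n_k$ successive blocks coming from the admissible-sets lower bound in the Tsirelson norm), that $T^q$ is reflexive and asymptotically $q$-uniformly smooth, and one feeds this into Theorem \ref{upperpart} (in its compression-exponent form, Theorem \ref{upperpartQuan}); the ``in particular'' clause is exactly the contradiction $1\leq p/q<1$ you describe. Be careful only that the hypothesis Theorem \ref{upperpart} places on the \emph{domain} is a lower estimate (the $p$-co-Banach-Saks property), not asymptotic $p$-uniform smoothness; for $T^p$ both hold, so no harm is done there.

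Part (ii) has a genuine gap. You assert that $S^p$ ``contains, in a finite-dimensional-block sense uniformly, copies of $\ell_p^n$ spanned by successive blocks with uniformly bounded constant,'' deferring the verification to the block estimates in \cite{S}. Those estimates show the opposite: for the unit vector basis of $S$ one has $\|e_{n_1}+\ldots+e_{n_k}\|=k/\log_2(k+1)$, so after $p$-convexification the lower $\ell_p$-constant on $k$ successive unit vectors degrades like $\log_2(k+1)^{1/p}\to\infty$. Hence $S^p$ does \emph{not} have the $p$-co-Banach-Saks property (the paper states this explicitly in Subsection \ref{submixtsi}), and Theorem \ref{upperpart} as stated does not apply to the pair $(S^p,S^q)$. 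The missing idea is to weaken the domain hypothesis to tolerate a subpolynomial loss: the paper introduces the \emph{almost} $p$-co-Banach-Saks property, where the lower bound is $k^{1/p}\theta_k^{-1}$ with $k^{\alpha}\theta_k^{-1}\to\infty$ for every $\alpha>0$ (for $S^p$ one takes $\theta_k=\log_2(k+1)^{1/p}$), and reruns the argument of Theorem \ref{upperpart} to get $\rho_f(k^{1/p}\theta_k^{-1})\leq 3KCk^{1/q}$; since $\theta_k^{-\alpha}$ is subpolynomial this still forces $\alpha/p-1/q\leq 0$ (Theorem \ref{almostco}). Without this extra step, or some substitute for it, your deduction of (ii) does not go through.
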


The proof of Theorem \ref{corcorQuan} is asymptotically in nature, hence we obtain equivalent estimates for the compression exponent $\alpha_Y(X)$, where $X$ and $Y$ are  Banach spaces  satisfying some special asymptotic properties. In particular,   the spaces $T^q$ and $S^q$  can be replaced in Theorem \ref{corcorQuan} by $(\oplus_n E_n)_{T^q}$ and $(\oplus_n E_n)_{S^q}$, where $( E_n)_{n=1}^\infty$ is any sequence of finite dimensional Banach spaces. See Theorem \ref{upperpartQuan}, Theorem \ref{almostco} and Corollary \ref{memata} for  precise statements.

We also  apply our results to the hereditarily indecomposable Banach spaces $\mathfrak{X}^p$ defined by N. Dew in \cite{D}, and obtain that $\alpha_{\mathfrak{X}^q}(\mathfrak{X}^p)\leq p/q$, for $1<p<q$ (see Corollary \ref{herdind}).

\iffalse
\begin{thm}\label{tsip}
Let $1\leq p<q$. We have the following.

\begin{enumerate}[(i)]
\item $T^{p}$ does not coarse Lipschitz embed into $T^{q}$.
\item  $S^{p}$ does not coarse Lipschitz embed into $S^{q}$. 
\end{enumerate}
\end{thm}
\fi

In Section \ref{sectioncoarseembintosum}, we prove a general theorem regarding the non existence of coarse Lipschitz embeddings $X\to Y_1\oplus Y_2$, for Banach spaces $X,Y_1,Y_2$ with specific asymptotic properties (see Theorem \ref{jointlowerupper}). With that result in hands, we prove the following.

\begin{thm}\label{notell2}
Let $1\leq p_1<\ldots <p_n<\infty$, and $p\in[1,\infty)\setminus\{p_1,\ldots ,p_n\}$. Then neither $T^{p}$ nor $\ell_p$ coarse Lipschitz embed into $T^{p_1}\oplus \ldots \oplus T^{p_n}$.  In particular, $T^p$ does not coarse Lipschitz embed into $T^q$, for all $p,q\in [1,\infty)$ with $p\neq q$. 
\end{thm}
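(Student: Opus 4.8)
The plan is to deduce Theorem \ref{notell2} from the general result Theorem \ref{jointlowerupper} concerning non-existence of coarse Lipschitz embeddings into a sum $Y_1 \oplus Y_2$, combined with the asymptotic structural properties of the convexified Tsirelson spaces. The key point is that $T^{p_1} \oplus \ldots \oplus T^{p_n}$ can be analyzed by iterating a two-summand argument: write $Y_1 = T^{p_j}$ for the summand whose index is "closest" to $p$ in the relevant sense, and $Y_2 = \bigoplus_{i \neq j} T^{p_i}$. The space $T^{p}$ (resp. $\ell_p$) is asymptotically uniformly smooth with modulus of power type $p$ and asymptotically uniformly convex with modulus of power type $p$; each $T^{p_i}$ has the corresponding power-type $p_i$ behavior. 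Since all $p_i \neq p$, at each summand one of the two relevant inequalities (smoothness on one side, convexity on the other) is strict enough to force a contradiction via the Kalton–Randrianarivony-style approximate-midpoint or graph-embedding technique that underlies Theorem \ref{jointlowerupper}.

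First I would recall the asymptotic invariants: $T^p$ is $p$-AUS and $p$-AUC (and the same holds for $(\oplus_n E_n)_{T^p}$ and for $\ell_p$), so these spaces fit the hypotheses of Theorem \ref{jointlowerupper} on the domain side. On the target side, each $T^{p_i}$ is $p_i$-AUS, hence $\bigoplus_{i=1}^n T^{p_i}$ is $(\min_i p_i)$-AUS, and is also AUC in the appropriate sense coordinatewise. Second, I would set up an induction on $n$. The base case $n=1$ is exactly Theorem \ref{corcorQuan} (or its asymptotic refinement), which already gives that $T^p$ does not coarse Lipschitz embed into $T^q$ when $p \neq q$: if $p < q$ use $\alpha_{T^q}(T^p) \le p/q < 1$, and if $p > q$ use the dual estimate coming from AUC versus AUS. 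Third, for the inductive step with target $T^{p_1} \oplus \ldots \oplus T^{p_n}$, suppose $f: T^p \to T^{p_1}\oplus\ldots\oplus T^{p_n}$ is a coarse Lipschitz embedding. Apply Theorem \ref{jointlowerupper} with $Y_1 = T^{p_n}$ (say, the largest index) and $Y_2 = T^{p_1}\oplus\ldots\oplus T^{p_{n-1}}$: the theorem yields that $f$ essentially factors — up to the coarse Lipschitz approximation built into that result — through one of the two summands, i.e., $T^p$ coarse Lipschitz embeds either into $T^{p_n}$ or into $T^{p_1}\oplus\ldots\oplus T^{p_{n-1}}$. The first alternative contradicts the base case since $p \neq p_n$; the second is handled by the inductive hypothesis. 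The same scheme works verbatim with $T^p$ replaced by $\ell_p$, using that $\ell_p$ is $p$-AUS and $p$-AUC.

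The main obstacle I anticipate is verifying that the hypotheses of Theorem \ref{jointlowerupper} are genuinely met when the target is a direct sum with more than two summands — specifically, that $\bigoplus_{i \neq n} T^{p_i}$ retains the quantitative asymptotic smoothness/convexity profile needed to re-enter the theorem at the next stage, and that the "factoring through one summand" conclusion is stated in a strong enough form (coarse Lipschitz embedding, not merely coarse embedding) to keep the induction going. One must check that direct sums of AUS (resp. AUC) spaces are AUS (resp. AUC) with the minimum (resp. maximum) of the power types, which is standard, and that the constants do not degrade in a way that blocks iteration; since Theorem \ref{jointlowerupper} is presumably formulated precisely to be iterable, this should go through, but it requires care in tracking which of the two competing inequalities ($p < p_i$ forcing a smoothness obstruction, $p > p_i$ forcing a convexity obstruction) is invoked at each summand. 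Finally, the concluding sentence "$T^p$ does not coarse Lipschitz embed into $T^q$ for all $p \neq q$" is immediate by taking $n=1$ and $p_1 = q$.
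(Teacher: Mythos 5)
There is a genuine gap, and it sits at the center of your argument. Your inductive step relies on reading Theorem \ref{jointlowerupper} as saying that a coarse Lipschitz embedding $f:X\to Y_1\oplus Y_2$ must ``essentially factor through one of the two summands,'' so that $T^p$ would coarse Lipschitz embed into $T^{p_n}$ or into $T^{p_1}\oplus\ldots\oplus T^{p_{n-1}}$. That is not what the theorem says; it is a direct non-embeddability statement under the hypotheses $q_1<p<q_2$, with $Y_1$ carrying a $1$-unconditional basis satisfying a lower $\ell_{q_1}$-estimate and $Y_2$ reflexive and asymptotically $q_2$-uniformly smooth, and its proof runs the approximate-midpoint argument on the $Y_1$-coordinate and the Kalton--Randrianarivony $G_k(\M)$-argument on the $Y_2$-coordinate \emph{simultaneously}, never producing an embedding into a single summand. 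Indeed, the paper explicitly records the dichotomy you are invoking as an open problem. Moreover, with your split $Y_1=T^{p_n}$, $Y_2=\bigoplus_{i<n}T^{p_i}$ the hypotheses of Theorem \ref{jointlowerupper} are simply not met: you would need $T^{p_n}$ to satisfy a lower $\ell_{q_1}$-estimate with $q_1<p$ (false when $p_n>p$) and the remaining sum to be asymptotically $q_2$-uniformly smooth with $q_2>p$ (the sum is only asymptotically $p_1$-uniformly smooth, and $p_1$ may be less than $p$). The correct decomposition, used in the paper, groups the summands by whether $p_i<p$ or $p_i>p$: take $Y_1=(T^{p_1}\oplus\ldots\oplus T^{p_m})_{\ell_{p_m}}$ and $Y_2=(T^{p_{m+1}}\oplus\ldots\oplus T^{p_n})_{\ell_\infty}$ where $p\in(p_m,p_{m+1})$; then one application of Theorem \ref{jointlowerupper} with $q_1=p_m+\eps<p<p_{m+1}=q_2$ finishes the proof, and no induction is needed.

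You have also glossed over the one genuinely nontrivial verification: that $(T^{p_1}\oplus\ldots\oplus T^{p_m})_{\ell_{p_m}}$ can be renormed so that it has a $1$-unconditional basis satisfying a lower $\ell_{p_m+\eps}$-estimate \emph{with constant $1$} for some $\eps$ with $p_m+\eps<p$. This is not a consequence of each $T^{p_i}$ being ``AUC coordinatewise'': the standard norm of $T^{p_i}$ does not satisfy a lower $\ell_{p_i}$-estimate, since the lower bound in its implicit equation applies only to admissible families $k\leq E_1<\ldots<E_k$. The paper obtains the claim from Proposition 5.6 of \cite{JLS} (the modified Tsirelson norm is $(p_j+\eps)$-concave on a tail $Q_N(T^{p_j})$), the shift isomorphism $T^{p}\cong Q_k(T^{p})$, and Proposition 1.d.8 of \cite{LT} to reduce the concavity constant to $1$; the lower estimate is exactly what feeds Lemma \ref{punfconlemma} inside the proof of Theorem \ref{jointlowerupper}, so it cannot be replaced by a qualitative convexity statement. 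Finally, your base case is also shakier than stated: Theorem \ref{corcorQuan} covers only $p<q$, and the regime $1\leq q<p\leq 2$ is not covered by the cotype argument of Remark \ref{cotypeT} either; it is handled precisely by the $Y_1$-half (midpoint/lower-estimate) of Theorem \ref{jointlowerupper}.
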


At last, we use Theorem \ref{notell2} in order to obtain the following characterization. 

\begin{thm}\label{aplicacaoporra}
Let $1<p_1<\ldots <p_n<\infty$ with $2\not\in\{p_1,\ldots ,p_n\}$. A Banach space $Y$ is coarsely equivalent (resp. uniformly homeomorphic) to $X=T^{p_1} \oplus\ldots \oplus T^{p_n}$ if and only if $Y$ is linearly isomorphic to $X\oplus \bigoplus_{j\in F}\ell_{p_j}$, for some $F\subset\{1,\ldots ,n\}$. 
\end{thm}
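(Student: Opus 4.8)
The plan is to prove Theorem~\ref{aplicacaoporra} by showing the two implications separately, with the reverse implication being the easy direction and the forward implication resting on Theorem~\ref{notell2} together with known structural facts about $\ell_p$-decompositions.

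\textbf{The reverse direction.} Suppose $Y$ is linearly isomorphic to $X\oplus\bigoplus_{j\in F}\ell_{p_j}$ for some $F\subseteq\{1,\dots,n\}$. Since $T^{p_j}$ contains isometric (in fact $1$-complemented, by the structure of the $p$-convexification and the subsequence-invariance of Tsirelson-type norms) copies of $\ell_{p_j}^k$ uniformly, and more relevantly since $T^{p_j}\oplus\ell_{p_j}$ is known to be linearly isomorphic to $T^{p_j}$ — this is the analogue for convexified Tsirelson of the classical fact used in \cite{JLS}, coming from the fact that $T^{p_j}$ has a subsymmetric-free but nonetheless ``$\ell_{p_j}$-absorbing'' basis after passing to an appropriate block decomposition (see \cite{JLS}, proof of Theorem~5.8) — we get that $X\oplus\bigoplus_{j\in F}\ell_{p_j}$ is linearly isomorphic to $X$ itself. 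Hence $Y$ is linearly isomorphic to $X$, and in particular $Y$ is Lipschitz, hence coarsely and uniformly, equivalent to $X$. I would state this absorption lemma explicitly and cite its source, since it is the only nontrivial input here.

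\textbf{The forward direction.} Assume $Y$ is coarsely equivalent (or uniformly homeomorphic) to $X=T^{p_1}\oplus\cdots\oplus T^{p_n}$. First, since $X$ is reflexive and asymptotically uniformly smoothable (each $T^{p_j}$ with $p_j>1$ is, being a $p_j$-convexification of a space with a good asymptotic structure; a finite $\ell_\infty$-sum of such is AUS-able after renorming), and the hypotheses give that $Y$ coarse Lipschitz embeds into $X$ and vice versa (by the remarks in the introduction following Problem~\ref{problem1}), the results of \cite{KR}/\cite{BKL} give that $Y$ is reflexive and AUS-able, with quantitative control: $Y$ coarse Lipschitz embeds into $X$ forces the ``asymptotic $\ell_p$-spreading model'' behaviour of $Y$ to be constrained to the set $\{p_1,\dots,p_n\}$. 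The core step is then to invoke the Kalton--Randrianarivony technique (approximate midpoints / the property $\mathcal{Q}$ and its consequences) exactly as in \cite{KR}, Theorem~5.4 and \cite{JLS}, Theorem~5.8: since $2\notin\{p_1,\dots,p_n\}$, each $T^{p_j}$ has a unique ``$\ell_{p_j}$ asymptotic direction'', these are mutually incomparable for distinct $p_j$, and a coarse Lipschitz embedding of $Y$ into $X$ together with one of $X$ into $Y$ pins down the Szlenk-type and convexity/smoothness indices of $Y$ in each direction. Concretely, one runs the argument of \cite{JLS} Theorem~5.8 with Theorem~\ref{notell2} of the present paper replacing the ad hoc non-embedding facts used there for the ranges $p_j<2$ and $p_j>2$: Theorem~\ref{notell2} says precisely that $\ell_p$ and $T^p$ do not coarse Lipschitz embed into $X$ when $p\notin\{p_1,\dots,p_n\}$, which is what rules out any ``foreign'' summand and forces $Y$ to decompose as $X\oplus\bigoplus_{j\in F}\ell_{p_j}$.

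\textbf{Assembling the decomposition.} Having located the possible asymptotic directions of $Y$, I would use reflexivity plus the Banach--Saks-type dichotomy from Section~\ref{sectionstableemb} (and the $\ell_p$-spreading model analysis) to produce a complemented decomposition $Y\cong X\oplus Z$ with $Z$ built from $\ell_{p_j}$'s: the embedding $Y\hookrightarrow X$ restricted to a spreading-model copy inside $Y$ must land asymptotically in one coordinate $T^{p_j}$, and the Kalton--Randrianarivony rigidity then upgrades this to a genuine complemented $\ell_{p_j}$-subspace; conversely the embedding $X\hookrightarrow Y$ shows every $T^{p_j}$ reappears in $Y$. Collecting which $\ell_{p_j}$'s actually occur gives the set $F$. \textbf{The main obstacle} I anticipate is the last upgrade — turning the asymptotic/coarse information (spreading models, Szlenk indices, approximate-midpoint estimates) into an honest \emph{linear} complemented-subspace statement; this is where one genuinely needs the special structure of convexified Tsirelson (its basis is ``$\ell_{p_j}$-saturated only asymptotically'' and the $\ell_{p_j}$-absorption property), and it is the step that fails when $2\in\{p_1,\dots,p_n\}$, explaining the hypothesis. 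I would handle it by citing and adapting the corresponding passage of \cite{JLS} essentially verbatim, substituting Theorem~\ref{notell2} at the one place where new non-embedding input is required.
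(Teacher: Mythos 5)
There is a genuine error in your reverse direction. You claim that $T^{p_j}\oplus\ell_{p_j}$ is \emph{linearly isomorphic} to $T^{p_j}$ and deduce that $Y\cong X\oplus\bigoplus_{j\in F}\ell_{p_j}$ is linearly isomorphic to $X$. This is false: as recorded in the paper (and going back to Johnson's result for $T$), $T^{p}$ contains no isomorphic copy of $\ell_r$ for any $r\in[1,\infty)$, so $T^{p_j}\oplus\ell_{p_j}$ cannot be isomorphic to $T^{p_j}$. The correct input here is purely nonlinear: by Proposition 5.7 of \cite{JLS}, $T^{p}$ is \emph{uniformly homeomorphic} (hence coarsely equivalent) to $T^{p}\oplus\ell_{p}$, and this is all one needs (and all one can get) for the backwards implication. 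Your proposed ``absorption lemma'' would, if true, also collapse the statement of the theorem to ``$Y$ coarsely equivalent to $X$ iff $Y\cong X$,'' which is not what is being proved.

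Your forward direction identifies the right high-level strategy --- run the argument of Theorem 5.8 of \cite{JLS}, with Theorem \ref{notell2} supplying the new non-embedding input that unifies the cases $p_j<2$ and $p_j>2$ --- and this matches the paper. However, you mislocate both the mechanism and the precise role of the hypotheses. The paper does not argue via spreading models, Szlenk indices, or an ``asymptotic direction'' analysis of $Y$; it passes to ultrapowers (Lemma \ref{lemmaref}) to get a Lipschitz equivalence $Y\oplus W\sim\bigoplus_j(T^{p_j}\oplus L_{p_j})$, uses the Heinrich--Mankiewicz theorem to linearize, and then the single place Theorem \ref{notell2} enters is to show that $Y$ (and then $Y_1^*$, via a type/cotype argument) contains no copy of $\ell_2$, so that Johnson's factorization theorem lets one replace each $L_{p_j}$ by $\ell_{p_j}$; the decomposition into the summands $\ell_{p_j}$ then comes from total incomparability and the Edelstein--Wojtaszczyk theorem, not from upgrading spreading-model information. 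This is also exactly why $2\notin\{p_1,\dots,p_n\}$ is needed: it is the hypothesis that makes $\ell_2$ ``foreign'' in the sense of Theorem \ref{notell2}. As written, your ``assembling the decomposition'' step is a placeholder for the hardest part of the proof rather than an argument, so beyond the concrete error in the easy direction, the forward direction is an outline that would still need the ultrapower/factorization machinery to be carried out.
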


Clearly, Theorem \ref{aplicacaoporra} is a strengthening of Theorem 5.8 of \cite{JLS} mentioned above. However, just as in the case for $\ell_{p_1}\oplus \ldots\oplus \ell_{p_n}$, we still do not know whether the theorem above holds if $2\in \{p_1,\ldots,p_n\}$.

\section{Notation and background.}\label{background}
\subsection{Basic definitions.}

All the Banach spaces in these notes are assumed to be infinite dimensional unless otherwise stated. Let $X$  be a Banach space. We denote the closed unit ball of $X$ by $B_X$, and its unit sphere by $\partial B_X$. If $Y$ is also a Banach space, we write $X\cong Y$ if $X$ is linearly isomorphic to $Y$. Given a Banach space $X$ with norm $\|\cdot\|_X$, we simply write $\|\cdot\|$ as long as it is clear from the context to which space  the elements inside the norm belong to. A sequence $(x_n)_{n=1}^\infty$ in a Banach space $X$ is called \emph{semi-normalized} if it is bounded and $\inf_n\|x_n\|>0$.

Say $(e_n)_{n=1}^\infty$ is a basis for the Banach space $X$. For $x=\sum_{n=1}^\infty x_ne_n\in X$, we write $\text{supp}(x)=\{n\in\N\mid x_n\neq 0\}$. For all finite subsets $E,F\subset \N$, we write $E<F$ (resp. $E\leq F$) if $\max E<\min F$ (resp. $\max E\leq\min F$). We call a sequence $(y_n)_{n=1}^\infty$ in $X$ a \emph{block sequence of $(e_n)_{n=1}^\infty$} if $\text{supp}(y_n)<\text{supp}(y_{n+1})$, for all $n\in\N$.

Let $(X_n)_{n=1}^\infty$ be a sequence of Banach spaces. Let $\mathcal{E}=(e_n)_{n=1}^\infty$ be a $1$-unconditional basic sequence in a Banach space $E$ with norm $\|\cdot\|_E$. We define the sum $(\oplus _n X_n)_{\mathcal{E}}$ to be the space of sequences $(x_n)_{n=1}^\infty$, where $x_n\in X_n$, for all $n\in\N$, such that 

$$\|(x_n)_{n=1}^\infty\|\vcentcolon =\Big\|\sum_{n\in\N}\|x_n\|e_n\Big\|_E<\infty.$$\hfill

\noindent One can check that $(\oplus _n X_n)_\mathcal{E}$ endowed with the norm $\|\cdot\|$ defined above is a Banach space.  If the $X_n$'s are all the same, say $X_n=X$, for all $n\in\N$, we write $(\oplus X)_\mathcal{E}$. Also, if it is implicit what is the basis  $\mathcal{E}$ of the Banach space $E$  that we are working with, we write $(\oplus_n X_n)_E$.

\subsection{$p$-convex and $p$-concave Banach spaces.}\label{subseclower} Let $X$ be a Banach space with $1$-unconditional basis $(e_n)_{n=1}^\infty$, and let $p\in (1,\infty)$. We say that the basis $(e_n)_{n=1}^\infty$ is \emph{$p$-convex with convexity  constant $C$} (resp. \emph{$p$-concave with concavity constant $C$}), if

$$\Big\|\sum_{j\in\N}(|x^1_j|^p+\ldots +|x^k_j|^p)^{1/p}e_j\Big\|^p\leq C^p \sum_{n=1}^k\|x^n\|^p,$$\hfill

$$\Big(\text{resp. }\ \ C^p\Big\|\sum_{j\in\N}(|x^1_j|^p+\ldots +|x^k_j|^p)^{1/p}e_j\Big\|^p\geq \sum_{n=1}^k\|x^n\|^p\Big),$$\hfill

\noindent for all $x^1=\sum_{j=1}^\infty x^1_je_j,\ldots ,x^k=\sum_{j=1}^\infty x^k_je_j\in X$.  We say that the basis $(e_n)_{n=1}^\infty$  satisfies an \emph{upper $\ell_p$-estimate  with constant $C$} (resp. \emph{lower $\ell_p$-estimate with constant $C$}), if 

$$\|x_1+\ldots+x_k\|^p\leq C^p \sum_{n=1}^k\|x_n\|^p\ \ \text{\Big(resp. }C^p\|x_1+\ldots+x_k\|^p\geq  \sum_{n=1}^k\|x_n\|^p\text{\Big)},$$\hfill

\noindent for all $x_1,\ldots,x_k\in X$ with disjoint supports. Clearly, a $p$-convex (resp. $p$-concave) basis with constant $C$ satisfies an upper (resp. lower) $\ell_p$-estimate with constant $C$.

\subsection{$p$-convexification.}

Let $X$ be a Banach space with a $1$-unconditional basis $(e_n)_{n=1}^\infty$. For any $p\in[1,\infty)$, we define the \emph{$p$-convexification of $X$} as follows. Let 

$$X^{p}=\Big\{(x_n)_{n=1}^\infty\in \R^\N\mid x^p\coloneqq \sum_{n\in\N}|x_n|^pe_n\in X\Big\},$$\hfill

\noindent and endow $X^p$ with the norm $\|x\|_{p}=\|x^p\|^{1/p}$, for all $x\in X^{p}$. By abuse of notation, we denote by $(e_n)_{n=1}^\infty$ the sequence of coordinate vectors in $X^{p}$. It is clear that $(e_n)_{n=1}^\infty$ is a $1$-unconditional basis for $X^{p}$ and that $X^1=X$. Also, the triangle inequality gives us that $X^{p}$ is $p$-convex with constant $1$.

\subsection{Asymptotically $p$-uniformly smooth and convex spaces.}

Let $X$  be a Banach space. We define the \emph{modulus of asymptotic uniform smoothness of $X$} as

$$\overline{\rho}_X(t)=\sup_{x\in \partial B_X}\inf_{\text{dim}(X/E)<\infty}\sup_{ h\in  \partial B_E}\|x+th\|-1.$$\hfill

\noindent We say that $X$ is \emph{asymptotically uniformly smooth} if $\lim_{t\to 0_+}\overline{\rho}_X(t)/t=0$. If there exists $p\in (1,\infty)$ and $C>0$ such that $\overline{\rho}_X(t)\leq Ct^p$, for all $t\in [0,1]$, we say that $X$ is \emph{asymptotically $p$-uniformly smooth}. Every  asymptotically uniformly smooth Banach space is asymptotically $p$-uniformly smooth for some $p\in (1,\infty)$ (this was first proved in \cite{KOS} for separable Banach spaces, and later generalized for any Banach space in \cite{Ra},  Theorem 1.2).

Let $X$  be a Banach space. We define the \emph{modulus of asymptotic uniform convexity of $X$} as

$$\overline{\delta}_X(t)=\inf_{x\in \partial B_X}\sup_{\text{dim}(X/E)<\infty}\inf_{ h\in \partial B_E}\|x+th\|-1.$$\hfill

\noindent We say that $X$ is \emph{asymptotically uniformly convex} if $\overline{\delta}_X(t)>0$, for all $t>0$. If there exists $p\in (1,\infty)$ and $C>0$ such that $\overline{\delta}_X(t)\geq Ct^p$, for all $t\in [0,1]$, we say that $X$ is \emph{asymptotically $p$-uniformly convex}. 

The following proposition is straight forward.

\begin{prop}\label{lemmadgj}
Let $p\in (1,\infty)$ and let $X$ be a Banach space with a $1$-unconditional basis satisfying an upper $\ell_p$-estimate (resp. lower $\ell_p$-estimate) with constant $1$. Then $X$ is asymptotically $p$-uniformly smooth (resp. asymptotically $p$-uniformly convex).
\end{prop}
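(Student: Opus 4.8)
The plan is to unwind the definitions and estimate the moduli directly from the $\ell_p$-estimates, using the fact that for a $1$-unconditional basis one can control $\|x+th\|$ whenever $x$ and $h$ are supported far apart.

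First I would treat the smoothness case. Fix $x \in \partial B_X$ and $\eps > 0$. Since the basis is unconditional, truncating $x$ to its first $N$ coordinates gives an element $x_N$ with $\|x - x_N\| < \eps$ for $N$ large, and the finite-codimensional subspace $E = \overline{\mathrm{span}}\{e_n : n > N\}$ has $\mathrm{dim}(X/E) \le N < \infty$. For any $h \in \partial B_E$, the vectors $x_N$ and $h$ have disjoint supports, so the upper $\ell_p$-estimate with constant $1$ gives $\|x_N + th\|^p \le \|x_N\|^p + t^p\|h\|^p \le 1 + t^p$, hence $\|x_N + th\| \le (1+t^p)^{1/p}$ and $\|x + th\| \le (1+t^p)^{1/p} + \eps$. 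Taking the sup over $h \in \partial B_E$, then the inf over finite-codimensional $E$, then the sup over $x \in \partial B_X$, and letting $\eps \to 0$, yields $\overline{\rho}_X(t) \le (1+t^p)^{1/p} - 1$. Finally the elementary inequality $(1+t^p)^{1/p} - 1 \le \tfrac1p t^p$ (which follows from concavity of $s \mapsto (1+s)^{1/p}$, or just from the binomial series) gives $\overline{\rho}_X(t) \le \tfrac1p t^p$ for all $t \in [0,1]$, so $X$ is asymptotically $p$-uniformly smooth with $C = 1/p$.

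For the convexity case the argument is dual but slightly more delicate because of the $\sup$ over finite-codimensional $E$ in the definition of $\overline{\delta}_X$. Fix $x \in \partial B_X$ and $\eps > 0$, and again pick $N$ with $\|x - x_N\| < \eps$ and set $E = \overline{\mathrm{span}}\{e_n : n > N\}$. For any $h \in \partial B_E$, disjointness of supports and the lower $\ell_p$-estimate with constant $1$ give $\|x_N + th\|^p \ge \|x_N\|^p + t^p \ge (1-\eps)^p + t^p$, so $\|x + th\| \ge \|x_N + th\| - \eps \ge ((1-\eps)^p + t^p)^{1/p} - \eps$. Taking the inf over $h \in \partial B_E$, then the sup over $E$ (a fortiori $\ge$ the value for this particular $E$), then the inf over $x$, and letting $\eps \to 0$ gives $\overline{\delta}_X(t) \ge (1 + t^p)^{1/p} - 1$. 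Then $(1+t^p)^{1/p} - 1 \ge C t^p$ for $t \in [0,1]$ with, say, $C = 2^{1/p} - 1$ (since $(1+s)^{1/p} - 1$ is concave in $s$ on $[0,1]$ and equals $2^{1/p}-1$ at $s=1$, it lies above the chord $(2^{1/p}-1)s$), so $X$ is asymptotically $p$-uniformly convex.

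The one genuine subtlety — not really an obstacle, but the point to be careful about — is the order of the quantifiers in the two moduli: smoothness has an $\inf$ over finite-codimensional $E$ (so exhibiting one good $E$ suffices, which is exactly what the truncation provides), while convexity has a $\sup$ over $E$ (so again exhibiting one good $E$ suffices for a lower bound). Thus in both cases the tail subspace $E$ of the basis is the right witness, and the only care needed is the uniform $\eps$-approximation of $x$ by a finitely supported vector, valid since the basis is a Schauder basis with basis constant $1$ (by $1$-unconditionality). Everything else is the two scalar inequalities relating $(1+t^p)^{1/p}-1$ to $t^p$ on $[0,1]$, which I would state without detailed proof.
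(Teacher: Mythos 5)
Your proof is correct, and it is precisely the routine argument the paper has in mind when it declares the proposition ``straight forward'' and omits the proof: truncate $x$, use the tail subspace $E=\overline{\mathrm{span}}\{e_n : n>N\}$ as the witness for the $\inf$ (resp.\ $\sup$) over finite-codimensional subspaces, and apply the disjoint-support $\ell_p$-estimate together with the elementary comparison of $(1+t^p)^{1/p}-1$ with $t^p$ on $[0,1]$. The only implicit ingredient worth flagging is that $\|x_N\|\le\|x\|$, which holds because $1$-unconditionality makes the coordinate projections contractive; you note this, so nothing is missing.
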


\subsection{Banach-Saks properties.}

A Banach space $X$ is said to have the \emph{Banach-Saks property} if every bounded sequence $(x_n)_{n=1}^\infty$ in $X$ has a subsequence $(x_{n_j})_{j=1}^\infty$ such that $(\frac{1}{k}\sum_{j=1}^k x_{n_j})_{k=1}^\infty$ converges. A Banach space $X$ is said to have the \emph{alternating Banach-Saks property} if every bounded sequence $(x_n)_{n=1}^\infty$ in $X$ has a subsequence $(x_{n_j})_{j=1}^\infty$ such that $(\frac{1}{k}\sum_{j=1}^k \eps_jx_{n_j})_{k=1}^\infty$ converges, for some $(\eps_j)_{j=1}^\infty\in \{-1,1\}^\N$. For a detailed study of this properties, we refer to \cite{Be}.

Let $p\in (1,\infty)$. A Banach space $X$ is said to have the \emph{$p$-Banach-Saks property} (resp. \emph{$p$-co-Banach-Saks property}), if for every semi-normalized weakly null sequence $(x_n)_{n=1}^\infty$ in $X$, there exists a subsequence $(x_{n_j})_{j=1}^\infty$ and $c>0$ such that

$$\|x_{n_1}+\ldots+x_{n_k}\|\leq ck^{1/p}\ \ \text{ (resp. }\ \ \|x_{n_1}+\ldots+x_{n_k}\|\geq ck^{1/p}\text{),}$$\hfill

\noindent for all $k\in\N$, and all $ k\leq n_1<\ldots<n_k$. 

% Also, it is worth noticing that, by Galvin-Prikry's theorem (see \cite{T}), the condition $k\leq n_1$ above can be omitted.

The following is a combination of  Proposition 1.2, Proposition 1.3, and Proposition 1.6 of \cite{DGJ} (Proposition 1.6 of \cite{DGJ} only mentions the $p$-Banach-Saks property, but a straight forward modification of their proof gives us the result for the $p$-co-Banach-Saks property).

\begin{prop}\label{lemmakalton}
Let $p\in (1,\infty)$ and let $X$ be a Banach space. If $X$ asymptotically $p$-uniformly smooth (resp. asymptotically $p$-uniformly convex), then $X$ has the $p$-Banach-Saks property (resp. $p$-co-Banach-Saks property)
\end{prop}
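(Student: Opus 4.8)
\textbf{Proof proposal for Proposition \ref{lemmakalton}.}

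The plan is to reduce the statement to the corresponding combinatorial estimate for block-type sequences, exactly as in the argument of \cite{DGJ}. I would first fix $p\in(1,\infty)$ and a Banach space $X$ which is asymptotically $p$-uniformly smooth, so that $\overline{\rho}_X(t)\leq Ct^p$ for all $t\in[0,1]$ and some $C>0$. The key device is the standard observation that the modulus $\overline{\rho}_X$ controls how much a norm can grow along a weakly null sequence: if $x\in\partial B_X$ and $(h_n)_{n=1}^\infty$ is a weakly null sequence with $\limsup_n\|h_n\|\leq t$, then for every $\eps>0$ one has $\|x+h_n\|\leq 1+\overline{\rho}_X(t)+\eps$ for all sufficiently large $n$, since finite-codimensional subspaces ``catch'' the tail of a weakly null sequence. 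Iterating this along a semi-normalized weakly null sequence $(x_n)_{n=1}^\infty$ (after passing to a subsequence so that the relevant norms and ``gluings'' stabilize, using a diagonal argument), one obtains a constant $c>0$ such that $\|x_{n_1}+\ldots+x_{n_k}\|\leq ck^{1/p}$ for all $k$ and all $k\leq n_1<\ldots<n_k$; this is precisely how Proposition 1.6 of \cite{DGJ} is proved, combined with their Propositions 1.2 and 1.3 which handle the passage between the asymptotic smoothness modulus and upper tree estimates. Hence $X$ has the $p$-Banach-Saks property.

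For the convex case, I would run the dual argument. If $X$ is asymptotically $p$-uniformly convex with $\overline{\delta}_X(t)\geq Ct^p$ on $[0,1]$, then for $x\in\partial B_X$ and a weakly null sequence $(h_n)$ with $\liminf_n\|h_n\|\geq t$ one gets, along a subsequence, a \emph{lower} bound $\|x+h_n\|\geq 1+\overline{\delta}_X(t)-\eps$; indeed, by definition of $\overline{\delta}_X$ there is a finite-codimensional $E$ on which $\inf_{h\in\partial B_E}\|x+th\|\geq 1+\overline{\delta}_X(t)$, and the tail of $(h_n)$ lands essentially inside $E$. Iterating this lower estimate along a semi-normalized weakly null sequence yields a constant $c>0$ with $\|x_{n_1}+\ldots+x_{n_k}\|\geq ck^{1/p}$ for all admissible choices, i.e., the $p$-co-Banach-Saks property; this is the ``straightforward modification'' of the proof of Proposition 1.6 of \cite{DGJ} alluded to in the statement, the only change being the direction of all the inequalities.

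The step I expect to be the main obstacle — or at least the only place requiring genuine care — is the iteration/subsequence extraction that turns the one-step estimate ``$\|x+h\|\leq 1+\overline{\rho}_X(\|h\|)+\eps$ when $h$ is far out in a weakly null direction'' into the uniform $k$-term bound $ck^{1/p}$ with a single constant $c$ independent of $k$. This requires a diagonalization that simultaneously stabilizes the norms $\|x_{n_j}\|$, controls the error terms $\eps$ so that they sum to something finite, and ensures that at each stage the partial sum $x_{n_1}+\ldots+x_{n_j}$ plays the role of the ``unit vector'' $x$ while $x_{n_{j+1}}$ plays the role of the weakly null perturbation $h$ — the constraint $k\leq n_1$ is exactly what is needed to make this diagonal scheme work. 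Once this bookkeeping is set up, the estimate $\sum_{j=1}^k\overline{\rho}_X(c'/j^{1/p})\lesssim\sum_j j^{-1}\cdot(\text{something summable})$ closes the induction, and similarly for the convex side with $\overline{\delta}_X$. Since all of this is carried out in \cite{DGJ}, I would simply cite their Propositions 1.2, 1.3, and 1.6 and remark that reversing the inequalities throughout gives the co-Banach-Saks statement.
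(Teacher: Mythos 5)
Your proposal is correct and follows exactly the route the paper takes: the paper offers no independent argument, but simply combines Propositions 1.2, 1.3, and 1.6 of \cite{DGJ} and remarks that a straightforward modification (reversing the inequalities) yields the $p$-co-Banach-Saks case from asymptotic $p$-uniform convexity. Your expanded sketch of how the modulus controls one-step norm growth and how the iteration/diagonalization closes is a faithful account of the cited argument, so nothing further is needed.
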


\subsection{Tsirelson and Schlumprecht spaces.}\label{submixtsi}

Let $c_{00}$ denote the set of sequences of real numbers which are eventually zero, and let $\|\cdot\|_0$ be the max norm on $c_{00}$. We denote by $T$ the \emph{Tsirelson space} defined in \cite{FJ}, i.e.,  $T$ is the  completion of $c_{00}$ under the unique norm $\|\cdot\|$ satisfying

$$\|x\|=\max\Big\{ \|x\|_0, \frac{1}{2}\cdot \sup\Big( \sum_{j=1}^k\|E_j x\|\Big)\Big\},$$ \hfill

\noindent where the inner supremum above is taken over all finite sequences $(E_j)_{j=1}^k$ of finite subsets of $\N$ such that $k\leq E_1< \ldots<E_k$. Therefore, for each $p\in(1,\infty)$, the norm $\|\cdot\|_p$ of the  $p$-convexified Tsirelson space $T^p$ satisfies

$$\|x\|_{p}=\max\Big\{\|x\|_0,\frac{1}{2^{1/p}}\cdot\sup\Big(\sum_{j=1}^k\|E_jx\|_{p}^p\Big)^{1/p}\Big\},$$\hfill

\noindent where the inner supremum above is taken over all finite sequences $(E_j)_{j=1}^k$ of finite subsets of $\N$ such that $k\leq E_1< \ldots<E_k$ (see \cite{CS}, Chapter X, Section E).

As $T^p$ satisfies an upper $\ell_p$-estimate with constant $1$, it follows that $T^p$ is asymptotically $p$-uniformly smooth and it has the $p$-Banach-Saks property.  Also, $T^{p}$ has the $p$-co-Banach-Saks property. Indeed, let $(e_n)_{n=1}^\infty$ be the standard basis for $T^{p}$. If $(x_n)_{n=1}^\infty$ is a normalized block subsequence of $(e_n)_{n=1}^\infty$, then

$$2^{-1/p}k^{1/p}=2^{-1/p}\Big(\sum_{n=k}^{2k-1}\|x_n\|_p^p\Big)^{1/p}\leq \Big\|\sum_{n=k}^{2k-1} x_n\Big\|_p,$$\hfill

\noindent for all $k\in\N$. Therefore, as for any normalized weakly null sequence $(x_n)_{n=1}^\infty$ in $T^{p}$, one can find a block sequence $(y_n)_{n=1}^\infty$ which is equivalent to a subsequence of $(x_n)_{n=1}^\infty$, we conclude that $T^{p}$ has the $p$-co-Banach-Saks.

\begin{remark}
Let $p\in (1,\infty)$. Then $T^p$  does not contain $\ell_r$ for any $r\in [1,\infty)$ (this is shown in \cite{Jo2} for $T$, and the result for $T^p$ follows analogously). Similarly, by duality arguments, ${T^p}^*$ does not contain $\ell_r$ for any $r\in [1,\infty)$ (the reader can find more on $T^p$ and similar duality arguments in \cite{CS}).
\end{remark}

The \emph{Schlumprecht space} $S$ (see \cite{S}) is defined as the  completion of $c_{00}$ under the unique norm $\|\cdot\|$ satisfying

$$\|x\|=\max\Big\{\|x\|_0,\sup \Big(\frac{1}{\log_2(k+1)}\sum_{j=1}^k\|E_jx\|\Big)\Big\},$$\hfill

\noindent where the inner supremum above is taken over all finite sequences $(E_j)_{j=1}^k$ of finite subsets of $\N$ such that $E_1< \ldots<E_k$. Similarly as with the $p$-convexified Tsirelson space, the norm $\|\cdot\|_p$  of the $p$-convexified Schlumprecht space $S^p$ is given by 

$$\|x\|_{p}=\max\Big\{\|x\|_0,\sup \Big(\frac{1}{\log_2(k+1)}\sum_{j=1}^k\|E_jx\|_{p}^p\Big)^{1/p}\Big\},$$\hfill

\noindent where the inner supremum above is taken over all finite sequences $(E_j)_{j=1}^k$ of finite subsets of $\N$ such that $E_1< \ldots<E_k$ (see \cite{D}, page 59).

Similarly to $T^p$,  $S^{p}$ is asymptotically $p$-uniformly smooth and has the $p$-Banach-Saks property, for $p\in (1,\infty)$.

\subsection{Almost $p$-co-Banach-Saks property.}

Although $T^{p}$ has the $p$-co-Banach-Saks property, $S^{p}$ does not. However, $S^{p}$ satisfies a weaker property that will be enough for our goals.  Let $p\in (1,\infty)$. We say that a Banach space $X$ has the \emph{almost $p$-co-Banach-Saks property} if for every semi-normalized weakly null sequence $(x_n)_{n=1}^\infty$ in $X$ there exists a subsequence $(x_{n_j})_{j=1}^\infty$, and a sequence of positive numbers $(\theta_j)_{j=1}^\infty$ in $[1,\infty)$ such that $\lim_{j\to \infty}j^\alpha\theta^{-1}_j=\infty$, for all $\alpha>0$, and 

$$ \|x_{n_1}+\ldots +x_{n_k}\|\geq k^{1/p}\theta^{-1}_k,$$\hfill

\noindent for all $k\in\N$, and all $ k\leq n_1<\ldots <n_k$. Clearly, $S^{p}$ has the almost  $p$-co-Banach-Saks property, with $\theta_k=\log_2(k+1)^{1/p}$, for all $k\in\N$.

\section{Asymptotic uniform smoothness and the alternating Banach-Saks property.}\label{sectionstableemb}

In this section, we are going to show that asymptotically uniformly smooth Banach spaces must have the alternating Banach-Saks property (Corollary \ref{asympbansaks}), but the converse does not hold (see Proposition \ref{8978}). Also, we show that if a Banach space $X$ coarse Lipschitz embeds into a reflexive space $Y$ which is also asymptotically uniformly smooth, then $X$ must have the Banach-Saks property (Theorem \ref{asympbanachsaksref}). As any space with the Banach-Saks property is reflexive, this is a strengthening of Theorem 4.1 of \cite{BKL}, which says that, under the same hypothesis, $X$ must be reflexive.

\begin{prop}\label{pbansaks}
Let $X$ be a Banach space with the $p$-Banach-Saks property, for some $p\in (1,\infty)$, and assume that $X$ does not contain $\ell_1$. Then $X$ has the alternating Banach-Saks property. In particular, if $X$ is also reflexive, then $X$ has the Banach-Saks property.
\end{prop}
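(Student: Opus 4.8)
The plan is to reduce the alternating Banach-Saks property to a statement about semi-normalized weakly null sequences, exploiting the $p$-Banach-Saks hypothesis, and to handle the passage from bounded sequences to weakly null ones using the fact that $X$ does not contain $\ell_1$. Let $(x_n)_{n=1}^\infty$ be a bounded sequence in $X$. By Rosenthal's $\ell_1$-theorem, since $X$ does not contain $\ell_1$, we may pass to a subsequence (still denoted $(x_n)$) which is weakly Cauchy. Writing $y_n = x_{2n}-x_{2n-1}$, we obtain a bounded weakly null sequence. If $(y_n)$ has a norm-null subsequence, then along that subsequence the Cesàro averages $\frac1k\sum_{j=1}^k \varepsilon_j y_{n_j}$ converge to $0$ for any choice of signs, and one checks this transfers back to give convergence of the alternating averages of the original $(x_n)$ (the averages telescope appropriately); so assume $(y_n)$ is semi-normalized after passing to a further subsequence.

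Now apply the $p$-Banach-Saks property to the semi-normalized weakly null sequence $(y_n)$: there is a subsequence $(y_{n_j})_{j=1}^\infty$ and $c>0$ with $\|y_{n_1}+\ldots+y_{n_k}\| \leq ck^{1/p}$ whenever $k \leq n_1 < \ldots < n_k$. The key point is that the same estimate holds for \emph{every} sign choice $(\varepsilon_j)$, because $X$ has an unconditional-free formulation here — actually one must be slightly careful, as the $p$-Banach-Saks estimate is not a priori sign-invariant. The clean route is: from the weakly null sequence $(y_n)$, extract a basic subsequence (possible since $(y_n)$ is weakly null and semi-normalized), and observe that the alternating sums over any $\pm$ pattern of a weakly null sequence can be rewritten so that the $p$-Banach-Saks bound still applies to blocks — more precisely, one applies the $p$-Banach-Saks property directly to the sequence $(\varepsilon_n y_n)$, which is still semi-normalized and weakly null, for a \emph{fixed} sign sequence, but we need a single subsequence working for all sign choices simultaneously. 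This is achieved by a diagonal argument over sign patterns restricted to initial segments, combined with stabilization.

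Given such a subsequence with $\|\sum_{j=1}^k \varepsilon_j y_{n_j}\| \leq c k^{1/p}$ for all sign choices and all $k \leq n_1 < \ldots < n_k$, we get $\frac1k\|\sum_{j=1}^k \varepsilon_j y_{n_j}\| \leq c k^{1/p - 1} \to 0$ since $p > 1$. Hence the alternating Cesàro averages of $(y_{n_j})$ converge to $0$. Translating back through $y_n = x_{2n} - x_{2n-1}$: grouping the indices and choosing the signs of the $x$'s to match those prescribed for the $y$'s (with opposite signs on the paired terms), plus controlling the telescoping remainder coming from the weak-Cauchy part via the same $p^{1/p}$-growth bound, yields a subsequence of $(x_n)$ and signs $(\varepsilon_j)$ with convergent alternating averages. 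This establishes the alternating Banach-Saks property. The final sentence is then immediate: a reflexive space with the alternating Banach-Saks property has the Banach-Saks property, since in a reflexive space every bounded sequence has a weakly convergent subsequence, and subtracting the weak limit reduces to the weakly null case where the alternating averages must then actually be the ordinary averages (the signs can be absorbed because one can further pass to a subsequence on which a fixed sign pattern works, and weakly null Cesàro averages converge to $0$ regardless of signs when the alternating averages do) — alternatively, one cites the standard fact that for reflexive spaces the two properties coincide, as noted in \cite{Be}.

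\textbf{Main obstacle.} The delicate step is arranging a \emph{single} subsequence of $(y_n)$ for which the $p$-Banach-Saks-type estimate holds uniformly over all sign sequences $(\varepsilon_j) \in \{-1,1\}^{\N}$, since the $p$-Banach-Saks property as stated is not manifestly sign-stable. I expect this to be handled by a Ramsey/diagonal extraction: at stage $k$ one only needs the estimate for the $2^k$ sign patterns on $\{1,\ldots,k\}$, each obtained by applying $p$-Banach-Saks to the weakly null semi-normalized sequence $(\varepsilon_n y_n)_{n \geq N_k}$ for the appropriate tail, and then one diagonalizes. Making the constants uniform (a single $c$ works, after shrinking) and verifying the bookkeeping in the translation back to $(x_n)$ — in particular that the weak-Cauchy "drift" term does not spoil the $o(k)$ decay of the averages — is the part that requires genuine care rather than routine computation.
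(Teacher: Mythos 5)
Your core computation is the right one and coincides with the paper's: pass to a weakly Cauchy subsequence via Rosenthal's $\ell_1$-theorem, form the weakly null difference sequence, apply the $p$-Banach-Saks estimate, and use $k^{1/p-1}\to 0$. But you have manufactured, and then left unresolved, an obstacle that does not exist. The alternating Banach-Saks property asks for a subsequence and \emph{some} sign sequence $(\varepsilon_j)$ for which the averages $\frac1k\sum_{j=1}^k\varepsilon_j x_{n_j}$ converge; you get to choose the signs. Choosing the alternating pattern $+,-,+,-,\dots$ matched to your pairing, the signed sum of $2k$ terms of $(x_n)$ is \emph{exactly} the unsigned sum $\sum_{j=1}^k y_j$ of the differences, so the $p$-Banach-Saks bound applies verbatim, and no sign-uniform estimate, Ramsey argument, or diagonalization over sign patterns is needed. (There is also no ``telescoping remainder coming from the weak-Cauchy part'': the only leftover is the single unpaired term in an odd-length partial sum, which contributes $O(1/k)$.) As written, your proof is incomplete precisely at the step you flag as the main difficulty, and the correct fix is to delete that step rather than to carry it out.

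The paper organizes the argument by contradiction, which is slightly cleaner than your direct route: if $X$ fails the alternating Banach-Saks property, then by Theorem 1 of \cite{Be} there are $\delta>0$ and a bounded sequence $(x_n)$ with $\|\frac1k\sum_{j=1}^k\varepsilon_jx_{n_j}\|>\delta$ for \emph{all} $k$, all signs, and all indices. This uniform lower bound makes the difference sequence $(x_{2n-1}-x_{2n})$ automatically semi-normalized (eliminating your case split on whether $(y_n)$ has a norm-null subsequence), and it is contradicted by the single inequality $\delta<\frac{c}{2}k^{1/p-1}$. Your closing paragraph on the reflexive case is also shakier than it needs to be: the equivalence of the Banach-Saks and alternating Banach-Saks properties for reflexive spaces is not obtained by ``subtracting the weak limit and absorbing signs''; just cite Proposition 2 of \cite{Be}, as the paper does.
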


\begin{proof}
Assume $X$ does not have the alternating Banach-Saks property. Then,  there exist $\delta>0$ and a bounded sequence $(x_n)_{n=1}^\infty$ in $X$ such that, for all $k\in\N$, all $\eps_1,\ldots ,\eps_k\in\{-1,1\}$, and all $n_1<\ldots <n_k\in\N$, we have

\begin{align}\label{delta}
\Big\|\frac{1}{k}\sum_{j=1}^k\eps_jx_{n_j}\Big\|>\delta
\end{align}\hfill

\noindent (see \cite{Be}, Theorem 1, page 369). As $X$ does not contain $\ell_1$, by Rosenthal's $\ell_1$-theorem (see \cite{R2}), we can assume that $(x_n)_{n=1}^\infty$ is weakly Cauchy. Hence,  the sequence $(x_{2n-1}-x_{2n})_{n=1}^\infty$ is weakly null. By  Equation (\ref{delta}),  it is also semi-normalized. Therefore, as $X$ has the $p$-Banach-Saks property, by taking a subsequence if necessary, we have that

$$\Big\|\sum_{j=1}^{k}(x_{n_{2j-1}}-x_{n_{2j}})\Big\|\leq ck^{1/p},$$\hfill

\noindent for all $k\in\N$, and some constant $c>0$ independent of $k$. By Equation (\ref{delta}), we get that

$$\delta<\Big\|\frac{1}{2k}\sum_{j=1}^{2k}(-1)^{j+1}x_{n_j}\Big\|\leq \frac{c}{2}k^{{1/p}-1}.$$\hfill

\noindent  As this holds for all $k\in\N$, and $p>1$, if we let $k\to \infty$, we get  that $\delta=0$, which is a contradiction.

For reflexive spaces, the alternating Banach-Saks property and the Banach-Saks property are equivalent (see \cite{Be}, Proposition 2), so the last statement of the proposition  follows.
\end{proof}

\begin{cor}\label{asympbansaks}
Let $X$ be an asymptotically uniformly smooth Banach space. Then $X$ has the alternating Banach-Saks property. In particular, if $X$ is also reflexive, then $X$ has the Banach-Saks property.
\end{cor}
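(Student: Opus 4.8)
The plan is to deduce this immediately from Proposition \ref{pbansaks} by verifying its two hypotheses for an asymptotically uniformly smooth space $X$. First, by the structure theorem quoted after the definition of asymptotic uniform smoothness (\cite{KOS}, \cite{Ra}), every asymptotically uniformly smooth Banach space is asymptotically $p$-uniformly smooth for some $p\in(1,\infty)$; then Proposition \ref{lemmakalton} gives that $X$ has the $p$-Banach-Saks property. So the first hypothesis of Proposition \ref{pbansaks} holds.

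Second, I need to check that $X$ does not contain $\ell_1$. The cleanest way is to observe that $\ell_1$ is \emph{not} asymptotically $p$-uniformly smooth for any $p\in(1,\infty)$, and that this property passes to subspaces: if $Z\subset X$ is a closed subspace then $\overline{\rho}_Z(t)\le\overline{\rho}_X(t)$ (indeed, for a finite-codimensional $E\subset X$, the intersection $E\cap Z$ is finite-codimensional in $Z$, so the infimum defining $\overline{\rho}_Z$ is over a larger family and every competitor for $X$ restricts to one for $Z$). For $\ell_1$ itself one computes directly: taking $x=e_1$ and any finite-codimensional $E$, the unit ball of $E$ contains a vector $h$ essentially disjoint from $e_1$, so $\|e_1+th\|\approx 1+t$, giving $\overline{\rho}_{\ell_1}(t)=t$, which is not $O(t^p)$ for $p>1$. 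Hence $\ell_1$ does not embed into $X$. (Alternatively, one can invoke that asymptotically uniformly smooth spaces are reflexive after renorming, or cite that such spaces have nontrivial asymptotic smoothness incompatible with containing $\ell_1$; but the direct modulus computation is self-contained.)

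With both hypotheses of Proposition \ref{pbansaks} verified, that proposition yields that $X$ has the alternating Banach-Saks property. For the ``in particular'' clause: if $X$ is moreover reflexive, then by \cite{Be}, Proposition 2, the alternating Banach-Saks property and the Banach-Saks property coincide for reflexive spaces, so $X$ has the Banach-Saks property. This completes the argument.

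The only step requiring genuine care is the claim that $\ell_1$ does not embed into $X$ — everything else is a direct appeal to cited results. I expect the cleanest route is the hereditarity of asymptotic $p$-uniform smoothness together with the explicit modulus of $\ell_1$, rather than routing through reflexivity; but since asymptotically uniformly smooth spaces that are separable are automatically reflexive-renormable while the general statement needs the modulus argument, I would state the modulus computation explicitly to keep the corollary valid without a separability assumption.
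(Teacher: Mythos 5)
Your overall route is exactly the paper's: invoke Raja's theorem to get asymptotic $p$-uniform smoothness for some $p\in(1,\infty)$, apply Proposition \ref{lemmakalton} to get the $p$-Banach-Saks property, check that $X$ does not contain $\ell_1$, and conclude via Proposition \ref{pbansaks} (the reflexive case being \cite{Be}, Proposition 2). The paper simply asserts the $\ell_1$ non-containment without proof, whereas you try to justify it, and it is precisely there — the step you yourself flag as the only delicate one — that your argument has a gap.

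The hereditarity claim $\overline{\rho}_Z(t)\le\overline{\rho}_X(t)$ for closed subspaces $Z\subset X$ is correct, but it gives you that a subspace of $X$ isomorphic to $\ell_1$ is AUS \emph{in its inherited norm}, which is only an \emph{equivalent} norm on $\ell_1$, not the canonical one. Your modulus computation $\overline{\rho}_{\ell_1}(t)=t$ is carried out for the canonical norm, and asymptotic uniform smoothness is not an isomorphic invariant, so you cannot conclude. Indeed, if $T:\ell_1\to Z\subset X$ satisfies $a\|\xi\|_1\le\|T\xi\|\le b\|\xi\|_1$, repeating your computation with far-out normalized vectors only yields $\|x+th\|\ge \frac{a}{b}(1+t)$, hence $\overline{\rho}_X(t)\ge \frac{a}{b}(1+t)-1$, which is useless for small $t$ when $a/b<1$. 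The standard ways to close the gap are: (a) James's non-distortion theorem for $\ell_1$, which lets you take $b/a\le 1+\eps$ for every $\eps>0$ and thereby force $\overline{\rho}_X(t)\ge t$; or (b) the fact that AUS spaces are Asplund (e.g.\ separable AUS spaces have separable dual, by Knaust--Odell--Schlumprecht), while $\ell_1$ is not. Your parenthetical fallback — that asymptotically uniformly smooth spaces are ``reflexive after renorming'' — is false: $c_0$ is asymptotically $q$-uniformly smooth for every $q$ and admits no reflexive renorming (the paper itself uses $c_0$ as exactly such an example after Theorem \ref{upperpartQuan}). So that alternative cannot be used either; everything else in your write-up is fine.
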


\begin{proof}
As $X$ is asymptotically uniformly smooth, $X$ cannot contain $\ell_1$. Therefore, we only need to notice that $X$ has the $p$-Banach-Saks property, for some $p\in(1,\infty)$, and apply Proposition \ref{pbansaks}. By Theorem 1.2 of \cite{Ra}, $X$ is asymptotically $p$-uniformly smooth, for some $p\in(1,\infty)$. Therefore, by Proposition \ref{lemmakalton} above, we have that $X$ has the $p$-Banach-Saks property, so we are done.
\end{proof}

For each $k\in \N$ and each infinite subset $\M\subset \N$,  we define $G_k(\M)$ as the set of all subsets of $\M$ with $k$ elements. We write $\bar{n}=(n_1,\ldots ,n_k)\in G_k(\M)$ always in an increasing order, i.e., $n_1<\ldots <n_k$. We define a metric $d=d_k$  on $G_k(\M)$ by letting

$$d(\overline{n},\overline{m})=|\{j\mid n_j\neq m_j\}|,$$\hfill

\noindent for all $\overline{n}=(n_1,\ldots ,n_k),\overline{m}=(m_1,\ldots ,m_k)\in G_k(\M)$. 

The following will play an important role in many of the results in this paper. This result was proved in \cite{KR}, Theorem 4.2 (see also Theorem 6.1 of \cite{KR}).

\begin{thm}\label{KR4.2}
Let $p\in (1,\infty)$, and let $Y$ be a reflexive asymptotically $p$-uniformly smooth Banach space. There exists $K>0$ such that, for all infinite subset $\M\subset \N$, all $k\in\N$, and  all  bounded map $f: G_k(\M)\to Y$, there exists an infinite subset $\M'\subset \M$ such that 

$$\text{diam}(f(G_k(\M')))\leq K\text{Lip}(f) k^{1/p}.$$\hfill
\end{thm}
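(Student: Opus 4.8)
The plan is to adapt the Kalton--Randrianarivony argument: represent $f(\bar n)$ as an iterated weak limit plus a telescoping sum of weakly null ``increments,'' and then bound that sum by invoking asymptotic $p$-uniform smoothness once at each of the $k$ levels, amalgamating the resulting sub-selections via Ramsey's theorem.

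\smallskip

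\textbf{Step 0: reductions and a one-step inequality.} First I would pass to a separable subspace: the values of $f$ span a separable closed subspace $Y_0\subseteq Y$, which is again reflexive and satisfies $\overline\rho_{Y_0}(t)\le Ct^p$, and diameters are unchanged, so one may assume $Y$ separable. The analytic engine, extracted from $\overline\rho_Y(t)\le \widetilde Ct^p$ (for $t\in[0,1]$) together with reflexivity, is the following \emph{one-step inequality}: there is a constant $C=C(Y,p)$ such that for every $y\in Y$ and every weakly null sequence $(y_n)$ in $Y$,
\[
\limsup_{n}\,\|y+y_n\|\ \le\ \Big(\|y\|^p+C^p\big(\limsup_n\|y_n\|\big)^p\Big)^{1/p}.
\]
This is the standard computation: choosing a finite-codimension (hence weakly closed, so of the form $\bigcap_{i\le m}\ker\phi_i$) subspace $E$ nearly witnessing the infimum in $\overline\rho_Y$ at $x=y/\|y\|$, weak nullness forces $\operatorname{dist}(y_n,E)\to 0$, and the modulus estimate controls $\|y+y_n\|$ when $\limsup\|y_n\|\le\|y\|$, while the triangle inequality suffices (after enlarging $C$) when $\limsup\|y_n\|\ge\|y\|$.

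\smallskip

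\textbf{Step 1: stabilisation.} Since $Y$ is separable and reflexive, weak convergence of bounded sequences is metrizable, so a diagonal argument lets me pass to an infinite $\M_1\subseteq\M$ along which all iterated weak limits
\[
f_j(n_1,\dots,n_j):=\operatorname{w-lim}_{n_{j+1}}\cdots\operatorname{w-lim}_{n_k}f(n_1,\dots,n_k)\qquad(0\le j\le k)
\]
exist for every $(n_1,\dots,n_j)\in G_j(\M_1)$. By weak lower semicontinuity of the norm, each $f_j$ is $L$-Lipschitz on $G_j(\M_1)$, where $L:=\operatorname{Lip}(f)$. Put $y_\emptyset:=f_0$ and, for $1\le j\le k$, $z_j(n_1,\dots,n_j):=f_j(n_1,\dots,n_j)-f_{j-1}(n_1,\dots,n_{j-1})$, so that $f(\bar n)-y_\emptyset=\sum_{j=1}^kz_j(n_1,\dots,n_j)$ on $G_k(\M_1)$. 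For each fixed $n_1<\dots<n_{j-1}$ in $\M_1$, the sequence $\big(z_j(n_1,\dots,n_{j-1},m)\big)_m$ is weakly null (its weak limit is $f_{j-1}(n_1,\dots,n_{j-1})-f_{j-1}(n_1,\dots,n_{j-1})=0$), and $\|z_j\|\le L$ everywhere (again by weak lower semicontinuity, applied to $\|f_j(n_1,\dots,n_j)-f_j(n_1,\dots,n_{j-1},m)\|\le L$).

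\smallskip

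\textbf{Step 2: the iterated estimate via Ramsey, and conclusion.} Fix $k$. For $1\le\ell\le k$ call $(n_1,\dots,n_\ell)\in G_\ell(\M_1)$ \emph{$\ell$-good} if
\[
\Big\|\sum_{j=1}^{\ell}z_j(n_1,\dots,n_\ell)\Big\|\ \le\ \Big(\Big\|\sum_{j=1}^{\ell-1}z_j(n_1,\dots,n_{\ell-1})\Big\|^p+C^pL^p\Big)^{1/p}\Big(1+\tfrac1{2k}\Big).
\]
By the one-step inequality applied to $y=\sum_{j<\ell}z_j(n_1,\dots,n_{\ell-1})$ and the weakly null sequence $z_\ell(n_1,\dots,n_{\ell-1},\cdot)$ of norms $\le L$, for each prefix $(n_1,\dots,n_{\ell-1})$ the set of admissible $n_\ell$ making the tuple $\ell$-good is cofinite, hence infinite; this remains true after restricting to any infinite subset of $\M_1$. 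Therefore no infinite subset of $\M_1$ can have all of its $\ell$-element subsets $\ell$-bad, so the infinite Ramsey theorem (for $\ell$-element subsets) yields an infinite set all of whose $\ell$-subsets are $\ell$-good. Performing this successively for $\ell=1,\dots,k$ produces a single infinite $\M'\subseteq\M_1$ such that every $\bar n\in G_k(\M')$ is $\ell$-good for all $\ell\le k$. Writing $a_\ell=\|\sum_{j\le\ell}z_j\|$ (so $a_0=0$), the $k$ inequalities give $a_\ell^p\le(1+\tfrac1{2k})^p\big(a_{\ell-1}^p+C^pL^p\big)$, hence $a_k^p\le C^pL^p\sum_{i=1}^k(1+\tfrac1{2k})^{pi}\le e^{p/2}\,kC^pL^p$, i.e. $\|f(\bar n)-y_\emptyset\|=a_k\le e^{1/2}CL\,k^{1/p}$ for every $\bar n\in G_k(\M')$. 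Thus $\operatorname{diam}\big(f(G_k(\M'))\big)\le 2e^{1/2}CL\,k^{1/p}$, and the theorem follows with $K:=2e^{1/2}C$, which depends only on $Y$ and $p$.

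\smallskip

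\textbf{Main obstacle.} The delicate point is Step 2. The one-step inequality selects good indices $n_\ell$ \emph{separately for each} base point $\sum_{j<\ell}z_j(n_1,\dots,n_{\ell-1})$, and these base points range over the infinite set $G_{\ell-1}(\M_1)$; reconciling all of them inside a single infinite set, correctly tracking the nested dependence on the coordinates fixed at earlier levels, and keeping the multiplicative slack bounded as $k\to\infty$ (which is why I use the $(1+\tfrac1{2k})$-factor rather than a fixed additive error), is exactly what forces the successive Ramsey selections and the separability reduction so that the weak extractions take place in a metrizable topology. Everything else is routine.
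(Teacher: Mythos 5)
Your argument is correct and is essentially the Kalton--Randrianarivony proof itself: the paper does not prove Theorem \ref{KR4.2} but simply quotes it from \cite{KR} (Theorem 4.2), and your reconstruction via iterated weak limits, the one-step $p$-smoothness inequality, and successive Ramsey extractions is exactly the argument given there. The only points worth double-checking in a write-up are the standard ones you already flag: the diagonal stabilisation of all iterated weak limits along a single infinite set (countably many prefixes, metrizable weak topology on bounded sets of the separable reflexive subspace), and the fact that ``goodness'' is defined relative to the fixed functions $z_j$ on $G_j(\M_1)$ so that the colourings are well defined on subsets.
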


\begin{proof}[Proof of Theorem \ref{asympbanachsaksref}]
Let $f:X\to Y$ be a coarse Lipschitz embedding. Pick $C>0$  so that $\omega_f(t)\leq Ct+C$, $\rho_f(t)\geq C^{-1}t-C$, for all $t\geq 0$. Assume that $X$ does not have the Banach-Saks property. By \cite{Be}, page 373,  there exists $\delta>0$ and a sequence $(x_n)_{n=1}^\infty$ in $B_X$ such that, for all $k\in\N$, and all $n_1<\ldots <n_{2k}\in\N$,  we have that

$$ \Big\|\frac{1}{2k}\sum_{j=1}^{k}(x_{n_j}-x_{n_{k+j}})\Big\|\geq \delta.$$\hfill

For each $k\in\N$, define $\varphi_k:G_k(\N)\to X$ by setting $\varphi_k(n_1,\ldots ,n_k)=x_{n_1}+\ldots +x_{n_k}$, for all $(n_1,\ldots ,n_k)\in G_k(\N)$. Therefore,  $\text{diam} (\varphi_k(G_k(\M)))\geq 2k\delta$, and we have that $\text{diam} (f\circ \varphi_k(G_k(\M)))\geq 2k\delta C^{-1}-C$, for all $k\in\N$, and all infinite $\M\subset \N$.

As,  $\text{Lip}(\varphi_k)\leq  2$, we have that $\text{Lip}(f\circ \varphi_k)\leq  3C$.  As $Y$ is asymptotically uniformly smooth, there exists $p\in (1,\infty)$ for which $Y$ is asymptotically $p$-uniformly smooth (see \cite{Ra}, Theorem 1.2). By Theorem  \ref{KR4.2}, there exists $K=K(Y)>0$ and $\M\subset \N$ such that $\text{diam}(f\circ \varphi_k(G_k(\M)))\allowbreak \leq 3KCk^{1/p}$, for all $k\in\N$. We conclude that  

$$2k\delta C^{-1}-C\leq 3KC k^{1/p},$$\hfill

\noindent for all $k\in\N$. As $p>1$, this gives us a contradiction if we let $k\to \infty$.  
\end{proof}

The following was asked in \cite{GLZ}, Problem 2, and it remains open.

\begin{problem}\label{44344}
If a Banach space $X$ coarse Lipschitz embeds into a reflexive asymptotically uniformly smooth Banach space $Y$, does it follow that $X$ has an asymptotically uniformly smooth renorming?
\end{problem}

\begin{problem}\label{famunfemb}
Let $N$ be a metric space. We say that a family of metric spaces $(M_k)_{k=1}^\infty$ \emph{uniformly Lipschitz embeds into $N$} if there exists $C>0$ and Lipschitz embeddings $f_k:M_k\to N$ such that $\text{Lip}(f)\cdot\text{Lip}(f^{-1})<C$, for all $k\in \N$. Does the family $(G_k(\N),d)_{k=1}^\infty$ uniformly Lipschitz embed into any Banach space without an asymptotically uniformly smooth renorming?  
\end{problem}

As noticed in \cite{GLZ}, Problem 6,  a positive answer to Problem \ref{famunfemb} together with Theorem \ref{KR4.2} would give us a positive answer to Problem \ref{44344}.

It is worth noticing that the Banach-Saks property is not stable under uniform equivalences, hence, it is not stable under coarse Lipschitz isomorphisms either. Indeed, if $(p_n)_{n=1}^\infty$ is a sequence in $(1,\infty)$ converging to $1$, then $(\oplus_n\ell_{p_n})_{\ell_2}$ is uniformly equivalent to $(\oplus_n\ell_{p_n})_{\ell_2}\oplus \ell_1$ (see \cite{BeL}, page 244). The space $(\oplus_n\ell_{p_n})_{\ell_2}$ has the Banach-Saks property, while $(\oplus_n\ell_{p_n})_{\ell_2}\oplus \ell_1$ does not.

Let $\mathcal{G}(\N)$ denote the set of finite subsets of $\N$. We endow $\mathcal{G}(\N)$ with the metric $D$ given by

$$D(\overline{n},\overline{m})=|\overline{n}\Delta \overline{m}|,$$\hfill

\noindent for all $\overline{n}=(n_1,\ldots ,n_k),\overline{m}=(m_1,\ldots ,m_l)\in \mathcal{G}(\N)$, where $\overline{n}\Delta \overline{m}$ denotes the symmetric difference between the sets $\overline{n}$ and  $\overline{m}$.

\begin{prop}\label{Gklipunif}
 $\mathcal{G}(\N)$  Lipschitz embeds into any Banach space $X$ without the alternating Banach-Saks property. Moreover, for any $\eps>0$, the Lipschitz embedding $f:\mathcal{G}(\N)\to X$ can be chosen so that $\text{Lip}(f)\cdot\text{Lip}(f^{-1})<1+\eps$.
\end{prop}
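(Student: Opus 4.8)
The first assertion is the cheap half. By Beauzamy's characterization of the failure of the alternating Banach--Saks property (\cite{Be}, Theorem 1, p.~369), fix $\delta>0$ and a sequence $(x_n)_{n=1}^\infty$ in $B_X$ such that
\[
\Big\|\frac1k\sum_{j=1}^k\eps_j x_{n_j}\Big\|>\delta
\]
for every $k$, every $\eps_1,\dots,\eps_k\in\{-1,1\}$ and every $n_1<\dots<n_k$. Define $f_0:\mathcal G(\N)\to X$ by $f_0(\overline n)=\sum_j x_{n_j}$. Then $f_0(\overline n)-f_0(\overline m)$ is a $\pm1$-combination of exactly $|\overline n\,\Delta\,\overline m|$ of the vectors $x_\ell$ (those indexed by $\overline n\setminus\overline m$ with sign $+$, those by $\overline m\setminus\overline n$ with sign $-$), so $\delta\, D(\overline n,\overline m)\le\|f_0(\overline n)-f_0(\overline m)\|\le D(\overline n,\overline m)$. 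Hence $\mathcal G(\N)$ Lipschitz embeds into $X$ with distortion $\le 1/\delta$. The content of the ``moreover'' is that one can replace $\delta$ by something arbitrarily close to $1$ --- not literally in $X$, but at the level of a spreading model, and then transfer this to $\mathcal G(\N)$.

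\textbf{Manufacturing an almost-$\ell_1$ structure.} Apply Rosenthal's $\ell_1$ theorem (\cite{R2}) to $(x_n)$. If it has a subsequence equivalent to the $\ell_1$ basis, then $X$ contains an isomorphic copy of $\ell_1$, hence by James's non-distortion theorem a normalized basic sequence $(1+\eps)$-equivalent to the $\ell_1$ basis; running $f_0$ with this sequence in place of $(x_n)$ already gives distortion $\le 1+\eps$, and we are done in this case. Otherwise we may pass to a weakly Cauchy subsequence, and then $z_n:=x_{2n-1}-x_{2n}$ is weakly null (the weak$^*$ limits in $X^{**}$ cancel), semi-normalized ($\|z_n\|\ge 2\delta$ by the $k=2$ estimate with signs $+,-$, and $\le 2$), and it inherits the property $\|\tfrac1k\sum\eps_j z_{n_j}\|>2\delta$ for all $k$, all signs and all increasing tuples, since this is a balanced sign pattern applied to $2k$ of the $x_\ell$'s. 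Pass to a subsequence of $(z_n)$ generating a spreading model $(\tilde e_\ell)$; being a spreading model of a weakly null sequence it is $1$-spreading and $1$-(suppression-)unconditional, and after rescaling $\|\tilde e_\ell\|=1$, $\|\tfrac1k\sum\eps_j\tilde e_j\|\ge 2\delta$ for all $k$ and signs. Set $\beta(M)=\|\tilde e_1+\dots+\tilde e_M\|$. This is subadditive with $\beta(1)=1$ and $2\delta M\le\beta(M)\le M$, so by Fekete's lemma $\gamma:=\lim_M\beta(M)/M=\inf_M\beta(M)/M\in[2\delta,1]$. Fix $M_0$ so large that $\gamma\le\beta(M)/M<\gamma(1+\eps)$ for all $M\ge M_0$, and put $w_i=\beta(M_0)^{-1}\sum_{\ell\in I_i}\tilde e_\ell$, where $I_i$ is the $i$-th block of length $M_0$. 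By $1$-unconditionality (signs are irrelevant) and $1$-spreading, for any finite $F$ and any signs $\|\sum_{i\in F}\eps_i w_i\|=\beta(|F|M_0)/\beta(M_0)$; subadditivity gives $\le|F|$ and the choice of $M_0$ gives $\ge|F|/(1+\eps)$. Thus $(w_i)_i$ is $(1+\eps)$-equivalent to the $\ell_1$ basis inside the spreading-model space, and consequently, for every $N$ and every $\eta>0$, $X$ itself contains a normalized block sequence $(v_1,\dots,v_N)$ that is $\big((1+\eps)(1+\eta)\big)$-equivalent to the $\ell_1^N$ basis (finite representability of spreading models).

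\textbf{Passing to $\mathcal G(\N)$ --- the main obstacle.} Write $\mathcal G(\N)=\bigcup_N\{0,1\}^{\{1,\dots,N\}}$ as an increasing union of isometric copies of finite Hamming cubes. One cannot simply take the infinite sequence $(w_i)_{i\in\N}$ and realize it in $X$ --- that would embed $\ell_1$ into $X$, which may well fail (e.g.\ for convexified Tsirelson spaces) --- so the embedding $f$ cannot be affine, and this transfer step is the heart of the matter. The plan is to enumerate $\mathcal G(\N)$ and construct $f$ by a gliding-hump/diagonal argument: having defined $f$ on $\{0,1\}^{\{1,\dots,N\}}$ using normalized, almost-$\ell_1$, finitely supported blocks of $X$, extend it to $\{0,1\}^{\{1,\dots,N+1\}}$ by choosing the new coordinate block far enough out (using the $1$-spreading of $(\tilde e_\ell)$ and the Fekete regrouping of Step~2) that the finitely many relevant distance estimates --- in particular the cross-terms between ``old'' and ``new'' blocks, which are controlled by admissibility-type inequalities valid far out --- stay within a factor $1+\eps$. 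The delicate point, which I would spend most effort on, is to keep the distortion uniformly bounded by $1+\eps$ over \emph{all} $N$ while the supply of genuinely $\ell_1$-like directions in $X$ is only asymptotic (finite-dimensional), so that the constants in the successive finite realizations do not accumulate.
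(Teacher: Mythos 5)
Your first paragraph correctly proves the qualitative assertion: with Beauzamy's sequence satisfying $\|\frac1k\sum_j\eps_jx_{n_j}\|>\delta$ and $x_n\in B_X$, the summing map $f_0(\overline n)=\sum_{j\in\overline n}x_j$ has distortion at most $1/\delta$, since $f_0(\overline n)-f_0(\overline m)$ is a $\pm1$-combination of exactly $D(\overline n,\overline m)$ of the $x_\ell$'s. But the ``moreover'' part is not proved. The paper's own proof is a two-line verification because it invokes the \emph{full} strength of Theorem 1 of \cite{Be} (p.\ 369): failure of the alternating Banach--Saks property yields, for \emph{every} $\eta>0$, a single infinite sequence $(x_n)$ with the two-sided estimate $1-\eta\leq\|\frac1k\sum_j\eps_jx_{n_j}\|\leq1+\eta$ for all $k$, all signs, and all $n_1<\dots<n_k$; the same summing map then has distortion $(1+\eta)/(1-\eta)<1+\eps$. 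You instead start from the weaker one-sided $\delta$-estimate and try to bootstrap, and the bootstrap breaks exactly where you say it does.

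Concretely: your Fekete argument produces blocks $(w_i)$ with good two-sided \emph{sign} estimates only in the spreading model, which is merely finitely representable in $X$, so for each $N$ you get a different finite family $(v_1^{(N)},\dots,v_N^{(N)})$. A single Lipschitz map on all of $\mathcal G(\N)$ of summing form $f(\overline n)=\sum_{i\in\overline n}v_i$ with distortion $<1+\eps$ requires one \emph{fixed} infinite sequence $(v_i)$ satisfying $\|\sum_{i\in F}\eps_iv_i\|\geq(1-\eps)|F|$ for all finite $F$ and all signs; extracting such an infinite sequence from the finite realizations is precisely the nontrivial content of Beauzamy's theorem, and your sketch (``choose the new block far enough out'') does not supply the argument --- the constants from successive finite realizations have no reason not to accumulate without an extra iteration/diagonalization of spreading models. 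Two smaller points: your worry that such a fixed sequence would force $\ell_1\subset X$ is unfounded, since two-sided estimates for $\pm1$ coefficients do not give equivalence to the $\ell_1$ basis for general coefficients (the unit vector basis of $T$ is weakly null and satisfies $\|\sum_{j\in F}\eps_je_j\|\geq c|F|$ for all finite $F$); for the same reason your assertion that $(w_i)$ is ``$(1+\eps)$-equivalent to the $\ell_1$ basis'' is an overstatement, though only the sign estimates are actually needed here.
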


\begin{proof}
By Theorem 1 of \cite{Be}, page 369, for all $\eta>0$, there exists a bounded sequence $(x_n)_{n=1}^\infty$ in $X$ such that, for all $k\in\N$, all $\eps_1,\ldots ,\eps_k\in\{-1,1\}$, and all $n_1<\ldots <n_k$, we have

$$1-\eta\leq \Big\|\frac{1}{k}\sum_{j=1}^k\eps_jx_{n_j}\Big\|\leq 1+\eta.$$\hfill

\noindent  Define $\varphi:\mathcal{G}(\N)\to X$ by setting $\varphi(n_1,\ldots ,n_k)=x_{n_1}+\ldots +x_{n_k}$, for all $(n_1,\ldots ,n_k)\in \mathcal{G}(\N)\setminus \{\emptyset\}$, and $\varphi(\emptyset)=0$. Then, we have that

$$(1-\eta)\cdot D(\overline{n},\overline{m})\leq\|\varphi_k(\overline{n})-\varphi_k(\overline{m})\|\leq  (1+\eta)\cdot D(\overline{n},\overline{m})$$\hfill

\noindent for all $\overline{n},\overline{m}\in \mathcal{G}(\N)$.
\end{proof}

\begin{problem}
If $X$ has the Banach-Saks property, does it follow that $\mathcal{G}(\N)$ does not Lipschitz embed into $X$? In other words, if $X$ is a reflexive Banach space, do we have that $\mathcal{G}(\N)$ Lipschitz embed into $X$ if and only if $X$ does not have the Banach-Saks property?
\end{problem}

%\subsection{Alternating Banach-Saks property does not implies asymptotically uniformly smooth renorming.} \label{8978}

 By Corollary \ref{asympbansaks} above,  any Banach space with an asymptotically uniformly smooth renorming has the alternating Banach-Saks property. To the best of our knowledge, there is no known example of a Banach space which has the alternating Banach-Saks property but does not admit an asymptotically uniformly smooth renorming. However, using descriptive set theoretical  arguments, one can show the existence of such spaces. Recall, $(X,\Omega)$ is called a \emph{standard Borel space} if $X$ is a set and $\Omega$ is a $\sigma$-algebra on $X$ which is the Borel $\sigma$-algebra associated to a Polish topology on $X$ (i.e., a topology generated by a complete separable metric). A subset $A\subset X$ is called \emph{analytic} if it is the image of a standard Borel space under a Borel map. We refer to \cite{Do} and \cite{Br}, Section 2, for more details  on the descriptive set theory of separable Banach spaces.

Let $C[0,1]$ be the space of continuous real-valued functions on $[0,1]$ endowed with the supremum norm. Let 

$$\SB=\{X\in C[0,1]\mid X\text{ is a closed linear subspace}\},$$\hfill

\noindent and endow $\SB$ with the Effros-Borel structure, i.e., the $\sigma$-algebra generated by

$$\{X\in \SB\mid X\cap U\neq \emptyset\},\ \ \text{for} \ \  U\subset C[0,1]\ \ \text{open.}$$\hfill

\noindent This makes $\SB$ into  a standard Borel space and, as $C[0,1]$ contains isometric copies of every separable Banach space, $\SB$ can be seen as a coding set for the class of all separable Banach spaces. Therefore, we can talk about  Borel and analytic classes of separable Banach spaces. 

By \cite{Br}, Theorem 17, the subset $\text{ABS}\subset \SB$ of Banach spaces with the alternating Banach-Saks is  not analytic. On the other hand, letting $\text{AUS}=\{X\in\SB\mid X\text{ is}\ \allowbreak\text{asymptotically}\ \allowbreak \text{uniformly smooth}\},$ we have

$$X\in \text{AUS}\Leftrightarrow \forall \eps\in \Q_+ \exists \delta\in \Q_+ \forall t\in \Q_+ \Big(t<\delta\Rightarrow \overline{\rho}_X(t)<\eps t\Big).$$\hfill

\noindent As $\{X\in \SB\mid \text{dim}(C[0,1]/X)<\infty\}$ is Borel, it is easy to check that the condition $A(t,\eps)\subset \SB$ given by

$$X\in A(t,\eps)\Leftrightarrow  \overline{\rho}_X(t)<\eps t$$\hfill

\noindent defines an  analytic subset of $\SB$ (for similar arguments, we refer to \cite{Do}, Chapter 2, Section 2.1). So,  $\text{AUS}$ must be analytic. Hence, letting $\text{AUSable}\subset \SB$ be the subset of Banach spaces with an asymptotically uniformly smooth renorming, we have that

$$X\in \text{AUSable}\Leftrightarrow \exists Y\in \text{AUS} \ \ \text{such that}\ \ X\cong Y.$$\hfill

\noindent As the isomorphism relation in $\SB\times \SB$ forms an analytic set (see \cite{Do}, page 11), it follows that $\text{AUSable}$ is analytic. This discussion together with Corollary \ref{asympbansaks} gives us the following.

\begin{prop}\label{8978}
$\text{AUSable}\subsetneq \text{ABS}$. In particular,   there exist separable Banach spaces with the alternating Banach-Saks property which do not admit an asymptotically uniformly smooth renorming.
\end{prop}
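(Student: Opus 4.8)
The plan is to establish the inclusion $\text{AUSable}\subseteq \text{ABS}$ via Corollary \ref{asympbansaks} and then argue that the inclusion is strict by a complexity (descriptive set-theoretic) obstruction. For the inclusion: if $X$ admits an asymptotically uniformly smooth renorming, then $X$ with that renorming falls under Corollary \ref{asympbansaks}, so $X$ has the alternating Banach-Saks property; since the alternating Banach-Saks property is an isomorphic invariant of a Banach space (it refers only to bounded sequences and convergence of Cesàro-type averages, both preserved under linear isomorphism), $X$ itself has the alternating Banach-Saks property, i.e.\ $X\in \text{ABS}$. This gives $\text{AUSable}\subseteq \text{ABS}$.

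For strictness, the idea is purely a cardinality-free argument about Borel complexity: the discussion immediately preceding the statement shows that $\text{AUSable}$ is an analytic subset of $\SB$, while by \cite{Br}, Theorem 17, the set $\text{ABS}$ is \emph{not} analytic. Two sets, one analytic and one not analytic, cannot be equal; combined with the inclusion $\text{AUSable}\subseteq \text{ABS}$ from the previous paragraph, this forces $\text{AUSable}\subsetneq \text{ABS}$. The ``In particular'' clause is then immediate: any $X\in \text{ABS}\setminus \text{AUSable}$ is a separable Banach space with the alternating Banach-Saks property that admits no asymptotically uniformly smooth renorming, and such $X$ must exist since the set difference is nonempty (indeed, it is non-analytic, hence certainly not empty).

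I should take care to present the two components cleanly: first the soft isomorphic-invariance step feeding Corollary \ref{asympbansaks}, then the clash of complexities. The only genuine content being invoked from outside is \cite{Br}, Theorem 17 (non-analyticity of $\text{ABS}$) and the standard facts that analyticity is preserved under the operations used to build $\text{AUSable}$ — continuous/Borel images, countable intersections and unions, and composition with the analytic isomorphism relation — all of which were spelled out in the paragraphs above.

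The main obstacle, such as it is, is not in the proof of this statement per se but lies in the two ingredients it rests on: verifying that $\text{AUS}$ (and hence $\text{AUSable}$) is genuinely analytic, which requires checking that the Borel quantifier manipulations over $\Q_+$ and over finite-codimensional subspaces behave as claimed — this is exactly the computation sketched before the statement and is routine given the cited descriptive set theory references — and, on the other side, the non-trivial result from \cite{Br} that $\text{ABS}$ is not analytic. Granting both, the present proof is essentially a one-line contradiction. Here is the argument.

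\begin{proof}
By Corollary \ref{asympbansaks}, every Banach space which is asymptotically uniformly smooth has the alternating Banach-Saks property. The alternating Banach-Saks property is clearly preserved under linear isomorphism, so every $X\in \SB$ admitting an asymptotically uniformly smooth renorming has the alternating Banach-Saks property; that is, $\text{AUSable}\subseteq \text{ABS}$.

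By the discussion preceding the statement, $\text{AUSable}$ is an analytic subset of $\SB$. On the other hand, by \cite{Br}, Theorem 17, $\text{ABS}$ is not analytic. Since an analytic set cannot equal a non-analytic one, the inclusion $\text{AUSable}\subseteq \text{ABS}$ must be proper, i.e., $\text{AUSable}\subsetneq \text{ABS}$. In particular, $\text{ABS}\setminus \text{AUSable}\neq \emptyset$, so there exists a separable Banach space with the alternating Banach-Saks property which does not admit an asymptotically uniformly smooth renorming.
\end{proof}
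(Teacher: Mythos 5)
Your proof is correct and follows exactly the paper's own argument: the inclusion $\text{AUSable}\subseteq \text{ABS}$ comes from Corollary \ref{asympbansaks}, and strictness follows from the clash between the analyticity of $\text{AUSable}$ (established in the discussion preceding the statement) and the non-analyticity of $\text{ABS}$ from \cite{Br}, Theorem 17. Nothing is missing.
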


%\begin{problem}
%By Corollary \ref{asympbansaks}, a reflexive space with an %asymptotically uniformly smooth renorming has the Banach-Saks property. Does the converse hold? As Banach spaces with the Banach-Saks property are automatically reflexive, the question is whether the Banach-Saks property is enough to guarantee an asymptotically uniformly smooth renorming.
%\end{problem}

\section{Asymptotically $p$-uniformly convex/smooth spaces.}\label{sectionconvsmooth}

In this section, we will use results from \cite{KR} in order to obtain some restrictions on coarse embeddings $X\to Y$, where the spaces $X$ and $Y$ are assumed to have some asymptotic properties  (see Theorem \ref{upperpart}). We   obtain restrictions on the existence of coarse embeddings between the convexified Tsirelson spaces (Theorem \ref{corcorQuan}(i)),  convexified Schlumprecht spaces (Theorem \ref{corcorQuan}(ii)), and some specific hereditarily indecomposable spaces  introduced in \cite{D} (Corollary \ref{herdind}).

\begin{thm}\label{upperpart}
Let $p,q\in (1,\infty)$. Let $X$ be an infinite dimensional Banach space with the $p$-co-Banach-Saks property and  not containing $\ell_1$. Let $Y$ be a reflexive asymptotically $q$-uniformly smooth Banach space. Then, there exists  no coarse embedding $f:X\to Y$ such that

$$\limsup_{k\to\infty} \frac{\rho_f(k^{1/p})}{k^{1/q}}=\infty.$$\hfill
\end{thm}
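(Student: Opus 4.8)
The plan is to argue by contradiction, combining the $p$-co-Banach-Saks property of $X$ with the stabilization result Theorem \ref{KR4.2} for $Y$, exactly in the spirit of the proof of Theorem \ref{asympbanachsaksref}. Suppose there is a coarse embedding $f:X\to Y$ with $\limsup_{k\to\infty}\rho_f(k^{1/p})/k^{1/q}=\infty$. Since $f$ is a coarse map on a Banach space, it is automatically coarse Lipschitz (Lemma 1.4 of \cite{K3}), so we fix $C>0$ with $\omega_f(t)\le Ct+C$ and $\rho_f(t)\ge C^{-1}t-C$ for all $t\ge 0$; here the second estimate is not the crucial one — what matters is the hypothesis on $\rho_f$ along the sequence $k^{1/p}$.

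\textbf{Constructing the test maps.} First I would produce, inside $X$, a semi-normalized weakly null sequence $(x_n)_{n=1}^\infty$; since $X$ is infinite dimensional and does not contain $\ell_1$, by Rosenthal's $\ell_1$-theorem every bounded sequence has a weakly Cauchy subsequence, and taking successive differences yields a semi-normalized weakly null sequence (after discarding a possibly trivial case, one gets $\inf_n\|x_n\|>0$; alternatively just take a normalized basic sequence, which a reflexive or $\ell_1$-free infinite dimensional space always contains, and pass to a weakly null block-type subsequence). Applying the $p$-co-Banach-Saks property and passing to a subsequence, we get $c>0$ with $\|x_{n_1}+\dots+x_{n_k}\|\ge ck^{1/p}$ whenever $k\le n_1<\dots<n_k$. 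Now for each $k\in\N$ define $\varphi_k:G_k(\M_0)\to X$ by $\varphi_k(n_1,\dots,n_k)=x_{n_1}+\dots+x_{n_k}$, where $\M_0=\{k,k+1,\dots\}$ (or simply restrict attention to tuples with $n_1\ge k$); then $\mathrm{Lip}(\varphi_k)\le 2\sup_n\|x_n\|=:2B$, so $f\circ\varphi_k$ is Lipschitz with $\mathrm{Lip}(f\circ\varphi_k)\le 2BC+C\le 3BC$ (using that on a $2$-separated metric space a coarse Lipschitz map is genuinely Lipschitz with a controlled constant). Moreover any two tuples in $G_k$ with disjoint first halves, such as $\bar n$ and a shift, satisfy $d(\bar n,\bar m)=k$, and $\varphi_k(\bar n)-\varphi_k(\bar m)$ is a sum of $\le 2k$ of the $x_n$'s with signs, over indices $\ge k$; using the $p$-co-Banach-Saks estimate on each of the two "halves'' one checks $\|\varphi_k(\bar n)-\varphi_k(\bar m)\|\ge $ (const)$\,k^{1/p}$ for suitable pairs — more simply, there exist $\bar n,\bar m\in G_k(\M_0)$ with $d(\bar n,\bar m)=k$ and $\|\varphi_k(\bar n)-\varphi_k(\bar m)\|\ge ck^{1/p}$, since one may take $\bar m$ to be $\bar n$ with its last $k$ coordinates pushed far out, so that the difference is a single block sum of $k$ of the $x_n$'s with indices $\ge k$ plus another such block, disjointly supported, giving a lower bound of order $k^{1/p}$. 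Consequently $\mathrm{diam}(f\circ\varphi_k(G_k(\M)))\ge \rho_f(ck^{1/p})$ for every infinite $\M\subset\M_0$, and by monotonicity of $\rho_f$ this is $\ge \rho_f(c k^{1/p})$, which by the hypothesis grows faster than any multiple of $k^{1/q}$ along a subsequence of $k$'s (after absorbing the constant $c$, using that $\rho_f(ct)\ge \rho_f$ evaluated appropriately — one should be slightly careful and instead run the argument along the values $t_k=k^{1/p}$ directly, noting $\varphi_k$ can be rescaled, or simply note $\limsup_k \rho_f(ck^{1/p})/k^{1/q}=\infty$ too since $c$ is fixed).

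\textbf{Applying stabilization and concluding.} Since $Y$ is reflexive and asymptotically $q$-uniformly smooth, Theorem \ref{KR4.2} gives $K=K(Y)>0$ such that for every $k$ there is an infinite $\M_k\subset\M_0$ (one can even take these nested, or just apply it for each $k$ separately) with $\mathrm{diam}(f\circ\varphi_k(G_k(\M_k)))\le K\,\mathrm{Lip}(f\circ\varphi_k)\,k^{1/q}\le 3BCK\,k^{1/q}$. Combining with the lower bound from the previous paragraph, we obtain, along the subsequence of $k$'s realizing the $\limsup$,
$$\rho_f(ck^{1/p})\le \mathrm{diam}(f\circ\varphi_k(G_k(\M_k)))\le 3BCK\,k^{1/q},$$
so $\rho_f(ck^{1/p})/k^{1/q}\le 3BCK$ is bounded along that subsequence — contradicting $\limsup_k\rho_f(ck^{1/p})/k^{1/q}=\infty$. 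Hence no such $f$ exists.

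\textbf{Main obstacle.} The only genuinely delicate point is bookkeeping the constants so that the $\limsup$ hypothesis, stated for $\rho_f(k^{1/p})$, survives the passage through $\varphi_k$ (which contributes the factor $c$ from the $p$-co-Banach-Saks estimate and a factor $\le 2$ in the Lipschitz constant). This is harmless since $c$ and $B$ are fixed independent of $k$, but it is worth writing carefully; the cleanest fix is to choose the $x_n$'s normalized and to exhibit explicitly, for each $k$, a pair of $k$-tuples whose $\varphi_k$-images differ by a norm $\ge c\,k^{1/p}$ and whose distance in $G_k$ is exactly $k$, so that the relevant increment of $\rho_f$ is evaluated precisely at $c\,k^{1/p}$. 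Everything else is a direct transcription of the proof of Theorem \ref{asympbanachsaksref}, with the weakly null hypothesis (rather than a failure of Banach-Saks) feeding the $p$-co-Banach-Saks property instead of the $p$-Banach-Saks property.
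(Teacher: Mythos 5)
Your overall architecture matches the paper's: build the test maps $\varphi_k(n_1,\dots,n_k)=x_{n_1}+\dots+x_{n_k}$ on $G_k(\N)$ from a weakly null separated sequence, bound $\mathrm{diam}(f\circ\varphi_k(G_k(\M_k)))$ from above by $K\,\mathrm{Lip}(f\circ\varphi_k)\,k^{1/q}$ via Theorem \ref{KR4.2}, bound the same diameter from below via the $p$-co-Banach-Saks property, and compare. The constant bookkeeping you flag as the ``main obstacle'' is indeed harmless. But there is a genuine gap in the lower bound, which is the one step where the $p$-co-Banach-Saks property actually enters. You apply the property to $(x_n)_{n=1}^\infty$ itself to get $\|x_{n_1}+\dots+x_{n_k}\|\geq ck^{1/p}$, and then claim that for two \emph{disjoint} tuples $\bar n,\bar m$ one has $\|\varphi_k(\bar n)-\varphi_k(\bar m)\|=\|\sum_j x_{n_j}-\sum_j x_{m_j}\|\gtrsim k^{1/p}$, ``using the estimate on each of the two halves'' or because the two block sums are ``disjointly supported.'' Neither argument is valid: the triangle inequality only gives $\|a-b\|\geq\|a\|-\|b\|$, which is useless when both halves have norm $\geq ck^{1/p}$, and ``disjointly supported'' has no meaning here since $X$ carries no basis and no unconditional structure. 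The $p$-co-Banach-Saks property controls unsigned sums $x_{n_1}+\dots+x_{n_k}$ only; it says nothing about signed combinations $\sum_i\eps_i x_{l_i}$, which is exactly what a difference of two disjoint sums is.

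The paper's proof avoids this by applying the $p$-co-Banach-Saks property not to $(x_n)$ but to the difference sequence $(x_{n_{2j-1}}-x_{n_{2j}})_{j=1}^\infty$ (indexed along a diagonalization of the nested sets $\M_k$), which is weakly null and semi-normalized precisely because $\inf_{n\neq m}\|x_n-x_m\|>0$. After passing to a subsequence this yields $\|\sum_{j=1}^k(x_{m_{2j-1}}-x_{m_{2j}})\|\geq ck^{1/p}$ with all $m_i\in\M_k$, and the two \emph{interlaced} tuples $(m_1,m_3,\dots,m_{2k-1})$ and $(m_2,m_4,\dots,m_{2k})$ then realize the diameter lower bound exactly, since the difference of their $\varphi_k$-images is that very sum. (Your conclusion could alternatively be rescued by passing to a spreading model and invoking its suppression unconditionality to transfer the lower bound from unsigned to signed sums, but that is an additional idea your write-up does not supply, and the paper's route is more elementary.) Note also that the order of operations matters: the co-Banach-Saks extraction must be performed \emph{after} the Kalton--Randrianarivony stabilization so that the witnessing indices land inside $\M_k$; the paper's diagonalization handles this, whereas your version applies the two extractions in the opposite order and would need a word to reconcile them.
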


%\begin{remark}
%It is worth noticing that, in some sense, Theorem \ref{upperpart} cannot be improved. Indeed, if $p\leq q$, it is known that $(\ell_p,\|\cdot\|_p^{p/q})\underset{Lip}{\hookrightarrow}  \ell_q$ (see \cite{AB}, Proposition 5.3 (iii)). Therefore,  $\ell_p$ coarsely embeds into $\ell_q$ by a map $f$ such that, for some $c>0$, $\rho_f(t)\geq ct^{p/q}$,  for all $t\geq 0$. 
%\end{remark}

\begin{proof}
Let $f:X\to Y$ be a coarse embedding. So, there exists $C>0$, such that $\omega_f(t)\leq Ct+C$, for all $t>0$. As $X$ does not contain $\ell_1$, by Rosenthal's $\ell_1$-theorem, we can pick a normalized weakly null sequence $(x_n)_{n=1}^\infty$ in $X$, with $\inf_{n\neq m}\|x_n-x_m\|>0$. For each $k\in\N$, define a map $\varphi_k: G_k(\N)\to X$ by letting

$$\varphi_k(n_1,\ldots ,n_k)=x_{n_1}+\ldots +x_{n_k},$$\hfill

\noindent for all $(n_1,\ldots ,n_k)\in G_k(\N)$. So, $\varphi_k$ is a bounded map.

If $d((n_1,\ldots ,n_k),(m_1,\ldots ,m_k))\leq 1$, then $\|\sum_{j=1}^k x_{n_j}-\sum_{j=1}^k x_{m_j}\|\leq 2$. So, $\text{Lip}(f\circ\varphi_k)\leq 3C$. By Theorem  \ref{KR4.2}, there exists $K=K(Y)>0$ and an infinite subset $\M_k\subset \N$ such that 

$$\text{diam}(f\circ\varphi_k(G_k(\M_k)))\leq 3KCk^{1/q}.$$\hfill

Without loss of generality, we may assume that $\M_{k+1}\subset \M_k$, for all $k\in\N$. Let $\M\subset \N$ diagonalize the sequence $(\M_k)_{k=1}^\infty$, say $\M=(n_j)_{j=1}^\infty$. If a sequence $(y_n)_{n=1}^\infty$ is weakly null, so is $(y_{2n-1}-y_{2n})_{n=1}^\infty$. Therefore, using the fact that  $X$ has the $p$-co-Banach-Saks property to the weakly null sequence $(x_{n_{2j-1}}-x_{n_{2j}})_{j=1}^\infty$, we get that there exists $c>0$  such that, for all $k\in\N$, there exists $ m_1<\ldots <m_{2k}\in \M_k$, such that

$$\Big\|\sum_{j=1}^k(x_{m_{2j-1}}-x_{m_{2j}})\Big\|\geq ck^{1/p}.$$\hfill

\noindent  Therefore, we have that $\text{diam}(\varphi_k(G_k(\M_k)))\geq ck^{1/p}$, which implies that $\text{diam}(f \circ\varphi_k(G_k(\M_k)))\geq \rho_f(ck^{1/p})$, for all $k\in\N$. So,

$$\rho_f(ck^{1/p})\leq 3KCk^{1/q},$$\hfill

\noindent for all $k\in\N$. Therefore, if $\limsup_{k\to\infty} \rho_f(k^{1/p})k^{-1/q}=\infty$, we get a contradiction.
\end{proof}

\begin{remark}\label{remell1} Let $X$ be any Banach space containing a sequence $(x_n)_{n=1}^\infty$ which  is asymptotically $\ell_1$, i.e., there exists $L>0$ such that, for all $m\in\N$, there exists $k\in\N$ such that $(x_{n_j})_{j=1}^m$ is $L$-equivalent to $(e_j)_{j=1}^m$, for all $k\leq n_1<\ldots <n_m\in\N$, where $(e_j)_{j=1}^\infty$ is the standard $\ell_1$-basis. Then, proceeding exactly as above, we can show that there exists no coarse embedding $f:X\to Y$ such that 

$$\limsup_{k\to \infty}\frac{\rho_f(k)}{k^{1/q}}=\infty,$$\hfill

\noindent  where $q\in (1,\infty)$ and $Y$ is a reflexive asymptotically $q$-uniformly smooth Banach space. 
\end{remark}

\iffalse
The following is a simple corollary of Theorem \ref{upperpart}

\begin{cor}\label{qpasymp}
Let $1<p<q$. Let $X$ be asymptotically $p$-uniformly convex, and  let $Y$ be reflexive asymptotically  $q$-uniformly smooth. Then $X$ does not coarse Lipschitz embed into $Y$.
\end{cor}

\begin{proof}
By  Proposition \ref{lemmakalton}, $X$ has the $p$-co-Banach-Saks property. The result  follows from Theorem \ref{upperpart} and Remark \ref{remell1}.
\end{proof}

Notice that $Y$ being reflexive in Corollary \ref{qpasymp}  cannot be removed. Indeed,  $c_0$ contains a Lipschitz copy of any separable metric space (see \cite{Ah}), and it is also asymptotically  $q$-uniformly smooth, for any $q\in (1,\infty)$.

%\subsection{Quantitative applications.}\label{Quantitativesubsec}  
\fi
 
Let $X$ and $Y$ be Banach spaces. We define $\alpha_Y(X)$ as the supremum of all $\alpha>0$  for which there exists a coarse embedding $f:X\to Y$ and $L>0$ such that

$$L^{-1}\|x-y\|^\alpha-L\leq \|f(x)-f(y)\|,$$\hfill

\noindent for all $x,y\in X$. We call $\alpha_Y(X)$ the \emph{compression exponent of $X$ in $Y$}, or the \emph{$Y$-compression of $X$}. If, for all $\alpha>0$, no such $f$ and $L$ exist, we set $\alpha_Y(X)=0$. As $\omega_f$ is always bounded by an affine map (as $X$ is a Banach space), it follows  that $\alpha_Y(X)\in [0,1]$.  Also, $\alpha_Y(X)=0$ if $X$ does not coarsely embed into $Y$.

The quantity $\alpha_Y(X)$ was first introduced by E. Guentner and J. Kaminker in \cite{GuKa}. For a detailed study of $\alpha_{\ell_q}(\ell_p)$, $\alpha_{L_q}(\ell_p)$, $\alpha_{\ell_q}(L_p)$, and $\alpha_{L_q}(L_p)$, where $p,q\in (0,\infty)$, we refer to \cite{B}. 

Using this terminology, let us reinterpret Theorem \ref{upperpart}.

\begin{thm}\label{upperpartQuan}
Let $1<p<q$. Let $Y$ be a reflexive asymptotically $q$-uniformly smooth Banach space. The following holds.

\begin{enumerate}[(i)]
\item If $X$ contains a sequence which is asymptotically $\ell_1$, then $\alpha_Y(X)\leq 1/q$.
\item If $X$ is an infinite dimensional Banach space  with the $p$-co-Banach-Saks property and not containing $\ell_1$, then $\alpha_Y(X)\leq p/q$.
\end{enumerate}

\noindent In particular, $X$ does not coarse Lipschitz embed into $Y$. 
\end{thm}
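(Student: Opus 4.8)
The plan is to reinterpret Theorem \ref{upperpart} in the language of the compression exponent $\alpha_Y(X)$ by translating its asymptotic conclusion into a statement about polynomial lower bounds for the compression modulus. The starting point is the observation that if $f\colon X\to Y$ is a coarse embedding witnessing $\alpha_Y(X)>\alpha$, i.e., $L^{-1}\|x-y\|^\alpha-L\le\|f(x)-f(y)\|$ for all $x,y$, then $\rho_f(t)\ge L^{-1}t^\alpha-L$ for all $t\ge 0$. Plugging in $t=k^{1/p}$ gives $\rho_f(k^{1/p})\ge L^{-1}k^{\alpha/p}-L$, so $\rho_f(k^{1/p})k^{-1/q}\ge L^{-1}k^{\alpha/p-1/q}-Lk^{-1/q}$. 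If $\alpha/p>1/q$, i.e., $\alpha>p/q$, the right-hand side tends to $\infty$, so $\limsup_{k\to\infty}\rho_f(k^{1/p})k^{-1/q}=\infty$, contradicting Theorem \ref{upperpart}. Hence under hypothesis (ii) every coarse embedding $f\colon X\to Y$ satisfies the $\limsup$ bound finitely, forcing $\alpha_Y(X)\le p/q$.

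For part (i), I would argue analogously but using Remark \ref{remell1} in place of Theorem \ref{upperpart}: if $X$ contains an asymptotically $\ell_1$ sequence and $f$ witnesses $\alpha_Y(X)>\alpha$, then $\rho_f(k)\ge L^{-1}k^\alpha-L$, so $\rho_f(k)k^{-1/q}\ge L^{-1}k^{\alpha-1/q}-Lk^{-1/q}\to\infty$ whenever $\alpha>1/q$; Remark \ref{remell1} says no coarse embedding can have $\limsup_{k\to\infty}\rho_f(k)k^{-1/q}=\infty$, so $\alpha>1/q$ is impossible and $\alpha_Y(X)\le 1/q$. In both cases the supremum defining $\alpha_Y(X)$ is taken over $\alpha$ for which such an $f$ and $L$ exist, so ruling out all $\alpha>p/q$ (resp. $\alpha>1/q$) gives the stated inequality on $\alpha_Y(X)$.

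Finally, for the "in particular" clause: a coarse Lipschitz embedding $f\colon X\to Y$ satisfies $\rho_f(t)\ge L^{-1}t-L$ for some $L>0$ and all $t\ge 0$, which in particular witnesses $\alpha_Y(X)\ge 1$ (with exponent $\alpha=1$, since $L^{-1}\|x-y\|-L\le\|f(x)-f(y)\|$). But since $p<q$ we have $p/q<1$ and $1/q<1$, contradicting the bounds just established. Hence $X$ does not coarse Lipschitz embed into $Y$ in either case. I do not expect any real obstacle here: the entire argument is a bookkeeping translation between the $\rho_f$-asymptotics already proved and the definition of $\alpha_Y(X)$; the only mild subtlety is being careful that the additive constants $L$ and the affine (rather than purely homogeneous) form of the lower bound do not interfere with the limiting argument, which they do not because the dominant power of $k$ controls the behaviour as $k\to\infty$.
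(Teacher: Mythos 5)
Your proposal is correct and follows essentially the same route as the paper: part (ii) is the direct translation of Theorem \ref{upperpart} via $\rho_f(k^{1/p})\ge L^{-1}k^{\alpha/p}-L$, forcing $\alpha/p-1/q\le 0$, and part (i) is the same reasoning applied to Remark \ref{remell1}. The "in particular" clause is likewise handled as in the paper, by noting that a coarse Lipschitz embedding would witness exponent $\alpha=1>p/q$.
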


\begin{proof}
(ii) Let $L>0$ and $f:X\to Y$ be a coarse embedding such that $\rho_f(t)\geq L^{-1}t^\alpha-L$, for all $t>0$. By Theorem \ref{upperpart}, we must have 

$$\limsup_{k\to\infty}k^{\alpha/p-1/q}L^{-1}-Lk^{-1/q}<\infty.$$ \hfill

\noindent Therefore, $\alpha/p-1/q\leq 0$, and the result follows.

(i) This follows from Remark \ref{remell1} and the same reasoning as item (ii) above.
\end{proof}

Notice that $Y$ being reflexive in Theorem \ref{upperpartQuan}  cannot be removed. Indeed,  $c_0$ contains a Lipschitz copy of any separable metric space (see \cite{Ah}), and it is also asymptotically  $q$-uniformly smooth, for any $q\in (1,\infty)$.

\begin{cor}
Let $1<p<q$. Let $X$ be asymptotically $p$-uniformly convex, and $Y$ be reflexive and asymptotically $q$-uniformly smooth. Then $\alpha_{Y}(X)\leq p/q$.
\end{cor}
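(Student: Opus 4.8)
The plan is to deduce this Corollary directly from Theorem \ref{upperpartQuan}(ii). The only thing to verify is that the two hypotheses of that theorem are met by an asymptotically $p$-uniformly convex Banach space $X$: namely, that $X$ has the $p$-co-Banach-Saks property, and that $X$ does not contain an isomorphic copy of $\ell_1$.

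First I would invoke Proposition \ref{lemmakalton}, which states precisely that an asymptotically $p$-uniformly convex Banach space has the $p$-co-Banach-Saks property; this handles the first hypothesis with no work. For the second hypothesis, I would argue that $X$ cannot contain $\ell_1$: indeed $\ell_1$ fails the $p$-co-Banach-Saks property for every $p\in(1,\infty)$ (its $\ell_1$-basis is a semi-normalized — even normalized — sequence, but it is not weakly null, so one must be slightly more careful), so the cleanest route is to recall that an asymptotically uniformly convex space is reflexive (being asymptotically uniformly convex forces the AUC modulus to be positive, and together with standard duality/James-type arguments this rules out $\ell_1$; alternatively, a space with a lower $\ell_p$-estimate or the $p$-co-Banach-Saks property on weakly null sequences, combined with reflexivity, excludes $\ell_1$ trivially since in a reflexive space every bounded sequence has a weakly convergent subsequence). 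I expect that the intended short justification is simply that asymptotically $p$-uniformly convex spaces are reflexive and hence do not contain $\ell_1$.

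With both hypotheses of Theorem \ref{upperpartQuan}(ii) in place, the conclusion $\alpha_Y(X)\le p/q$ is immediate. The main (and essentially only) obstacle is the bookkeeping around the $\ell_1$ exclusion: one must be sure that "asymptotically $p$-uniformly convex" genuinely rules out $\ell_1$, rather than merely ruling out $\ell_1$ as a block subspace or some weaker statement. Since the AUC modulus being positive is well known to imply reflexivity (this is classical; see the discussion of asymptotic moduli earlier in the paper and the references therein), this is a one-line remark. A concrete write-up would read roughly: "By Proposition \ref{lemmakalton}, $X$ has the $p$-co-Banach-Saks property. Moreover, $X$ is asymptotically uniformly convex, hence reflexive, so it does not contain $\ell_1$. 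The conclusion now follows from Theorem \ref{upperpartQuan}(ii)."
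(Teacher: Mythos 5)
Your overall strategy --- Proposition \ref{lemmakalton} to get the $p$-co-Banach-Saks property and then Theorem \ref{upperpartQuan}(ii) --- is the right one, but the step where you exclude $\ell_1$ contains a genuine error. Asymptotic uniform convexity does \emph{not} imply reflexivity: $\ell_1$ itself satisfies $\overline{\delta}_{\ell_1}(t)=t$ (given $x\in\partial B_{\ell_1}$, take $E$ to be the vectors supported past the essential support of $x$; then $\|x+th\|\approx 1+t$ for every $h\in\partial B_E$), so $\ell_1$ is asymptotically $p$-uniformly convex for every $p\in(1,\infty)$ while having the Schur property and being as non-reflexive as can be. It is asymptotic uniform \emph{smoothness} that rules out $\ell_1$ (as used in Corollary \ref{asympbansaks}), not convexity. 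Note also that the $p$-co-Banach-Saks property cannot save you here: since $\ell_1$ has no semi-normalized weakly null sequences, it satisfies that property vacuously, which is exactly why the hypothesis ``does not contain $\ell_1$'' in Theorem \ref{upperpartQuan}(ii) is not redundant. Your parenthetical alternative (``combined with reflexivity, excludes $\ell_1$ trivially'') is circular, since reflexivity is what you were trying to establish.

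The repair is a case split, which is what the statement of Theorem \ref{upperpartQuan} (and Remark \ref{remell1}) is set up for. If $X$ contains an isomorphic copy of $\ell_1$, then $X$ contains a sequence which is asymptotically $\ell_1$ (the image of the unit vector basis), so Theorem \ref{upperpartQuan}(i) gives $\alpha_Y(X)\leq 1/q\leq p/q$, the last inequality because $p>1$. If $X$ does not contain $\ell_1$, then Proposition \ref{lemmakalton} gives the $p$-co-Banach-Saks property and Theorem \ref{upperpartQuan}(ii) applies directly, yielding $\alpha_Y(X)\leq p/q$. With that dichotomy in place of the false reflexivity claim, your argument is complete and coincides with the paper's intended proof.
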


Asking the Banach space $X$ to have the $p$-co-Banach-Saks property in Theorem \ref{upperpartQuan} is actually too much, and we can weaken this condition by only requiring $X$ to have the almost $p$-co-Banach-Saks property. Precisely, we have the following.

\begin{thm}\label{almostco}
Let $1<p<q$. Let $X$ be an infinite dimensional Banach space  with the almost $p$-co-Banach-Saks property. Let $Y$ be a reflexive asymptotically $q$-uniformly smooth Banach space. Then $\alpha_Y(X)\leq p/q$. In particular, $X$ does not coarse Lipschitz embed into $Y$.
\end{thm}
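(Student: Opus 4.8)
The plan is to mimic the proof of Theorem~\ref{upperpart}, but to track the presence of the sequence $(\theta_j)_{j=1}^\infty$ coming from the almost $p$-co-Banach-Saks property, and to check that its subexponential growth does not spoil the final limit argument. First I would suppose $f\colon X\to Y$ is a coarse embedding and fix $C>0$ with $\omega_f(t)\le Ct+C$ for all $t\ge 0$. Since $X$ has the almost $p$-co-Banach-Saks property, $X$ contains a semi-normalized weakly null sequence; by extracting, I may take a normalized weakly null sequence $(x_n)_{n=1}^\infty$ with $\inf_{n\neq m}\|x_n-x_m\|>0$ (any infinite dimensional space fails to be generated by a weakly null semi-normalized sequence only if $\ell_1$ embeds, in which case one argues via Rosenthal; but here the definition of almost $p$-co-Banach-Saks is already stated for semi-normalized weakly null sequences, so such a sequence exists and passing to $(x_{2n-1}-x_{2n})$ keeps it weakly null — I would instead directly apply the property to that difference sequence as in Theorem~\ref{upperpart}). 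Define $\varphi_k\colon G_k(\N)\to X$ by $\varphi_k(n_1,\dots,n_k)=x_{n_1}+\dots+x_{n_k}$; these are bounded maps with $\mathrm{Lip}(f\circ\varphi_k)\le 3C$.

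Next I would run the Kalton--Randrianarivony stabilization: since $Y$ is reflexive and asymptotically uniformly smooth, it is asymptotically $q$-uniformly smooth for some $q$ (Theorem~1.2 of \cite{Ra}), and by Theorem~\ref{KR4.2} there is $K=K(Y)>0$ and, for each $k$, an infinite $\M_k\subset\N$ with $\mathrm{diam}(f\circ\varphi_k(G_k(\M_k)))\le 3KCk^{1/q}$. Diagonalizing so $\M_{k+1}\subset\M_k$ and then extracting a single infinite set $\M=(n_j)_{j=1}^\infty$ diagonalizing the $\M_k$'s, the almost $p$-co-Banach-Saks property applied to the weakly null sequence $(x_{n_{2j-1}}-x_{n_{2j}})_{j=1}^\infty$ gives, after a further subsequence, positive reals $(\theta_j)_{j=1}^\infty$ in $[1,\infty)$ with $\lim_j j^\alpha\theta_j^{-1}=\infty$ for every $\alpha>0$, such that for each $k$ there are $m_1<\dots<m_{2k}$ in $\M_k$ with $\|\sum_{j=1}^k (x_{m_{2j-1}}-x_{m_{2j}})\|\ge k^{1/p}\theta_k^{-1}$. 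Hence $\mathrm{diam}(\varphi_k(G_k(\M_k)))\ge k^{1/p}\theta_k^{-1}$, so $\mathrm{diam}(f\circ\varphi_k(G_k(\M_k)))\ge \rho_f(k^{1/p}\theta_k^{-1})$, and therefore
\[
\rho_f\big(k^{1/p}\theta_k^{-1}\big)\le 3KCk^{1/q}
\]
for all $k\in\N$.

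Now suppose toward a contradiction that $\alpha_Y(X)>p/q$; then there are $\alpha>p/q$, $L>0$, and a coarse embedding $f$ with $\rho_f(t)\ge L^{-1}t^\alpha-L$ for all $t\ge 0$. Plugging $t=k^{1/p}\theta_k^{-1}$ into this lower bound and combining with the displayed upper bound yields
\[
L^{-1}k^{\alpha/p}\theta_k^{-\alpha}-L\le 3KCk^{1/q}
\]
for all $k$. Rearranging, $k^{\alpha/p-1/q}\theta_k^{-\alpha}\le L(3KC+Lk^{-1/q})\le L(3KC+L)$ is bounded in $k$. But $\alpha/p-1/q>0$; writing $\beta:=\alpha/p-1/q>0$, the growth condition $\lim_k k^{\beta/\alpha}\theta_k^{-1}=\infty$ (valid since $\beta/\alpha>0$) gives $k^{\beta}\theta_k^{-\alpha}=(k^{\beta/\alpha}\theta_k^{-1})^\alpha\to\infty$, contradicting boundedness. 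Hence $\alpha_Y(X)\le p/q$, and in particular $X$ does not coarse Lipschitz embed into $Y$ (a coarse Lipschitz embedding would give $\alpha_Y(X)=1>p/q$). The only delicate point is bookkeeping: making sure the single diagonal set $\M$ and the one sequence $(\theta_j)$ work simultaneously for all $k$, i.e., that the $m_i$'s for level $k$ can be chosen inside $\M_k$ while the $\theta_j$'s are fixed once and for all — this is exactly the hypothesis "for all $k\le n_1<\dots<n_k$" in the definition of the almost $p$-co-Banach-Saks property, so it goes through as in Theorem~\ref{upperpart}; I expect this to be the main thing to state carefully rather than a genuine obstacle.
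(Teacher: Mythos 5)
Your argument is essentially the paper's own proof: the same stabilization via Theorem \ref{KR4.2} applied to $f\circ\varphi_k$, the same lower diameter estimate yielding $\rho_f(k^{1/p}\theta_k^{-1})\leq 3KCk^{1/q}$, and the same concluding computation using that $\lim_k k^{\beta}\theta_k^{-\alpha}=\infty$ for every $\beta>0$ forces $\alpha/p-1/q\leq 0$. The bookkeeping point you worry about at the end (one diagonal set and one sequence $(\theta_j)$ working for all $k$) is indeed harmless, since the estimate in the definition of the almost $p$-co-Banach-Saks property passes to further subsequences.

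The one genuine flaw is the opening claim that the almost $p$-co-Banach-Saks property by itself supplies a semi-normalized weakly null sequence. That property is universally quantified over such sequences, so it holds vacuously in any space with the Schur property (e.g.\ $\ell_1$), which has no semi-normalized weakly null sequence at all; your proof as written would have nothing to start from in that case, yet the theorem still has content there. The paper handles this with a case split: if $X$ contains $\ell_1$, it contains an asymptotically $\ell_1$ sequence and Theorem \ref{upperpartQuan}(i) already gives $\alpha_Y(X)\leq 1/q\leq p/q$; if $X$ does not contain $\ell_1$, Rosenthal's $\ell_1$-theorem produces the normalized weakly null separated sequence and your argument then proceeds verbatim. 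Your parenthetical gestures at Rosenthal but then retracts the case split; you should keep it.
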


\begin{proof}
Let $f:X\to Y$ be a coarse embedding and pick $C>0$ such that $\omega_f(t)\leq Ct+C$, for all $t\geq 0$. If $X$ contains $\ell_1$,  the result follows from Theorem \ref{upperpartQuan}(i). If $X$ does not contain $\ell_1$, we can pick a normalized weakly null sequence $(x_n)_{n=1}^\infty$ in $X$, with $\inf_{n\neq m}\|x_n-x_m\|>0$. By taking a subsequence of $(x_n)_{n=1}^\infty$ if necessary, pick  $(\theta_k)_{k=1}^\infty$ as in the definition of the almost $p$-co-Banach-Saks property. Define $\varphi_k:G_k(\N)\to X$ by letting $\varphi_k(n_1,\ldots ,n_k)=x_{n_1}+\ldots +x_{n_k}$, for all $(n_1,\ldots ,n_k)\in G_k(\N)$.

 Following the proof  of Theorem \ref{upperpart}, we get  that 
 
$$\rho_f(k^{1/p}\theta_k^{-1})\leq 3KC k^{1/q},$$\hfill

\noindent  for all $k\in\N$. Let $L>0$ and $\alpha>0$ be such that $\rho_f(t)\geq L^{-1}t^\alpha-L$, for all $t>0$. Then, 

$$k^{\alpha/p-1/q}\theta_k^{-\alpha}L^{-1}\leq 4KC ,$$\hfill

\noindent for big enough $k\in\N$. As  $\lim_{k\to \infty} k^\beta\theta^{-\alpha}_k=\infty$, for all $\beta>0$, we must have that $\alpha/p -1/q\leq 0$.
\end{proof}

\begin{remark}\label{remell2}
Let $(x_n)_{n=1}^\infty$ be a bounded sequence in a Banach space $X$ with the following property: there exists a sequence of positive reals $(\theta_j)_{j=1}^\infty$ in $[1,\infty)$ such that $\lim_{j\to\infty} j^\alpha \theta_j^{-1}=\infty$, for all $\alpha>0$, and

\begin{equation}\label{estrela}
k\theta^{-1}_k\leq \|\pm x_{n_1}+\ldots +\pm x_{n_k}\|,\tag{$*$}
\end{equation}\hfill

\noindent for all $n_1<\ldots< n_k\in \N$. The proof of Theorem \ref{almostco}  gives us that $\alpha_Y(X)\leq 1/q$, for any  reflexive asymptotically $q$-uniformly smooth Banach space  $Y$, with $q>1$. 
\end{remark}

Let $q>1$,  and let $(E_n)_{n=1}^\infty$  be a sequence of finite dimensional Banach spaces.  Let $\mathcal{E}$ be a $1$-unconditional basic sequence. Notice that, if $\mathcal{E}$ generates a reflexive asymptotically $q$-uniformly smooth Banach space, then $(\oplus_n E_n)_{\mathcal{E}}$ is also reflexive and asymptotically $q$-uniformly smooth. Hence, Theorem \ref{upperpartQuan} and  Theorem \ref{almostco} gives us the following corollary.

\begin{cor}\label{memata}
Let $1<p<q$, and let $(E_n)_{n=1}^\infty$  be a sequence of finite dimensional Banach spaces. Let $\mathcal{E}$ be a $1$-unconditional basic sequence generating a reflexive asymptotically $q$-uniformly smooth Banach space. The following holds.

\begin{enumerate}[(i)]
\item If $X$ contains a sequence with Property $($\ref{estrela}$)$, then $\alpha_{(\oplus_nE_n)_\mathcal{E}}(X)\leq 1/q$.
\item If $X$ is an infinite dimensional Banach space  with the almost $p$-co-Banach-Saks property, then $\alpha_{(\oplus_nE_n)_\mathcal{E}}(X)\leq p/q$.
\end{enumerate}

\noindent In particular, $X$ does not coarse Lipschitz embed into $(\oplus_nE_n)_\mathcal{E}$.
\end{cor}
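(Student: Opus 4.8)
The plan is to reduce the statement to Theorem \ref{upperpartQuan} and Theorem \ref{almostco} by verifying that the hypotheses on the target space carry over to the sum $(\oplus_n E_n)_{\mathcal{E}}$. First I would record the key stability observation: if $\mathcal{E} = (e_n)_{n=1}^\infty$ is a $1$-unconditional basic sequence generating a reflexive, asymptotically $q$-uniformly smooth Banach space $E$, and each $E_n$ is finite dimensional, then $Y \vcentcolon= (\oplus_n E_n)_{\mathcal{E}}$ is again reflexive and asymptotically $q$-uniformly smooth. Reflexivity is standard: a sum $(\oplus_n E_n)_{\mathcal{E}}$ with each $E_n$ finite dimensional is reflexive precisely when the basis $\mathcal{E}$ is boundedly complete and shrinking, which holds here since $E$ is reflexive with a $1$-unconditional (hence unconditional) basis. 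For the asymptotic uniform smoothness estimate, one checks directly from the definition of $\overline{\rho}_Y$: given a norm-one $y = (y_n)_n \in Y$ and $\eps > 0$, there is $N$ with $\|\sum_{n > N}\|y_n\|e_n\|_E < \eps$; taking the finite-codimensional subspace of $Y$ consisting of vectors supported on coordinates $> M$ for $M$ large (and using that each $E_n$, $n \le M$, is finite dimensional so contributes nothing asymptotically), a perturbation of a norm-one $h$ supported there lies essentially in the "tail" of $E$, and the estimate $\|y + th\|_Y \le 1 + Cq\cdot t^q + (\text{error})$ follows from the corresponding estimate $\overline{\rho}_E(t) \le Ct^q$ applied to the images $\sum_n \|y_n\|e_n$ and $\sum_n \|h_n\| e_n$ in $E$, together with the $1$-unconditionality of $\mathcal{E}$. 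Thus $\overline{\rho}_Y(t) \le C't^q$ on $[0,1]$ for a suitable constant.

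With this stability fact in hand, the corollary is immediate. For part (i): if $X$ contains a sequence with Property (\ref{estrela}), then by Remark \ref{remell2} (which notes that the proof of Theorem \ref{almostco} applies verbatim), applied with the reflexive asymptotically $q$-uniformly smooth space $Y = (\oplus_n E_n)_{\mathcal{E}}$, we conclude $\alpha_Y(X) \le 1/q$. For part (ii): if $X$ is infinite dimensional with the almost $p$-co-Banach-Saks property, then Theorem \ref{almostco} applied to the same $Y$ yields $\alpha_Y(X) \le p/q$. Since $p/q < 1$ (and $1/q < 1$), the "in particular" clause follows because a coarse Lipschitz embedding $f : X \to Y$ would give $\rho_f(t) \ge L^{-1}t - L$, i.e. it would witness $\alpha_Y(X) = 1$, contradicting the bounds just obtained.

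The main obstacle is the asymptotic $q$-uniform smoothness of $Y = (\oplus_n E_n)_{\mathcal{E}}$: one must be slightly careful that the finite dimensional "heads" $E_1 \oplus \ldots \oplus E_M$ do not spoil the estimate. The point is that $\overline{\rho}_Y$ is computed using an infimum over finite-codimensional subspaces $F \le Y$, so we are free to pass to the tail $\{y : y_n = 0 \text{ for } n \le M\}$, and on the tail the norm of $Y$ is governed entirely by the tail of $\mathcal{E}$ in $E$, where the estimate $\overline{\rho}_E(t) \le Ct^q$ is available — uniformly in $M$, since $\mathcal{E}$ generates $E$. One also uses here that for a norm-one $y \in Y$ the contribution of its head can be made as small as we like after a small perturbation of $y$ (replacing $y$ by a finitely supported vector), which only changes $\overline{\rho}_Y$ by a controlled amount; alternatively, one invokes Proposition \ref{lemmadgj} together with the observation that $\mathcal{E}$, hence $(\oplus_n E_n)_{\mathcal{E}}$, satisfies an upper $\ell_q$-estimate whenever $E$ does. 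Everything else is a direct citation of the theorems already proved.
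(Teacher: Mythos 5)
Your proposal is correct and follows the same route as the paper: the paper's entire argument is the one-line observation, stated just before the corollary, that $(\oplus_n E_n)_{\mathcal{E}}$ inherits reflexivity and asymptotic $q$-uniform smoothness from the space generated by $\mathcal{E}$, after which (i) and (ii) are direct citations of Remark \ref{remell2} (equivalently Theorem \ref{upperpartQuan}) and Theorem \ref{almostco}. Your fleshed-out verification of the stability fact (passing to tail subspaces, which is legitimate because each $E_n$ is finite dimensional) is a correct expansion of what the paper leaves implicit.
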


\begin{proof}[Proof of Theorem \ref{corcorQuan}]
(i) As noticed in Subsection \ref{submixtsi}, $T^{p}$ has the $p$-co-Banach-Saks property, and is asymptotically $p$-uniformly smooth, for all $p\in (1,\infty)$. Therefore, as $T^{p}$ is reflexive (see \cite{OSZ}, Proposition 5.3(b)), for all $p\in [1,\infty)$, the result follows from Theorem \ref{upperpartQuan} (or Corollary \ref{memata}).

(ii) For any $p\in (1,\infty)$, $S^p$ has the almost $p$-co-Banach-Saks property and is asymptotically $p$-uniformly smooth. By Theorem 8 and Proposition 2(2) of \cite{CKKM}, $S^p$ is  reflexive, for all $p\in [1,\infty)$. So, the result follows from Corollary \ref{memata}.
\end{proof}

A Banach space $X$ is called \emph{hereditarily indecomposable} if none of its subspaces can be decomposed as a sum of two infinite dimensional Banach spaces. In Chapter 5 of \cite{D}, for each $p\in (1,\infty)$,  Dew constructed a hereditarily indecomposable space $\mathfrak{X}_p$ with a basis $(e_n)_{n=1}^\infty$ satisfying the following properties: (i) $\mathfrak{X}_p$ is reflexive, (ii) the base $(e_n)_{n=1}^\infty$  satisfies an upper $\ell_p$-estimate with constant $1$, and (iii) if $(x_n)_{n=1}^\infty$ is a block sequence of $(e_n)_{n=1}^\infty$, then, for all $n\in\N$,

$$\Big\|\sum_{j=1}^nx_j\Big\|\geq {f(n)^{-1/p}}\Big(\sum_{j=1}^n\|x_j\|^p\Big)^{1/p},$$\hfill

\noindent where $f:\N\to [0,\infty)$ is  a function such	 that, among other properties,  $\lim_{n\to \infty}n^\alpha f(n)^{-1}\allowbreak =\infty$, for all $\alpha>0$. In particular, $\mathfrak{X}_p$ has the almost $p$-co-Banach-Saks property, and it is asymptotically $p$-uniformly smooth. This, together with Theorem \ref{almostco}, gives us the following.

\begin{cor}\label{herdind}
 Let $1<p<q$. Then $\alpha_{\mathfrak{X}^q}(\mathfrak{X}^p)\leq p/q$. In particular, $\mathfrak{X}_p$ does not coarse Lipschitz embeds into $\mathfrak{X}_q$.
\end{cor}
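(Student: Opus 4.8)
The statement is an instance of Theorem \ref{almostco} applied with $X=\mathfrak{X}_p$ and $Y=\mathfrak{X}_q$, so the plan is to verify the two hypotheses of that theorem for Dew's spaces and then invoke it directly. First I would record that $Y=\mathfrak{X}_q$ is reflexive (property (i) of Dew's construction) and that its basis $(e_n)_{n=1}^\infty$ satisfies an upper $\ell_q$-estimate with constant $1$ (property (ii)); hence, by Proposition \ref{lemmadgj}, $\mathfrak{X}_q$ is asymptotically $q$-uniformly smooth. This gives the ambient-space hypothesis of Theorem \ref{almostco}.

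\textbf{Key steps.} The substantive point is that $X=\mathfrak{X}_p$ has the almost $p$-co-Banach-Saks property. I would argue as follows. Let $(x_n)_{n=1}^\infty$ be a semi-normalized weakly null sequence in $\mathfrak{X}_p$. Since $\mathfrak{X}_p$ has a basis, a standard gliding-hump/small-perturbation argument lets us pass to a subsequence which is $2$-equivalent (say) to a normalized block sequence $(y_n)_{n=1}^\infty$ of $(e_n)_{n=1}^\infty$. Property (iii) of Dew's space applied to $(y_n)$ yields
$$\Big\|\sum_{j=1}^k y_j\Big\|\geq f(k)^{-1/p}\Big(\sum_{j=1}^k\|y_j\|^p\Big)^{1/p}=f(k)^{-1/p}k^{1/p},$$
and transferring back through the equivalence we get, after taking a further subsequence, a constant multiple of $k^{1/p}f(k)^{-1/p}$ as a lower bound for $\|x_{n_1}+\ldots+x_{n_k}\|$ whenever $k\leq n_1<\ldots<n_k$. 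Setting $\theta_k$ to be a fixed bounded multiple of $f(k)^{1/p}$, the growth requirement $\lim_k k^\alpha\theta_k^{-1}=\infty$ for all $\alpha>0$ follows from Dew's condition $\lim_n n^\beta f(n)^{-1}=\infty$ for all $\beta>0$ (take $\beta=\alpha p$ and raise to the power $1/p$). This establishes the almost $p$-co-Banach-Saks property. Theorem \ref{almostco} then gives $\alpha_{\mathfrak{X}^q}(\mathfrak{X}^p)\leq p/q$, and since $p/q<1$ a coarse Lipschitz embedding (which would force compression exponent $1$) is impossible.

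\textbf{Main obstacle.} The only place requiring care is the passage from an arbitrary semi-normalized weakly null sequence to a block sequence and the bookkeeping of constants so that they can be absorbed into $(\theta_k)$ without destroying the asymptotic growth condition; everything else is a citation (of Dew's properties (i)--(iii) and of Theorem \ref{almostco}). Note also that $\mathfrak{X}_p$ not containing $\ell_1$ is automatic here since it is asymptotically $p$-uniformly smooth, so the $\ell_1$-dichotomy inside the proof of Theorem \ref{almostco} lands on the correct side without extra work.
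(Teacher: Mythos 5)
Your proposal is correct and follows exactly the paper's route: the paper likewise records Dew's properties (i)--(iii), deduces that $\mathfrak{X}_q$ is reflexive and asymptotically $q$-uniformly smooth and that $\mathfrak{X}_p$ has the almost $p$-co-Banach-Saks property (with $\theta_k$ essentially $f(k)^{1/p}$), and then cites Theorem \ref{almostco}. Your only addition is to spell out the standard perturbation from weakly null sequences to block sequences, which the paper leaves implicit.
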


\begin{problem}
Let $1\leq p<q$.  Does  $\alpha_{T^q}(T^p)=\alpha_{S^q}(S^p)=p/q$? If $p>1$, does $\alpha_{\mathfrak{X}^q}(\mathfrak{X}^p)= p/q$ hold?
\end{problem}

\begin{remark}\label{cotypeT} It is worth noticing that, if $p>\max\{q,2\}$, then $\alpha_{T^q}(T^p)=0$. Indeed, for all $r\geq 2$, $T^r$ has cotype $r+\eps$ for all $\eps>0$ (see \cite{DJT}, page 305). On the other hand, if $r<2$, then $T^r$ has cotype $2$. This follows from the fact that, for any $\eps>0$, $T^r$ has an equivalent norm satisfying a lower $\ell_{(r+\eps)}$-estimate (we explain this in the proof of Corollary \ref{notell2} below), then, by  Theorem 1.f.7 and Proposition 1.f.3(i) of \cite{LT}, $T^r$ has cotype $2$. Similarly, by Theorem 1.f.7 and Proposition 1.f.3(ii), $T^r$ has non trivial type, for all $r\in (1,\infty)$. By Theorem 1.11 of \cite{MN}, if a Banach space $X$ coarsely embeds into a Banach space $Y$ with non trivial type, then 

$$\inf\{q\in [2,\infty)\mid X\text{ has cotype }q\}\leq\inf \{q\in [2,\infty)\mid Y\text{ has cotype }q\}.$$\hfill

\noindent Therefore, we conclude that  $T^p$ does not coarsely embed into $T^q$, if $p>\max\{q,2\}$. So,  $\alpha_{T^q}(T^p)=0$.
\end{remark}

\begin{problem}
Let $1\leq q<p\leq 2$. What can we say about $\alpha_{T^q}(T^p)$?
\end{problem}

We finish this section with an  application of Theorem \ref{upperpartQuan}, Theorem \ref{almostco}, and Theorem 3.4 of \cite{AB}. By looking at the proof of Theorem 3.4 of  \cite{AB}, one can easily see that the authors proved a stronger result than the one stated in their paper. Precisely, the authors proved the following.

\begin{thm}\label{baudieralbiac}
Let $0<p<q$. There exist maps $(\psi_{j}:\R\to \R)_{j=1}^\infty$ such that, for all $x,y\in \R$,

$$A_{p,q}|x-y|^p\leq \max\{|\psi_{j}(x)-\psi_{j}(y)|^q\mid j\in\mathbb{N}\}$$\hfill

\noindent and 

$$\sum_{j\in\mathbb{N}}|\psi_{j}(x)-\psi_{j}(y)|^q\leq B_{p,q}|x-y|^p,$$\hfill

\noindent where $A_{p,q},B_{p,q}$ are positive constants.
\end{thm}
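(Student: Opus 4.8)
The plan is to exhibit these maps explicitly as the coordinate functions of a snowflake-type embedding of the real line. Writing $\Phi=(\psi_{j})_{j}\colon\R\to\R^{\N}$, the two asserted inequalities say precisely that
$$\sup_{j}|\psi_{j}(x)-\psi_{j}(y)|\gtrsim|x-y|^{p/q}\qquad\text{and}\qquad\Big(\sum_{j}|\psi_{j}(x)-\psi_{j}(y)|^{q}\Big)^{1/q}\lesssim|x-y|^{p/q},$$
that is, that $\Phi$ embeds the metric space $(\R,|x-y|^{p/q})$ --- a genuine metric, since $0<p/q<1$ --- with a Lipschitz upper estimate measured in the $\ell_{q}$-norm of differences and a Lipschitz lower estimate measured only in the sup-norm of differences. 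The building blocks will be rescaled, shifted ``sawtooth'' functions adapted to dyadic scales: for $n\in\mathbb{Z}$ and $s\in\R$ put $f_{n,s}(t)=\mathrm{dist}(t,\,2^{n}\mathbb{Z}+s)$, a $1$-Lipschitz, $2^{n}$-periodic function with values in $[0,2^{n-1}]$ which, between consecutive points of its breakpoint set $2^{n-1}\mathbb{Z}+s$, is affine with slope $\pm1$. At scale $n$ I will use the two shifts $s_{n,0}=0$ and $s_{n,1}=2^{n-2}$, put $\psi_{n,i}=2^{n(p/q-1)}f_{n,s_{n,i}}$, and finally relabel the countable family $(\psi_{n,i})_{n\in\mathbb{Z},\,i\in\{0,1\}}$ as $(\psi_{j})_{j=1}^{\infty}$.

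For the lower estimate, fix $x\neq y$, put $r=|x-y|$, and let $n$ be minimal with $2^{n-2}>r$, so that $r<2^{n-2}\le 2r$, hence $4r<2^{n}\le 8r$. In the circle $\R/2^{n-1}\mathbb{Z}$ (circumference $2^{n-1}$) the breakpoint sets $2^{n-1}\mathbb{Z}+s_{n,0}$ and $2^{n-1}\mathbb{Z}+s_{n,1}$ become a pair of antipodal points, while the open interval $(x,y)$ projects to an arc of length $r<2^{n-2}$, i.e.\ shorter than a half-circle; such an arc contains at most one of two antipodal points. Thus for some $i\in\{0,1\}$ no breakpoint of $f_{n,s_{n,i}}$ lies strictly between $x$ and $y$, so $f_{n,s_{n,i}}$ is affine with slope $\pm1$ on $[x,y]$ and $|f_{n,s_{n,i}}(x)-f_{n,s_{n,i}}(y)|=r$. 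Therefore $|\psi_{n,i}(x)-\psi_{n,i}(y)|=2^{n(p/q-1)}r\ge 8^{p/q-1}r^{p/q}$ (using $2^{n}\le 8r$ and $p/q-1<0$), which is the first inequality with $A_{p,q}=8^{p-q}$.

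For the upper estimate I use only that each $f_{n,s}$ is $1$-Lipschitz and bounded by $2^{n-1}$, whence $|\psi_{n,i}(x)-\psi_{n,i}(y)|\le 2^{n(p/q-1)}\min\{r,2^{n-1}\}$ for $r=|x-y|$. Summing $q$-th powers over the whole family,
$$\sum_{j}|\psi_{j}(x)-\psi_{j}(y)|^{q}\ \le\ 2\sum_{n\in\mathbb{Z}}2^{n(p-q)}\min\{r,2^{n-1}\}^{q},$$
which I split at the scale $n_{0}$ determined by $2^{n_{0}-1}\le r<2^{n_{0}}$. For $n\le n_{0}$ one has $\min\{r,2^{n-1}\}=2^{n-1}$, so the general term is a fixed multiple of $2^{np}$ and the tail $\sum_{n\le n_{0}}$ is a convergent geometric series (this is where $p>0$ is used) of size $\asymp r^{p}$; for $n\ge n_{0}+1$ one has $\min\{r,2^{n-1}\}=r$, so the general term is $2^{n(p-q)}r^{q}$ and the tail $\sum_{n\ge n_{0}+1}$ is again a convergent geometric series (this is where $p<q$ is used) of size $\asymp r^{p-q}\cdot r^{q}=r^{p}$. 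Collecting constants gives $\sum_{j}|\psi_{j}(x)-\psi_{j}(y)|^{q}\le B_{p,q}|x-y|^{p}$ with an explicit $B_{p,q}$.

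The only step here that is not a routine geometric-series estimate is the combinatorial ingredient in the lower bound: one must be sure that, at the chosen dyadic scale, one of finitely many shifts (two suffice) of the sawtooth is affine across the given pair $\{x,y\}$, and this is exactly the elementary fact that an arc of $\R/2^{n-1}\mathbb{Z}$ shorter than a half-circle misses one of two antipodal points. Granting that, the hypothesis $0<p<q$ enters only through the convergence of the two geometric series above --- one needs $p>0$ at the small scales and $p<q$ at the large scales --- and through $0<p/q<1$, which makes $|x-y|^{p/q}$ a metric. This reproduces, with explicit constants, the construction implicit in the proof of Theorem~3.4 of \cite{AB}.
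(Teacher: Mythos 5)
Your proof is correct. The paper itself offers no argument for this statement---it simply observes that the proof of Theorem 3.4 of \cite{AB} yields it---and your explicit dyadic sawtooth construction (two shifted, rescaled distance-to-lattice functions per scale, with the antipodal-point observation giving the lower bound and the two geometric series giving the upper bound) is precisely the Assouad-type snowflake embedding underlying that result, so you have in effect supplied, correctly and with explicit constants, the details the paper leaves to the reference.
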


\begin{prop}\label{quantequa9}
Let $1\leq p<q$. There exists a map $f:T^p\to (\oplus T^q)_{T^q}$  which is simultaneously a coarse and a uniform embedding such that $\rho_f(t)\geq Ct^{p/q}$, for some $C>0$. In particular,  $\alpha_{(\oplus T^q)_{T^q}}(T^p)= p/q$.
\end{prop}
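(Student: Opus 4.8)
The plan is to build the embedding $f \colon T^p \to (\oplus T^q)_{T^q}$ coordinatewise using the maps $(\psi_j)_{j=1}^\infty$ from Theorem \ref{baudieralbiac}. Write a generic element of $T^p$ as $x = \sum_{n} x_n e_n$ with respect to the standard basis $(e_n)_{n=1}^\infty$. For each $n$, apply the family $(\psi_j)_{j=1}^\infty$ to the real coordinate $x_n$; this produces, for each $j$, a vector $\sum_n \psi_j(x_n) e_n$. I would first arrange (after subtracting a constant, i.e. replacing $\psi_j$ by $\psi_j - \psi_j(0)$, which does not affect the two-sided inequalities) that $\psi_j(0) = 0$ for all $j$, so that these vectors have finite support whenever $x \in c_{00}$. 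Then define $f(x) = \big( \sum_n \psi_j(x_n)e_n \big)_{j=1}^\infty$, viewed as an element of $(\oplus T^q)_{T^q}$, where the outer $T^q$-sum is indexed by $j$ and the $j$-th inner copy of $T^q$ carries the vector $\sum_n \psi_j(x_n) e_n$. The first task is to verify $f$ is well-defined, i.e. that $f(x) \in (\oplus T^q)_{T^q}$ for every $x \in T^p$, and that $f$ extends continuously from $c_{00}$ to all of $T^p$.

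The heart of the argument is the two-sided norm estimate. For the upper bound one uses the right-hand inequality of Theorem \ref{baudieralbiac}: since $T^q$ is $q$-convex with constant $1$ (it is a $q$-convexification), and the $j$-th component of $f(x) - f(y)$ is $\sum_n (\psi_j(x_n) - \psi_j(y_n)) e_n$, $q$-convexity of the basis of $T^q$ bounds $\sum_j \|f(x)_j - f(y)_j\|_{T^q}^q$ above by $\big\| \sum_n \big( \sum_j |\psi_j(x_n)-\psi_j(y_n)|^q \big)^{1/q} e_n \big\|_{T^q}^q \le \big\| \sum_n B_{p,q}^{1/q} |x_n - y_n|^{p/q} e_n \big\|_{T^q}^q = B_{p,q}\,\|x-y\|_{T^p}^p$, using that $\|(x-y)^p\|_T = \|x-y\|_{T^p}^p$ and the definition of the $p$-convexification norm. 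So $\|f(x)-f(y)\|_{(\oplus T^q)_{T^q}} \le B_{p,q}^{1/q}\|x-y\|_{T^p}^{p/q}$, which is an expansion bound of the required (subaffine, hence coarse and uniformly continuous near $0$) type. For the lower bound, the outer norm dominates the $\ell_q$-type expression in the components only weakly, but one can instead use that the $(\oplus T^q)_{T^q}$-norm dominates the norm of any single coordinate block, together with the $\max$ on the left of Theorem \ref{baudieralbiac}: for each $n$ there is an index $j(n)$ achieving (up to the constant $A_{p,q}$) the maximum, and I would estimate $\|f(x)-f(y)\|$ from below by the norm of a suitably chosen diagonal-type vector assembled from these best coordinates, yielding $\rho_f(t) \ge C t^{p/q}$ for some $C > 0$; comparing with the upper bound, $f$ is simultaneously a coarse and a uniform embedding.

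Having produced such an $f$, the equality $\alpha_{(\oplus T^q)_{T^q}}(T^p) = p/q$ follows immediately: the embedding just constructed gives $\alpha_{(\oplus T^q)_{T^q}}(T^p) \ge p/q$, and the reverse inequality $\alpha_{(\oplus T^q)_{T^q}}(T^p) \le p/q$ is exactly an instance of Corollary \ref{memata}(ii) — indeed $(\oplus T^q)_{T^q}$ is of the form $(\oplus_n E_n)_{\mathcal E}$ with $\mathcal E$ the (reflexive, asymptotically $q$-uniformly smooth) basis of $T^q$ and each $E_n$ a copy of a finite-dimensional space, so one would either apply Corollary \ref{memata} directly after noting $(\oplus T^q)_{T^q}$ is itself reflexive and asymptotically $q$-uniformly smooth (it satisfies an upper $\ell_q$-estimate with constant $1$), combined with the fact from Subsection \ref{submixtsi} that $T^p$ has the $p$-co-Banach-Saks property and (by the Remark there) does not contain $\ell_1$. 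When $p = 1$, $T^1 = T$ has a weakly null semi-normalized sequence with a lower $\ell_1$-block estimate is false — instead $T^1 = T$ still does not contain $\ell_1$ and one uses Theorem \ref{upperpartQuan}(ii) with the $1$-co-Banach-Saks-type estimate, or simply reads off $\alpha \le 1/q$ from the same diameter argument; in any case the upper bound $p/q$ holds for $1 \le p < q$.

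The main obstacle I anticipate is the lower estimate for $\rho_f$. The outer $T^q$-norm on $(\oplus T^q)_{T^q}$ is genuinely larger than an $\ell_q$-sum, so I cannot simply invert the $q$-convexity computation used for the upper bound; I need to locate, for each coordinate $n$, the component $j(n)$ witnessing the $\max$ in Theorem \ref{baudieralbiac} and then show that the vector formed by placing $(\psi_{j(n)}(x_n) - \psi_{j(n)}(y_n)) e_n$ into the $j(n)$-th copy of $T^q$ (a legitimate element whose norm is bounded below because distinct $n$'s with the same $j(n)$ still contribute a $T^q$-norm that dominates the max norm, which dominates a single coordinate) has $(\oplus T^q)_{T^q}$-norm at least a constant times $\|\sum_n |x_n-y_n|^{p/q} e_n\|_{T^q} = \|x-y\|_{T^p}^{p/q}$. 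Making this selection argument precise — in particular handling collisions where many coordinates share the same optimal index $j$ — is the delicate point; everything else is a routine combination of the $p$-convexification formalism, $q$-convexity of $T^q$, and the cited embedding theorem.
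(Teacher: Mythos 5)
Your construction transposes the two indices relative to the one that works, and this is not a cosmetic difference: it breaks both halves of the two-sided estimate. You place the index $j$ from Theorem \ref{baudieralbiac} outside and the coordinate index $n$ inside, so the $j$-th component of $f(x)-f(y)$ is $\sum_n(\psi_j(x_n)-\psi_j(y_n))e_n$. For the upper bound you would then need
$\sum_j\big\|\sum_n(\psi_j(x_n)-\psi_j(y_n))e_n\big\|_{T^q}^q\le \big\|\sum_n\big(\sum_j|\psi_j(x_n)-\psi_j(y_n)|^q\big)^{1/q}e_n\big\|_{T^q}^q$, but this is $q$-\emph{concavity}, not $q$-convexity: $q$-convexity gives exactly the reverse inequality. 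And $T^q$ cannot be $q$-concave, since a lower $\ell_q$-estimate on its basis together with its upper $\ell_q$-estimate would make every normalized block basis equivalent to the unit vector basis of $\ell_q$, contradicting that $T^q$ contains no $\ell_r$. For the lower bound, the ``collision'' issue you flag is a genuine obstruction rather than a technicality: after choosing the witnessing indices $j(n)$ and setting $N_j=\{n\mid j(n)=j\}$, you must bound $\big\|\sum_j\big\|\sum_{n\in N_j}|x_n-y_n|^{p/q}e_n\big\|_{T^q}e_j\big\|_{T^q}$ from below by $\big\|\sum_n|x_n-y_n|^{p/q}e_n\big\|_{T^q}$, i.e.\ control the Tsirelson norm under an arbitrary regrouping and re-indexing of coordinates; the Tsirelson basis is not subsymmetric, so no such uniform comparison is available.

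The repair is simply to swap the indices, which is what the paper does: $f(x)=\big((\psi_j(x_n)-\psi_j(0))_{j=1}^\infty\big)_{n=1}^\infty$, so the $n$-th outer component is the vector $\sum_j(\psi_j(x_n)-\psi_j(y_n))e_j$ in the inner copy of $T^q$. Since $\|\cdot\|_0\le\|\cdot\|_{T^q}\le\|\cdot\|_{\ell_q}$, that component's norm is squeezed between $\max_j|\psi_j(x_n)-\psi_j(y_n)|$ and $\big(\sum_j|\psi_j(x_n)-\psi_j(y_n)|^q\big)^{1/q}$, hence between $A_{p,q}^{1/q}|x_n-y_n|^{p/q}$ and $B_{p,q}^{1/q}|x_n-y_n|^{p/q}$ by Theorem \ref{baudieralbiac}; then $1$-unconditionality of the outer basis and the identity $\big\|\sum_n|x_n-y_n|^{p/q}e_n\big\|_{T^q}=\|x-y\|_{T^p}^{p/q}$ give both bounds simultaneously, with no convexity or concavity of $T^q$ needed. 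Your derivation of the reverse inequality $\alpha_{(\oplus T^q)_{T^q}}(T^p)\le p/q$ from Corollary \ref{memata} (using part (i), via the asymptotically $\ell_1$ basis of $T$, when $p=1$) is fine and matches the paper.
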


\begin{proof}
Let $(\psi_j)_{j=1}^\infty$, $A_{p,q}$, and $B_{p,q}$ be given by Theorem  \ref{baudieralbiac}. Define $f:T^p\to (\oplus T_q)_{T^q}$  by letting

$$f(x)=\Big((\psi_{j}(x_n)-\psi_{j}(0))_{j=1}^\infty\Big)_{n=1}^\infty,$$\hfill

\noindent for all $x=(x_n)_{n=1}^\infty\in T^p$. One can easily check that $f$ satisfies

$$A_{p,q}^{1/q}\|x-y\|^{p/q}\leq \|f(x)-f(y)\|\leq B_{p,q}^{1/q} \|x-y\|^{p/q},$$\hfill

\noindent for all $x,y\in T^p$.

As $T^q$ is $q$-convex, it is easy to see that $(\oplus T^q)_{T^q}$ is  asymptotic $q$-uniformly smooth. Hence, as  $(\oplus T^q)_{T^q}$ is reflexive, we conclude that $\alpha_{(\oplus T^q)_{T^q}}(T^p)= p/q$.
\end{proof}

\begin{cor}
$T$ strongly embeds into a super-reflexive Banach space.
\end{cor}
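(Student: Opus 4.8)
The plan is to deduce this immediately from Proposition \ref{quantequa9}. Taking $p=1$ and any $q\in(1,\infty)$ in that proposition, we obtain a map $f\colon T^1\to(\oplus T^q)_{T^q}$ which is simultaneously a coarse and a uniform embedding, with $\rho_f(t)\geq Ct^{1/q}$ for some $C>0$; recall $T^1=T$ by definition of the $p$-convexification. In particular $f$ is a strong embedding in the sense of being both coarse and uniform. It therefore remains only to observe that the target space $(\oplus T^q)_{T^q}$ is super-reflexive.

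First I would recall why $(\oplus T^q)_{T^q}$ is super-reflexive. Since $T^q$ is $q$-convex with constant $1$, its basis satisfies an upper $\ell_q$-estimate; the sum $(\oplus T^q)_{T^q}$ then inherits an upper $\ell_q$-estimate as well, since the outer norm is that of $T^q$. Dually, one checks that $(\oplus T^q)_{T^q}$ also satisfies a lower $\ell_r$-estimate for some $r<\infty$: indeed $T^q$ admits an equivalent norm with a lower $\ell_{q+\eps}$-estimate (as indicated in Remark \ref{cotypeT} and reproved in the proof of Theorem \ref{notell2}), and hence so does the sum. A Banach space whose $1$-unconditional basis satisfies both an upper $\ell_q$-estimate and a lower $\ell_s$-estimate (for $1<q\le s<\infty$) is uniformly convexifiable and uniformly smoothable, hence super-reflexive. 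Alternatively, one can simply cite that $T^q$ is super-reflexive together with the fact that an $\mathcal{E}$-sum of uniformly convex spaces over a uniformly convex $\mathcal{E}$ is uniformly convex.

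Putting these together: $T$ embeds, simultaneously coarsely and uniformly (indeed with compression at least $Ct^{1/q}$), into the super-reflexive space $(\oplus T^q)_{T^q}$. This is exactly the assertion of the corollary. The main point worth flagging is that the interesting content is not in this corollary itself but in Proposition \ref{quantequa9} (and the underlying Theorem \ref{baudieralbiac} of \cite{AB}): a priori, since $T$ is reflexive but not super-reflexive, it is not obvious that $T$ admits \emph{any} strong embedding into a super-reflexive space, and the Mazur-map-type construction of $f$ via the functions $(\psi_j)$ is what makes it work. There is no real obstacle in the final step — it is just the observation that super-reflexivity of the codomain has already been established en route.

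\begin{proof}
Apply Proposition \ref{quantequa9} with $p=1$ and any fixed $q\in(1,\infty)$. Since $T^1=T$, we obtain a map $f\colon T\to(\oplus T^q)_{T^q}$ which is simultaneously a coarse embedding and a uniform embedding, and which satisfies $\rho_f(t)\geq Ct^{1/q}$ for some $C>0$; in particular $f$ is a strong embedding. It remains to note that $(\oplus T^q)_{T^q}$ is super-reflexive. Indeed, $T^q$ is $q$-convex with constant $1$, so its basis satisfies an upper $\ell_q$-estimate, and, as explained in the proof of Theorem \ref{notell2}, $T^q$ also admits an equivalent norm satisfying a lower $\ell_{q+\eps}$-estimate for every $\eps>0$; both properties pass to the $T^q$-sum $(\oplus T^q)_{T^q}$, whose outer norm is again that of $T^q$. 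A Banach space with a $1$-unconditional basis satisfying an upper $\ell_q$-estimate and a lower $\ell_s$-estimate for some $1<q\leq s<\infty$ is uniformly convexifiable, hence super-reflexive. Thus $T$ strongly embeds into the super-reflexive space $(\oplus T^q)_{T^q}$.
\end{proof}
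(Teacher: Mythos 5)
Your proposal is correct and follows essentially the same route as the paper: invoke Proposition \ref{quantequa9} with $p=1$ (the paper fixes $q=2$, you allow any $q>1$) and then verify that $(\oplus T^q)_{T^q}$ is super-reflexive, which the paper does by citing that an $\mathcal{E}$-sum of super-reflexive spaces over a super-reflexive $1$-unconditional $\mathcal{E}$ is super-reflexive — the alternative you yourself mention. Your convexity/concavity justification also goes through (upper $\ell_q$- and lower $\ell_s$-estimates with $1<q\le s<\infty$ yield $r$-convexity and $t$-concavity for suitable $r>1$, $t<\infty$, hence uniform convexifiability), so there is no gap.
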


\begin{proof}
It is easy to check that $(\oplus T^2)_{T^2}$ is super-reflexive. Indeed, super-reflexivity is equivalent to a uniformly convex renorming. Hence, if $\mathcal{E}$ is a $1$-unconditional basis generating a super-reflexive space, and $X$ is a super-reflexive space, then so is $(\oplus X)_\mathcal{E}$ (see \cite{LT}, page 100). 
\end{proof}

Similarly as above, we get the following proposition.

\begin{prop}\label{quantequa9}
Let $1\leq p<q$. There exists a map $f:S^p\to  (\oplus S^q)_{S^q}$ which is simultaneously a coarse and a uniform embedding such that $\rho_f(t)\geq Ct^{p/q}$, for some $C>0$. In particular,   $\alpha_{(\oplus S^q)_{S^q}}(S^p)= p/q$.
\end{prop}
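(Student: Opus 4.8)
The plan is to mimic exactly the proof of Proposition \ref{quantequa9} for $T^p$, since the Schlumprecht case differs only in which spaces play the roles of the domain and the summands. First I would invoke Theorem \ref{baudieralbiac} with the same exponents $0<p<q$ to obtain maps $(\psi_j:\R\to\R)_{j=1}^\infty$ together with the positive constants $A_{p,q}, B_{p,q}$ satisfying the two-sided estimate
$$A_{p,q}|s-t|^p\leq \max_{j\in\N}|\psi_j(s)-\psi_j(t)|^q\quad\text{and}\quad \sum_{j\in\N}|\psi_j(s)-\psi_j(t)|^q\leq B_{p,q}|s-t|^p.$$
Then I would define $f:S^p\to (\oplus S^q)_{S^q}$ by $f(x)=\big((\psi_j(x_n)-\psi_j(0))_{j=1}^\infty\big)_{n=1}^\infty$ for $x=(x_n)_{n=1}^\infty\in S^p$, exactly as in the Tsirelson case.

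The key computational step is to verify that $f$ lands in $(\oplus S^q)_{S^q}$ and satisfies
$$A_{p,q}^{1/q}\|x-y\|^{p/q}\leq\|f(x)-f(y)\|\leq B_{p,q}^{1/q}\|x-y\|^{p/q}$$
for all $x,y\in S^p$. This uses the explicit form of the norm on $S^p$: since the $n$-th ``block'' of $f(x)-f(y)$ is $(\psi_j(x_n)-\psi_j(y_n))_{j=1}^\infty\in S^q$, the $1$-unconditionality and the defining supremum formula for $\|\cdot\|_{S^q}$ let one compare $\|(\psi_j(x_n)-\psi_j(y_n))_j\|_{S^q}^q$ with $|x_n-y_n|^p$ up to the constants $A_{p,q}, B_{p,q}$ (the max-estimate gives the lower bound for a single coordinate and the sum-estimate gives the upper bound); then the outer $S^q$-norm on the sequence of these blocks matches $\|x-y\|_{S^p}^{p/q}=\|(|x_n-y_n|)_n\|_{S}^{p/q}$ after raising to the power $1/q$. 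This two-sided power-type estimate immediately shows that $f$ is simultaneously a coarse embedding and a uniform embedding, and that $\rho_f(t)\geq A_{p,q}^{1/q}t^{p/q}$, so one may take $C=A_{p,q}^{1/q}$.

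Finally, to conclude $\alpha_{(\oplus S^q)_{S^q}}(S^p)=p/q$, I would combine the lower bound just obtained with the matching upper bound: since $S^q$ is $q$-convex with constant $1$ (being a $q$-convexification), the sum $(\oplus S^q)_{S^q}$ satisfies an upper $\ell_q$-estimate with constant $1$, hence is asymptotically $q$-uniformly smooth by Proposition \ref{lemmadgj}; it is also reflexive by Theorem 8 and Proposition 2(2) of \cite{CKKM}, as recalled in the proof of Theorem \ref{corcorQuan}(ii). Moreover $S^p$ has the almost $p$-co-Banach-Saks property with $\theta_k=\log_2(k+1)^{1/p}$, so Theorem \ref{almostco} gives $\alpha_{(\oplus S^q)_{S^q}}(S^p)\leq p/q$, and equality follows.

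I do not expect any serious obstacle: the only mild subtlety is the bookkeeping in the norm comparison, i.e.\ checking that the two inequalities in Theorem \ref{baudieralbiac} propagate correctly through both the inner $S^q$-norm on each block and the outer $S^q$-norm combining the blocks, which works because both norms are $1$-unconditional lattice norms and the Schlumprecht norm formula is monotone in the moduli of the coordinates. This is the same mechanism as in the Tsirelson case, so the proof can be written very briefly by referring back to Proposition \ref{quantequa9} for $T^p$.
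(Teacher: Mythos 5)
Your proposal is correct and follows essentially the same route as the paper, which simply derives the Schlumprecht case ``similarly as above,'' i.e., via the same Albiac--Baudier snowflaking map $f(x)=\big((\psi_j(x_n)-\psi_j(0))_j\big)_n$ and the same matching upper bound coming from reflexivity and asymptotic $q$-uniform smoothness of $(\oplus S^q)_{S^q}$ together with the almost $p$-co-Banach-Saks property of $S^p$. The only point worth flagging is that Theorem \ref{almostco} requires $p>1$, so for $p=1$ the upper bound $\alpha\leq 1/q$ should instead be drawn from Corollary \ref{memata}(i), using that the unit vector basis of $S$ satisfies Property (\ref{estrela}) with $\theta_k=\log_2(k+1)$.
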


\section{Coarse Lipschitz embeddings into sums.}\label{sectioncoarseembintosum}

In this last section, we will be specially interested in the nonlinear geometry of the Tsirelson space and its convexifications. In order to obtain Theorem \ref{notell2}, we will prove a technical result on the  coarse Lipschitz non embeddability of certain Banach spaces into the direct sum of Banach spaces with certain $p$-properties (Theorem \ref{jointlowerupper}).  The main goal of this section is to  characterize the Banach spaces which are  coarsely (resp. uniformly) equivalent to $T^{p_1}\oplus \ldots \oplus T^{p_n}$,  for $p_1,\ldots,p_n\in (1,\ldots, \infty)$, and $2\not\in\{p_1,\ldots ,p_n\}$. \\

 Given $x,y\in X$, and $\delta>0$ the \emph{approximate midpoint between $x$ and $y$ with error $\delta$} is given by

$$\text{Mid}(x,y,\delta)=\{z\in X\mid \max\{\|x-z\|,\|y-z\|\}\leq 2^{-1}(1+\delta)\|x-y\|\}.$$\hfill

\noindent The following lemma is an asymptotic version of Lemma 1.6(i) of \cite{JLS} and Lemma 3.2 of \cite{KR}.

\begin{lemma}\label{weaknulllemma}
Let $X$ be an asymptotically $p$-uniformly smooth Banach space, for some $p\in(1,\infty)$. There exists $c>0$ such that, for all $x,y\in X$, all $\delta>0$, and all weakly null sequence $(x_n)_{n=1}^\infty$ in $B_X$,  there exists $n_0\in\N$ such that, for all $n>n_0$,  we have

$$u+\delta^{1/p}\|v\|x_n\in\text{Mid}(x,y,c\delta),$$\hfill

\noindent where $u=\frac{1}{2}(x+y)$, and $v=\frac{1}{2}(x-y)$.
\end{lemma}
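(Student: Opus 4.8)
The plan is to realize the approximate midpoint $u+\delta^{1/p}\|v\|x_n$ directly by estimating the two quantities $\|x-z\|$ and $\|y-z\|$ where $z = u+\delta^{1/p}\|v\|x_n$. Since $x-z = v - \delta^{1/p}\|v\|x_n$ and $y-z = -v - \delta^{1/p}\|v\|x_n$, by symmetry (replacing $x_n$ by $-x_n$ does not affect weak nullity or the ball $B_X$, though here we only need one sign) it suffices to bound $\|v \pm \delta^{1/p}\|v\| x_n\|$ from above by $2^{-1}(1+c\delta)\|x-y\| = (1+c\delta)\|v\|$. Normalizing, I would set $w = v/\|v\|$ (assuming $v\neq 0$; the case $v=0$ is trivial since then $x=y$ and any $z$ with $\|x-z\|\le 0$ forces $z=x$, but actually $\delta^{1/p}\|v\|x_n = 0$ too, so $z = u = x$ works), so $\|w\|=1$, and I must show $\|w \pm \delta^{1/p} x_n\| \le 1 + c\delta$ for all large $n$.

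The key step is the use of asymptotic $p$-uniform smoothness together with the fact that $(x_n)$ is weakly null. By definition of $\overline{\rho}_X$, for the unit vector $w$ and for $t = \delta^{1/p}$ there is a finite-codimensional subspace $E$ of $X$ such that $\sup_{h\in\partial B_E}\|w+th\| - 1 \le \overline{\rho}_X(t) + \delta \le C t^p + \delta = (C+1)\delta$ (using asymptotic $p$-uniform smoothness, $\overline{\rho}_X(t)\le Ct^p$ for $t\in[0,1]$, and we may assume $\delta\le 1$; larger $\delta$ only makes the statement easier since then $\text{Mid}(x,y,c\delta)$ grows). Since $E$ has finite codimension and $(x_n)$ is weakly null in $B_X$, standard arguments (a small perturbation / the principle that a weakly null sequence can be pushed arbitrarily close to any given finite-codimensional subspace) give $n_0$ such that for $n > n_0$ there is $h_n \in \partial B_E$ with $\|x_n - h_n\|$ as small as we like; choosing this error $\le \delta$ and noting $t = \delta^{1/p} \le 1$, we get
$$\|w \pm \delta^{1/p} x_n\| \le \|w \pm \delta^{1/p} h_n\| + \delta^{1/p}\|x_n - h_n\| \le 1 + (C+1)\delta + \delta \le 1 + (C+3)\delta.$$
Multiplying back by $\|v\|$ and setting $c = C+3$ yields $\|x - z\|, \|y-z\| \le (1+c\delta)\|v\| = 2^{-1}(1+c\delta)\|x-y\|$, which is exactly $z \in \text{Mid}(x,y,c\delta)$.

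The main obstacle I anticipate is the perturbation step: pushing the weakly null sequence $(x_n)$ into (or near) the finite-codimensional subspace $E$. One has to be slightly careful because $E$ is finite-codimensional rather than finite-dimensional, so one writes $X = E \oplus F$ with $\dim F < \infty$ and lets $P$ be the projection onto $F$ along $E$; then $(Px_n)$ lives in a finite-dimensional space and is weakly null, hence norm-null, so $x_n - Px_n \in E$ is eventually within $\delta$ of $x_n$, and $x_n - Px_n$ can be renormalized to lie on $\partial B_E$ with negligible further error. A second minor point is the interplay of the three error terms ($\overline{\rho}_X(t)$ itself, the $\inf$-approximation by $\delta$, and the perturbation by $\delta$): as long as one absorbs all of them into a single constant $c$ times $\delta$, there is no difficulty, and the restriction to $\delta \le 1$ is harmless. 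Aside from this, the argument is a routine asymptotic adaptation of the classical approximate-midpoint lemmas of Kalton--Randrianarivony and Johnson--Lindenstrauss--Schechtman.
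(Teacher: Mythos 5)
Your proof is correct, and it takes a different route from the paper's. The paper's proof is a two-line application of a quoted result: Proposition 1.3 of \cite{DGJ}, which says that in an asymptotically $p$-uniformly smooth space there is $c>0$ with $\limsup_n\|x+x_n\|^p\leq \|x\|^p+c\,\limsup_n\|x_n\|^p$ for every weakly null $(x_n)$ in $B_X$; applied to $v\mp\delta^{1/p}\|v\|x_n$ this gives $\limsup_n\|x-z_n\|^p\leq(1+c\delta)\|v\|^p$, and one concludes via $(1+c\delta)^{1/p}<1+c\delta$. You instead unwind the definition of $\overline{\rho}_X$ directly: pick a finite-codimensional $E$ nearly attaining the inner infimum at the unit vector $w=v/\|v\|$, use the bounded projection onto a finite-dimensional complement to push the weakly null $x_n$ into $B_E$ up to a vanishing error, and absorb the three error terms (the modulus itself, the slack in the infimum, and the perturbation) into a single $c\delta$. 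In effect you are inlining a self-contained proof of the special case of \cite{DGJ}, Proposition 1.3 that is needed; this is longer but avoids the external citation and works with first powers rather than $p$-th powers. The side points you flag are all handled correctly: passing from $\partial B_E$ to $B_E$ by convexity, the degenerate case $v=0$, and the reduction to $\delta\leq 1$ (for $\delta\geq 1$ the triangle inequality alone gives $u+\delta^{1/p}\|v\|x_n\in\text{Mid}(x,y,c\delta)$ once $c\geq 1$, since $\delta^{1/p}\leq\delta$). Both arguments yield a constant $c$ depending only on the smoothness constant, as required.
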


\begin{proof}
By Proposition 1.3 of \cite{DGJ},  there exists $c>0$  such that, for all weakly null sequence $(x_n)_{n=1}^\infty$ in $B_X$, we have

$$\limsup_n\|x+x_n\|^p\leq \|x\|^p+c\cdot\limsup_n\|x_n\|^p.$$\hfill

\noindent Fix such sequence. As $\|x-(u+\delta^{1/p}\|v\|x_n)\|=\|v-\delta^{1/p}\|v\|x_n\|$, we get

$$\limsup_n \Big\|x-\Big(u+\delta^{1/p}\|v\|x_n\Big)\Big\|^p\leq (1+c\delta)\|v\|^p.$$\hfill

\noindent Therefore, as $(1+c\delta)^{1/p}<1+c\delta$, there exists $n_0\in\N$ such that $\|x-(u+\delta^{1/p}\|v\|x_n)\|\leq (1+c\delta)\|v\|$, for all $n>n_0$. Similarly, we can assume that $\|y-(u+\delta^{1/p}\|v\|x_n)\|\leq (1+c\delta)\|v\|$, for all $n>n_0$. 
\end{proof}

The following lemma is a simple modification of Lemma 3.3 of \cite{KR}, or Lemma 1.6(ii) of \cite{JLS}, so we omit its proof.

\begin{lemma}\label{punfconlemma}
Suppose $1\leq p<\infty$, and let $X$ be  Banach space with a $1$-unconditional basis $(e_n)_{n=1}^\infty$ satisfying a lower $\ell_p$-estimate with constant $1$. For all $x,y\in X$, and all $\delta>0$, there exists a compact subset $K\subset X$, such that

$$\text{Mid}(x,y,\delta)\subset K +2\delta^{1/p}\|v\|B_X,$$\hfill

\noindent where $u=\frac{1}{2}(x+y)$, and $v=\frac{1}{2}(x-y)$.
\end{lemma}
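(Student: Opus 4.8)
The plan is to follow the scheme of \cite[Lemma~3.3]{KR} (equivalently \cite[Lemma~1.6(ii)]{JLS}): given $z\in\text{Mid}(x,y,\delta)$, I would show that after subtracting the centre $u$ the remaining vector $w=z-u$ splits as a piece living in a \emph{fixed} finite-dimensional coordinate subspace, ranging over a relatively compact set, plus a tail of norm at most $2\delta^{1/p}\|v\|$. We may assume $v\neq 0$ (otherwise $\text{Mid}(x,y,\delta)=\{x\}$ and $K=\{x\}$ works), and, after rescaling $x,y$ by $\|v\|^{-1}$, that $\|v\|=1$; we may also assume $0<\delta\le1$, which is all that is needed in the applications.

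First I would fix, once and for all, an integer $N$ \emph{depending only on $v$ and $\delta$} with $\|Q_Nv\|\le \delta/p$, where $P_N$ is the basis projection onto $\operatorname{span}\{e_1,\dots,e_N\}$ and $Q_N=I-P_N$; this is possible since $(e_n)$ is a basis, so $P_Nv\to v$. Now take $z\in\text{Mid}(x,y,\delta)$ and set $w=z-u$, so that $x-z=v-w$ and $y-z=-(v+w)$; since $\|x-y\|=2$, the midpoint condition reads $\|v+w\|\le 1+\delta$ and $\|v-w\|\le 1+\delta$. Write $v=v_1+v_2$, $w=w_1+w_2$ with $v_1=P_Nv$, $w_1=P_Nw$. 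The vectors $v_1\pm w_1$ and $v_2\pm w_2$ have disjoint supports, so the lower $\ell_p$-estimate with constant $1$ gives $\|v\pm w\|^p\ge \|v_1\pm w_1\|^p+\|v_2\pm w_2\|^p$. Adding the two inequalities and using that, by convexity of $t\mapsto t^p$ together with the triangle inequality, $\|a+b\|^p+\|a-b\|^p\ge 2\|a\|^p$ and $\ge 2\|b\|^p$ for all $a,b\in X$, I obtain
$$2(1+\delta)^p\ \ge\ 2\|v_1\|^p+2\|w_2\|^p.$$
Since $\|v_1\|\ge 1-\|Q_Nv\|\ge 1-\delta/p$, Bernoulli's inequality gives $\|v_1\|^p\ge 1-\delta$, and since $0<\delta\le1$ convexity gives $(1+\delta)^p\le 1+(2^p-1)\delta$; hence $\|w_2\|^p\le (1+\delta)^p-1+\delta\le 2^p\delta$, i.e. $\|w_2\|\le 2\delta^{1/p}=2\delta^{1/p}\|v\|$.

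It remains to control $w_1=P_Nw$. From $z\in\text{Mid}(x,y,\delta)$ we get $\|w\|=\|z-u\|\le\|z-x\|+\|x-u\|\le (1+\delta)+1\le 3$, and $1$-unconditionality gives $\|w_1\|\le\|w\|\le 3$. Thus, as $z$ ranges over $\text{Mid}(x,y,\delta)$, the vector $w_1$ ranges inside the bounded subset $\operatorname{span}\{e_1,\dots,e_N\}\cap 3B_X$ of a finite-dimensional space; let $K'$ be its closure, which is compact. Then every $z\in\text{Mid}(x,y,\delta)$ is of the form $z=u+w_1+w_2\in (u+K')+2\delta^{1/p}\|v\|B_X$, so $K:=u+K'$ works; undoing the normalization multiplies $K$ by $\|v\|$, which preserves compactness.

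The only point requiring genuine care is that $N$ must be chosen uniformly over $z\in\text{Mid}(x,y,\delta)$ (it depends only on the tail behaviour of $v$ and on $\delta$, never on $z$), so that every finite-dimensional piece $w_1$ sits in one fixed bounded slab and $K'$ is honestly relatively compact; everything else is the routine interplay between the lower $\ell_p$-estimate and the elementary convexity inequality $\|a+b\|^p+\|a-b\|^p\ge 2\max\{\|a\|,\|b\|\}^p$, exactly as in \cite{KR} and \cite{JLS}.
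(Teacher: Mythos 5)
Your proof is correct and is precisely the standard head/tail decomposition from Lemma 3.3 of \cite{KR} and Lemma 1.6(ii) of \cite{JLS} that the paper invokes when it omits the proof of this lemma: fix $N$ so that the tail of $v$ is small, bound $\|Q_Nw\|$ via the lower $\ell_p$-estimate together with $\|a+b\|^p+\|a-b\|^p\ge 2\max\{\|a\|^p,\|b\|^p\}$, and absorb $P_Nw$ into a compact set in $\operatorname{span}\{e_1,\dots,e_N\}$. Your explicitly flagged restriction to $0<\delta\le 1$ is the only deviation from the statement as written, and it is harmless --- indeed for $p>1$ the inclusion with radius $2\delta^{1/p}\|v\|$ genuinely fails for large $\delta$ (in $\ell_p$ take $x=e_1$, $y=-e_1$ and the points $((1+\delta)^p-1)^{1/p}e_n\in\text{Mid}(x,y,\delta)$), while only small $\delta$ is ever used in Lemma \ref{lemma}.
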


For each $s>0$, let 

$$\text{Lip}_s(f)=\sup_{t\geq s}\frac{ \omega_f(t)}{t}\ \ \text{ and} \ \ \text{Lip}_\infty(f)=\inf_{s>0}\text{Lip}_s(f).$$\hfill

\noindent  We will need the  following proposition, which can be found in \cite{KR} as Proposition 3.1.

\begin{prop}\label{lipinfty}
Let $X$ be a Banach space and $M$ be a metric space. Let $f:X\to M$ be a coarse map with $\text{Lip}_\infty(f)>0$. Then, for all $\eps,t>0$, and all $\delta\in (0,1)$, there exists $x,y\in X$ with $\|x-y\|>t$ such that

$$f(\text{Mid}(x,y,\delta))\subset \text{Mid}(f(x),f(y),(1+\eps)\delta).$$\hfill
\end{prop}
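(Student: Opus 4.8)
## Proof proposal for Proposition \ref{lipinfty}

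The statement to prove is Proposition \ref{lipinfty}: given a Banach space $X$, a metric space $M$, and a coarse map $f\colon X\to M$ with $\text{Lip}_\infty(f)>0$, then for every $\eps,t>0$ and every $\delta\in(0,1)$ there exist $x,y\in X$ with $\|x-y\|>t$ such that $f(\text{Mid}(x,y,\delta))\subset\text{Mid}(f(x),f(y),(1+\eps)\delta)$.

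\medskip

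The plan is to exploit the definition of $\text{Lip}_\infty(f)=\inf_{s>0}\text{Lip}_s(f)>0$ as a genuinely asymptotic Lipschitz-type constant and to run a standard ``find a near-extremal pair at large scale'' argument. First I would fix $\eps, t, \delta$ and set $L=\text{Lip}_\infty(f)$. Since $L>0$ and $L=\inf_{s>0}\sup_{r\ge s}\omega_f(r)/r$, for any $\eta>0$ one can choose $s$ large (in particular $s>t$) so that $\omega_f(r)/r< L+\eta$ for all $r\ge s$, i.e.\ $\omega_f(r)\le (L+\eta)r$ for all $r\ge s$; simultaneously $\sup_{r\ge s}\omega_f(r)/r\ge L$, so there is some $r_0\ge s$ with $\omega_f(r_0)/r_0 > L-\eta$, hence a pair $x,y\in X$ with $\|x-y\|$ as large as we like (at least $r_0>t$) and $\partial(f(x),f(y))>(L-\eta)\|x-y\|$. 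The point $x,y$ will be this near-extremal pair: it nearly realizes the asymptotic Lipschitz constant.

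\medskip

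The second step is the midpoint estimate itself. Take $z\in\text{Mid}(x,y,\delta)$, so $\|x-z\|,\|y-z\|\le \tfrac12(1+\delta)\|x-y\|$. I would choose the scale $s$ in step one large enough that both $\tfrac12(1+\delta)\|x-y\|$ and $\|x-y\|$ exceed $s$ (this is where one needs $\|x-y\|$ large, and it is arranged by taking $r_0$ big — note $\delta<1$ keeps $\tfrac12(1+\delta)\|x-y\|$ comparable to $\|x-y\|$, but in any case it is $\ge \tfrac12\|x-y\|$, which we can force above $s$). Then $\partial(f(x),f(z))\le\omega_f(\|x-z\|)\le (L+\eta)\cdot\tfrac12(1+\delta)\|x-y\|$ and likewise for $\partial(f(y),f(z))$. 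On the other hand $\partial(f(x),f(y))>(L-\eta)\|x-y\|$. Writing $D=\partial(f(x),f(y))$, we get
$$
\max\{\partial(f(x),f(z)),\partial(f(y),f(z))\}\le \frac{(L+\eta)(1+\delta)}{2(L-\eta)}\,D.
$$
Now choose $\eta>0$ small enough, depending on $L,\delta,\eps$, that $(L+\eta)/(L-\eta)\le 1+\eps\delta/(1+\delta)$, which makes $(L+\eta)(1+\delta)/(L-\eta)\le (1+\delta)+\eps\delta = 1+(1+\eps)\delta$. Then the displayed bound reads $\max\{\cdots\}\le\tfrac12(1+(1+\eps)\delta)D$, which is exactly $f(z)\in\text{Mid}(f(x),f(y),(1+\eps)\delta)$. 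Since $z$ was arbitrary in $\text{Mid}(x,y,\delta)$, this gives the desired inclusion.

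\medskip

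The main obstacle — and it is a mild one — is bookkeeping the order of quantifiers: one must pick $\eta$ first (from $L,\delta,\eps$), then pick the scale $s$ large enough to both control $\omega_f$ from above with constant $L+\eta$ and to ensure all the relevant distances ($\|x-y\|$, $\|x-z\|$, $\|y-z\|$, all of which are $\ge\tfrac12\|x-y\|$) lie above $s$, and only then extract the near-extremal pair $x,y$ with $\|x-y\|$ correspondingly large (certainly $>t$). One should double-check the edge case where $\text{Mid}(x,y,\delta)$ forces distances slightly below $\|x-y\|$: since $\delta\in(0,1)$, every such distance lies in $[\tfrac12\|x-y\|,\,\|x-y\|]$, so a single lower bound $\|x-y\|>2s$ suffices. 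No deeper idea is needed; this is a soft argument using only that $\text{Lip}_\infty(f)$ is a positive, scale-asymptotic constant, together with the triangle inequality.
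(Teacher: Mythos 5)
The paper does not prove this proposition at all; it is quoted from \cite{KR} (Proposition 3.1), and your argument follows the same standard route as the proof there: pick a pair $(x,y)$ at large scale that nearly realizes $\mathrm{Lip}_\infty(f)$ from below, control $\omega_f$ from above by $(L+\eta)r$ for $r$ beyond that scale, and close with the elementary ratio computation. Your second step (the midpoint estimate and the choice of $\eta$ with $(L+\eta)/(L-\eta)\le 1+\eps\delta/(1+\delta)$) is correct; note only that $\|x-z\|$ for $z\in\mathrm{Mid}(x,y,\delta)$ need not be $\ge\frac12\|x-y\|$ (take $z=x$), but this is harmless since $\omega_f$ is nondecreasing and you only ever evaluate the upper bound at $\tfrac12(1+\delta)\|x-y\|\ge s$.

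There is, however, one genuine misstep in the first step. From $\omega_f(r_0)/r_0>L-\eta$ you extract a pair with distance ``as large as we like (at least $r_0>t$)''. This is backwards: $\omega_f(r_0)$ is a supremum over pairs with $\|x-y\|\le r_0$, so the near-extremal pair satisfies $\|x-y\|\le r_0$, and a priori $\|x-y\|$ could be far smaller than $r_0$, possibly below the threshold $s$ you need for the upper estimate and below $t$. The ratio lower bound $d(f(x),f(y))>(L-\eta)r_0\ge(L-\eta)\|x-y\|$ survives, but the largeness of $\|x-y\|$ does not come for free. Two standard patches: (a) since $f$ is a coarse map on a Banach space, $\omega_f(u)\le Cu+C$ for some $C$ (\cite{K3}, Lemma 1.4), so $d(f(x),f(y))>(L-\eta)r_0\ge(L-\eta)s$ forces $\|x-y\|\ge((L-\eta)s-C)/C$, which can be pushed above $\max\{t,2s'\}$ (where $s'$ is the threshold for the upper estimate, chosen first) by taking $s$ large; or (b) argue by contradiction as in \cite{KR}: if every pair with $\|x-y\|>\tau$ had $d(f(x),f(y))\le(1-\nu)L\|x-y\|$, then $\omega_f(r)\le\max\{\omega_f(\tau),(1-\nu)Lr\}$ for all $r$, whence $\mathrm{Lip}_\infty(f)\le(1-\nu)L<L$, a contradiction; this yields pairs with arbitrarily large separation and ratio $>(1-\nu)L$ directly. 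With either patch your argument is complete.
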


The following lemma will play the same role in our settings as  Proposition 3.5 did in \cite{KR}. 

\begin{lemma}\label{lemma}
Let  $1\leq q<p$. Let $X$ be an asymptotically $p$-uniformly smooth Banach space, and $Y$ be a Banach space with a $1$-unconditional basis satisfying a lower $\ell_q$-estimate with constant $1$. Let $f:X\to Y$ be a coarse map. Then, for any $t>0$, and any $\delta\in (0,1)$, there exists $x\in X$, $\tau>t$, and a compact subset $K\subset Y$ such that, for any weakly null sequence $(x_n)_{n=1}^\infty$ in $B_X$, there exists $n_0\in\N$ such that

$$f(x+\tau x_n)\in K+\delta\tau B_Y,\ \ \text{ for all }\ \ n>n_0.$$\hfill
\end{lemma}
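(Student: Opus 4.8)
The idea is to combine the three preliminary lemmas --- the midpoint-enlargement principle of Proposition~\ref{lipinfty}, the asymptotic midpoint lemma for smooth spaces (Lemma~\ref{weaknulllemma}), and the approximate-midpoint compactness lemma for lower-$\ell_q$-estimate spaces (Lemma~\ref{punfconlemma}) --- in essentially the same way Kalton and Randrianarivony combine them, but keeping track of the asymptotic (weakly null) direction. First I would dispose of the trivial case: if $\text{Lip}_\infty(f)=0$, then one checks directly that $f(x+\tau x_n)$ stays in a small ball around $f(x)$ for $\tau$ large, so the conclusion holds with $K$ a singleton; hence we may assume $\text{Lip}_\infty(f)=\lambda>0$.

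\textbf{Main argument.} Fix $t>0$ and $\delta\in(0,1)$. Choose an auxiliary error parameter $\eta>0$ small (to be calibrated at the end, depending on $\delta$, $q$, $p$, and on the constant $c$ from Lemma~\ref{weaknulllemma}), and apply Proposition~\ref{lipinfty} with this $\eta$ and with $t$ to obtain points $x',y'\in X$ with $\|x'-y'\|>t'$ (for a suitably large $t'$) and
$$f(\text{Mid}(x',y',\eta))\subset \text{Mid}(f(x'),f(y'),(1+\eps)\eta).$$
Set $u=\tfrac12(x'+y')$, $v=\tfrac12(x'-y')$, and $\tau=\eta^{1/p}\|v\|$; by enlarging $t'$ we can guarantee $\tau>t$. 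Now for any weakly null $(x_n)$ in $B_X$, Lemma~\ref{weaknulllemma} gives $n_0$ so that $u+\tau x_n=u+\eta^{1/p}\|v\|x_n\in\text{Mid}(x',y',c\eta)$ for all $n>n_0$; absorbing the constant $c$ into the choice of $\eta$ (i.e. running Proposition~\ref{lipinfty} at error level $c\eta$ instead of $\eta$), we get that $f(u+\tau x_n)$ lies in $\text{Mid}(f(x'),f(y'),(1+\eps)c\eta)$ for $n>n_0$. On the other hand, by Lemma~\ref{punfconlemma} applied in $Y$ to the pair $f(x'),f(y')$ with error $(1+\eps)c\eta$, this approximate midpoint set is contained in $K+2\big((1+\eps)c\eta\big)^{1/q}\|f(v')\|B_Y$ for some compact $K\subset Y$, where $v'=\tfrac12(f(x')-f(y'))$. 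Finally bound $\|f(x')-f(y')\|\le \omega_f(\|x'-y'\|)\le \text{Lip}_{t'}(f)\,\|x'-y'\|$, which is $\le (\lambda+\eta)\|x'-y'\|$ once $t'$ is large enough, and relate $\|x'-y'\|$ to $\tau$ via $\|x'-y'\|=2\|v\|=2\eta^{-1/p}\tau$. Putting these together, the ball radius is at most a constant times $\eta^{1/q}\cdot\eta^{-1/p}\tau=\eta^{1/q-1/p}\tau$; since $q<p$ the exponent $1/q-1/p$ is positive, so choosing $\eta$ small enough makes this $\le\delta\tau$. Setting $x=u$ completes the proof.

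\textbf{Main obstacle.} The delicate point is the bookkeeping of constants: one must make sure that all the places where an error parameter gets inflated (the factor $c$ from Lemma~\ref{weaknulllemma}, the $(1+\eps)$ from Proposition~\ref{lipinfty}, the factor $2$ and the exponent change in Lemma~\ref{punfconlemma}) are absorbed \emph{before} $\eta$ is finally chosen, and that the compact set $K$ produced does not depend on the weakly null sequence $(x_n)$ --- it depends only on $x',y'$ and the fixed error level, which are chosen before $(x_n)$ enters. The exponent comparison $1/q>1/p$ is exactly what forces $\eta^{1/q-1/p}\to 0$ and thus makes the radius shrink below $\delta\tau$; this is where the hypothesis $q<p$ is used, and it is the only genuinely quantitative input. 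Everything else is a faithful asymptotic transcription of the Kalton--Randrianarivony argument, replacing their use of block bases of $\ell_p$-like spaces with weakly null sequences controlled by asymptotic uniform smoothness.
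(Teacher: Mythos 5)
Your proposal is correct and follows essentially the same route as the paper's proof: dispose of the case $\text{Lip}_\infty(f)=0$ with a singleton $K$, then combine Proposition~\ref{lipinfty} (run at error level $c\eta$), Lemma~\ref{weaknulllemma}, and Lemma~\ref{punfconlemma}, with $\tau=\eta^{1/p}\|v\|$ and the exponent gap $1/q-1/p>0$ absorbing all constants once $\eta$ is small. The only differences are cosmetic (the paper fixes $C=\text{Lip}_s(f)$ for a fixed $s$ rather than using $\text{Lip}_{t'}(f)\le\lambda+\eta$, and chooses the error parameter explicitly upfront), plus a harmless typo where you wrote $\|f(v')\|$ for $\|v'\|=\tfrac12\|f(x')-f(y')\|$.
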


\begin{proof}
If $\text{Lip}_\infty(f)=0$, then there exists $\tau>t$ such that $\text{Lip}_\tau(f)<\delta$. Hence, $\omega_f(\tau)<\delta \tau$, and the result follows by letting  $x=0$ and $K=\{f(0)\}$. Indeed, if  $z\in B_X$, we have

$$\|f(\tau z)-f(0)\|\leq\omega_f(\|\tau z\|)\leq \omega_f(\tau)\leq \delta \tau.$$\hfill

Assume $\text{Lip}_\infty(f)>0$. In particular,  $C=\text{Lip}_s(f)>0$, for some $s>0$. Let $c>0$ be given by Lemma \ref{weaknulllemma} applied to $X$ and $p$. As $q<p$, we can pick $\nu\in(0,1)$ such that $2C(2c)^{1/q}\nu^{1/q-1/p}<\delta$. By Proposition \ref{lipinfty}, there exists $u,v\in X$ such that $\|u-v\|>\max\{s,2t\nu^{-1/p}\}$ and 

$$f(\text{Mid}(u,v,c\nu))\subset \text{Mid}(f(u),f(v),2c\nu).$$\hfill

 Let $x=\frac{1}{2}(u+v)$, and $\tau=\nu^{1/p}\|\frac{1}{2}(u-v)\|$ (so $\tau>t$). Fix a weakly null sequence $(x_n)_{n=1}^\infty$ in $B_X$. Then, by Lemma \ref{weaknulllemma}, there exists $n_0\in\N$ such that $x+\tau x_n\in \text{Mid}(u,v,c\nu)$, for all $n>n_0$. So, 

$$f(x+\tau x_n)\subset f(\text{Mid}(u,v,c\nu))\subset\text{Mid}(f(u),f(v),2c\nu),$$\hfill

\noindent  for all $n>n_0$. Let  $K\subset Y$ be given by Lemma \ref{punfconlemma} applied to $Y$,  $f(u), f(v)\in Y$, and $2c\nu$. So,

$$\text{Mid}(f(u),f(v),2c\nu)\subset K+2(2c)^{1/q}\nu^{1/q}\frac{\|f(u)-f(v)\| }{2}B_Y.$$\hfill

\noindent As $\text{Lip}_s(f)=C$, and as $\|u-v\|>s$, we have $\|f(u)-f(v)\|\leq C\|u-v\|=2C\tau \nu^{-1/p}$. Hence, 

$$2(2c)^{1/q}\nu^{1/q}\frac{\|f(u)-f(v)\|}{2}\leq 2C(2c)^{1/q}\nu^{1/q-1/p}\tau<\delta \tau,$$\hfill

\noindent and we are done.
\end{proof}

\begin{remark}\label{equivnorm}
Lemma \ref{lemma} remains valid if we only assume that $X$ has an \emph{equivalent} norm with which $X$ becomes asymptotically $p$-uniformly smooth. Indeed, let $M\geq 1$ be such that $B_{(X,\|\cdot\|)}\subset M \cdot B_{(X,\vertiii{\cdot})}$. Fix $t>0$, and $\delta\in (0,1)$. Applying  Lemma \ref{lemma} to $(X,\vertiii{\cdot})$ with  $t'=M.t$ and $\delta'=\delta/M$, we obtain $x\in X$, $\tau'>t'$, and a compact set $K\subset Y$. The result now follows by letting $\tau=\tau'/M$. 
\end{remark}

\begin{thm}\label{jointlowerupper}
Let $1 \leq q_1<p<q_2$. Assume that 

\begin{enumerate}[(i)]
\item $X$ is an asymptotically $p$-uniformly smooth Banach space  with  the $p$-co-Banach-Saks property, and it does not contain $\ell_1$,
\item $Y_1$ is a Banach space with a $1$-unconditional basis satisfying a lower $\ell_{q_1}$-estimate with constant $1$, and
\item $Y_2$ is a reflexive asymptotically $q_2$-uniformly smooth Banach space.
\end{enumerate}

\noindent  Then $X$ does not coarse Lipschitz embed into $Y_1\oplus Y_2$.
\end{thm}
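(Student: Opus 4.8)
The strategy is a contradiction argument combining the approximate-midpoint technology (Lemmas \ref{weaknulllemma}--\ref{lemma}) used in the $Y_1$ coordinate with the Kalton--Randrianarivony graph estimate (Theorem \ref{KR4.2}) used in the $Y_2$ coordinate. Suppose $f=(f_1,f_2):X\to Y_1\oplus Y_2$ is a coarse Lipschitz embedding, so there is $C>0$ with $\omega_f(t)\le Ct+C$ and $\rho_f(t)\ge C^{-1}t-C$ for all $t\ge 0$. As usual, it suffices to work with a coarse map of positive lower linear compression at infinity; after a standard reduction one may assume $\mathrm{Lip}_\infty(f)>0$, and the same bounds hold for $f_1$ and $f_2$ separately (with a possibly larger constant). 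Since $X$ does not contain $\ell_1$, Rosenthal's theorem gives a normalized weakly null sequence $(x_n)$ in $X$ with $\inf_{n\ne m}\|x_n-x_m\|>0$.

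\textbf{Step 1: kill the $Y_1$-component on a weakly null perturbation.} Apply Lemma \ref{lemma} (in the form of Remark \ref{equivnorm} if $X$ only has an equivalent asymptotically $p$-uniformly smooth norm) to the map $f_1:X\to Y_1$, with $q=q_1<p$: given $t>0$ and $\delta\in(0,1)$, we obtain $x\in X$, $\tau>t$, and a compact $K\subset Y_1$ such that for the weakly null sequence $(x_n)$ there is $n_0$ with $f_1(x+\tau x_n)\in K+\delta\tau B_{Y_1}$ for all $n>n_0$. Since $K$ is compact, passing to a further subsequence of $(x_n)$ we may assume all the points $f_1(x+\tau x_n)$, $n>n_0$, lie within $3\delta\tau$ of each other; so the $f_1$-images of the shifted, rescaled basis vectors are confined to a set of diameter $O(\delta\tau)$. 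The point is that along this subsequence, the map $n\mapsto f(x+\tau x_n)$ is, in its $Y_1$-coordinate, nearly constant, so all the "spreading" that the embedding lower bound forces must occur in the $Y_2$-coordinate.

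\textbf{Step 2: contradict via the graph estimate in $Y_2$.} Build, for each $k$, the map $\varphi_k:G_k(\M)\to X$, $\varphi_k(n_1,\dots,n_k)=(x+\tau x_{n_1})+\cdots+(x+\tau x_{n_k})=kx+\tau\sum_j x_{n_j}$, over a suitable infinite $\M$ refining the subsequence from Step 1; this is $2\tau$-Lipschitz on $(G_k(\M),d_k)$, hence $f_2\circ\varphi_k$ is $3C\tau$-Lipschitz. By Theorem \ref{KR4.2} applied to $Y_2$ (reflexive, asymptotically $q_2$-uniformly smooth), there is $K_2>0$ and $\M_k$ with $\mathrm{diam}(f_2\circ\varphi_k(G_k(\M_k)))\le 3K_2C\tau k^{1/q_2}$; diagonalize as in the proof of Theorem \ref{upperpart}. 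On the other hand, using that $X$ has the $p$-co-Banach-Saks property on the weakly null sequence $(x_{n_{2j-1}}-x_{n_{2j}})$, we find $c>0$ and, for each $k$, indices with $\|\tau\sum_{j=1}^k(x_{m_{2j-1}}-x_{m_{2j}})\|\ge c\tau k^{1/p}$, so $\mathrm{diam}(\varphi_k(G_k(\M_k)))\ge c\tau k^{1/p}$. Combining with the midpoint control from Step 1, the full image $f\circ\varphi_k$ has $Y_1$-diameter $O(\delta\tau k)$ — \emph{wait}, here is the subtlety: summing $k$ midpoint perturbations does not immediately bound the $Y_1$-diameter by $O(\delta\tau)$. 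This is where the argument must be run more carefully: one does not sum; instead one should choose $t$ large and $\delta$ small depending on the eventual $k$, or re-derive a version of Lemma \ref{lemma} directly for the graph maps $\varphi_k$ so that $f_1\circ\varphi_k$ has image in a compact set plus $\delta\cdot(\text{scale})\cdot B_{Y_1}$ with a $k$-independent bound — exactly as in the proof of Theorem 4.2/4.3 of \cite{KR}. Granting that, $\mathrm{diam}(f_1\circ\varphi_k)$ is small relative to $\tau k^{1/p}$, so $\mathrm{diam}(f_2\circ\varphi_k(G_k(\M_k)))\ge \rho_f(c\tau k^{1/p}) - \mathrm{diam}(f_1\circ\varphi_k)\ge \tfrac{1}{2}C^{-1}c\tau k^{1/p}$ for large $k$. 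Comparing with the upper bound $3K_2C\tau k^{1/q_2}$ and using $p<q_2$, letting $k\to\infty$ yields a contradiction.

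\textbf{Main obstacle.} The genuinely delicate point is reconciling the two scales: Lemma \ref{lemma} is stated for single points $x+\tau x_n$, but the graph maps $\varphi_k$ involve sums of $k$ such perturbations, and naively the $Y_1$-error scales like $k\delta\tau$ while the $Y_2$-gain from the $p$-co-Banach-Saks property scales only like $\tau k^{1/p}$ — so for fixed $\delta$ the $Y_1$-error can dominate. The resolution (following Kalton--Randrianarivony's original argument) is to choose the parameters in the right order: first fix $k$, then run the midpoint/Proposition \ref{lipinfty} machinery with $\delta=\delta(k)\to 0$ and $t=t(k)\to\infty$ fast enough that the accumulated $Y_1$-error stays $o(\tau k^{1/p})$, while the Kalton--Randrianarivony bound in $Y_2$ remains valid for that $k$ with a uniform constant $K_2$. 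Carrying out this interleaving of quantifiers cleanly — essentially proving a ``graph version'' of Lemma \ref{lemma} — is the technical heart of the proof; once it is in place, the $p<q_2$ comparison closes the argument exactly as in Theorem \ref{upperpart}.
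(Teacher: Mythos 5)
Your overall strategy is the right one and matches the paper's: use Lemma \ref{lemma} to confine the $Y_1$-coordinate near a compact set, use Theorem \ref{KR4.2} to bound the $Y_2$-diameter of the graph images from above, and use the $p$-co-Banach-Saks property to bound the full diameter from below. However, you have explicitly left the technical heart unresolved (``Granting that\dots'', ``Carrying out this interleaving of quantifiers cleanly \dots is the technical heart''), and the obstacle you flag is real: with your unnormalized map $\varphi_k(\bar n)=kx+\tau\sum_j x_{n_j}$, the perturbation has norm of order $\tau k^{1/p}$, so it is not of the form $x+\tau y$ with $y\in B_X$, and Lemma \ref{lemma} simply does not apply to it --- the lemma produces its own base point and scale $\tau$, and only controls $f_1$ at perturbations of size $\tau$ by elements of $B_X$. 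Neither of your suggested fixes (taking $\delta=\delta(k)$ small, or re-deriving a graph version of the lemma) is carried out, and the first one does not address the actual problem, which is one of scale rather than of the size of $\delta$.

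The paper's resolution is a single, decisive normalization: define
$$\varphi_{k,\delta}(n_1,\ldots,n_k)=x+\frac{\tau}{c}\,k^{-1/p}\bigl(x_{n_1}+\ldots+x_{n_k}\bigr),$$
where $c$ is the $p$-Banach-Saks constant (available because $X$ is asymptotically $p$-uniformly smooth, via Proposition \ref{lemmakalton}), so that $\frac{1}{c}k^{-1/p}(x_{n_1}+\ldots+x_{n_k})\in B_X$ and successive such normalized sums form a weakly null sequence in $B_X$. Then Lemma \ref{lemma} applies verbatim and, after Ramsey plus compactness of $K$, gives $\mathrm{diam}(f_1\circ\varphi_{k,\delta})\leq 3\delta\tau$ --- a single error $O(\delta\tau)$ with no factor of $k$. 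The price is that the co-Banach-Saks lower bound now yields only $\mathrm{diam}(\varphi_{k,\delta}(G_k(\M_\delta)))\geq \tau d/c$, a constant times $\tau$ rather than $\tau k^{1/p}$; but this is still enough, because the Kalton--Randrianarivony upper bound for $f_2\circ\varphi_{k,\delta}$ becomes $2K\tau Ck^{1/q_2-1/p}c^{-1}+KCk^{1/q_2}$ (note $\mathrm{Lip}(f_2\circ\varphi_{k,\delta})\leq 2\tau Ck^{-1/p}c^{-1}+C$), and after dividing by $\tau>k$ every term on the right tends to $0$ as $k\to\infty$ and $\delta\to 0$, using $q_2>p$. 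So the contradiction is between a fixed positive constant and a vanishing quantity, not between two powers of $k$. Without this normalization (or a genuinely new ``graph version'' of Lemma \ref{lemma}, which you do not supply), your argument does not close.
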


\begin{proof}
Let $Y_1\oplus_1 Y_2$ denote the space $Y_1\oplus Y_2$ endowed with the norm $\|(y_1,y_2)\|=\|y_1\|+\|y_2\|$, for all $(y_1,y_2)\in Y_1\oplus Y_2$. Assume $f=(f_1,f_2):X\to Y_1\oplus_1 Y_2$ is  a coarse Lipschitz embedding. As $f$ is a coarse Lipschitz embedding, there exists $C>0$ such that $\rho_f(t)\geq C^{-1}t-C$, and $\omega_{f_2}(t)\leq Ct+C$, for all $t>0$. 

Fix $k\in\N$, and $\delta\in (0,1)$. Then, by Lemma \ref{lemma}, there exists $\tau>k$, $x\in X$,  and a compact subset $K\subset Y_1$, such that, for any weakly null sequence $(y_n)_{n=1}^\infty$ in $B_X$, there exists $n_0\in\N$, such that

$$f_1(x+\tau y_n)\in K+\delta \tau B_{Y_1},$$\hfill

\noindent for all $n>n_0$.

As $X$ does not contain $\ell_1$, by Rosenthal's $\ell_1$-theorem, we can pick a normalized weakly null sequence $(x_n)_{n=1}^\infty$ in $X$, with $\inf_{n\neq m}\|x_n-x_m\|>0$.  As $X$ has the $p$-Banach-Saks property (Proposition \ref{lemmakalton}), there exists $c>0$ (independent of $k$) such that, by going to a subsequence if necessary, we have

$$\|x_{n_1}+\ldots +x_{n_k}\|\leq ck^{1/p},$$\hfill

\noindent for all $	n_1<\ldots <n_k\in \N$. Define a map $\varphi_{k,\delta}: G_k(\N)\to X$ by letting

$$\varphi_{k,\delta}(n_1,\ldots ,n_k)=x+\frac{\tau}{c} k^{-1/p}(x_{n_1}+\ldots +x_{n_k}),$$\hfill

\noindent for all $(n_1,\ldots ,n_k)\in G_k(\N)$. 

As $d((n_1,\ldots ,n_k),(m_1,\ldots ,m_k))\leq 1$ implies $\|\sum_{j=1}^k x_{n_j}-\sum_{j=1}^k x_{m_j}\|\leq 2$, we have that $\text{Lip}(f_2\circ\varphi_{k,\delta})\leq 2\tau C k^{-1/p}c^{-1}+C$. Therefore, by Theorem \ref{KR4.2}, there exists $\M_{k,\delta}\subset \N$ such that

\begin{align*}
\text{diam}(f_2\circ\varphi_{k,\delta}(G_k(\M_{k,\delta})))\leq  2K\tau  C k^{1/q_2-1/p}c^{-1}+KCk^{1/q_2},
\end{align*}\hfill

\noindent for some $K>0$ independent of $k$ and $\delta$.

Notice that,  if $(n^j_1,\ldots ,n^j_k)_{j=1}^\infty$ is a sequence in $G_k(\M_{k,\delta})$, with $n^j_k<n^{j+1}_1$, for all $j\in\N$, then $(x_{n^j_1}+\ldots +x_{n^j_k})_{j=1}^\infty$ is a weakly null sequence in $ck^{1/p}\cdot B_X$. Therefore,

$$f_1\circ \varphi_{k,\delta}(n^j_1,\ldots ,n^j_k)\in K+\delta\tau B_{Y_1},$$\hfill

\noindent for large enough $j$. This argument and standard Ramsey theory, gives us that, by 	passing to a  subsequence of $\M_{k,\delta}$,  we can assume that, for all $(n_1,\ldots ,n_k)\in G_k(\M_{k,\delta})$, 

$$f_1\circ \varphi_{k,\delta}(n_1,\ldots ,n_k)\in K+\delta\tau B_{Y_1}.$$\hfill

\noindent Therefore, as $K$ is compact, by passing to a further subsequence, we can assume that  $\text{diam}(f_1\circ\varphi_{k,\delta}(G_k(\M_{k,\delta})))\leq 3\delta \tau$ (see Lemma 4.1 of \cite{KR}).

We have shown that, for all $k\in\N$, and all $\delta\in (0,1)$, there exists a subsequence $\M_{k,\delta}\subset \N$ such that

\begin{align}\label{403}
\text{diam}(f\circ\varphi_{k,\delta}(G_k(\M_{k,\delta})))\leq  2K\tau C k^{1/q_2-1/p}c^{-1}+KCk^{1/q_2}+3\delta \tau.
\end{align}\hfill

\noindent We may assume that $\M_{k+1,\delta}\subset \M_{k,\delta}$, for all $k\in\N$, and all $\delta\in (0,1)$. For each $\delta\in (0,1)$, let $\M_\delta\subset \N$ diagonalize the sequence $(\M_{k,\delta})_{k=1}^\infty$.

As  $X$ has the $p$-co-Banach-Saks property, arguing similarly as in the proof of Theorem \ref{upperpart}, we get that  there exists $d>0$ (independent of $k$) such that, for all $k\in\N$, there exists $ n_1<\ldots <n_{2k}\in \M_{k,\delta}$, such that

$$ \Big\|\sum_{j=1}^k(x_{n_{2j-1}}-x_{n_{2j}})\Big\|\geq dk^{1/p}.$$\hfill

\noindent Therefore, $\text{diam}(\varphi_{k,\delta}(G_k(\M_\delta)))\geq \tau d/c$, which implies that 

\begin{align}\label{1}
\text{diam}(f \circ\varphi_{k,\delta}(G_k(\M_\delta)))\geq \tau d (cC)^{-1}-C,
\end{align}\hfill

\noindent for all $k\in\N$, and all $\delta\in (0,1)$. So, Equation (\ref{403}) and Equation (\ref{1}) give us that

$$\tau d (cC)^{-1}-C\leq 2K\tau C k^{1/q_2-1/p}c^{-1}+KCk^{1/q_2} +3\delta \tau.$$\hfill

\noindent for all $k\in\N$, and all $\delta\in (0,1)$. As $\tau>k$, this gives us that

$$ d (cC)^{-1}-Ck^{-1}\leq 2K C k^{1/q_2-1/p}c^{-1}+KCk^{1/q_2-1} +3\delta $$\hfill

\noindent for all $k\in\N$, and all $\delta\in (0,1)$. As $q_2>p>1$, by letting $k\to \infty$ and $\delta\to 0$, we get a contradiction.
\end{proof}

If $T=(T_1,T_2):X\to Y_1\oplus Y_2$ is a linear isomorphic embedding, then either $T_1:X\to Y_1$ or $T_2:X\to Y_2$ is not strictly singular, i.e., $T_i:X_0\to Y_i$ is a linear isomorphic embedding, for some infinite dimensional subspace $X_0\subset X$, and some $i\in\{1,2\}$. Is there an  analog of this result for coarse Lipschitz embeddings? Precisely, we ask the following.

\begin{problem}
Let $X$, $Y_1$ and $Y_2$ be Banach spaces and consider a coarse Lipschitz embedding $f=(f_1,f_2):X\to Y_1\oplus Y_2$. Is there an infinite dimensional subspace $X_0\subset X$ such that either $f_1:X_0\to Y_1$ or $f_2:X_0\to Y_2$ is a coarse Lipschitz embedding?
\end{problem}

%\subsection{Sums of $p$-convexifications of Tsirelson spaces.} 

We can now prove Theorem \ref{notell2}, which  will be essential in the proof of Theorem \ref{aplicacaoporra}.

\begin{proof}[Proof of Theorem \ref{notell2}]
Say $m\in \{1,\ldots ,n-1\}$ is such that $p\in(p_m,p_{m+1})$ (the other cases have analogous proofs). Then $(T^{p_{m+1}}\oplus \ldots \oplus T^{p_n})_{\ell_\infty}$ is reflexive (see \cite{OSZ}, Proposition 5.3(b)). Also, it is easy to see that $(T^{p_{m+1}}\oplus \ldots \oplus T^{p_n})_{\ell_\infty}$ is asymptotically $p_{m+1}$-uniformly smooth. By Theorem \ref{jointlowerupper}, it is enough to prove the following claim.\\

\textbf{Claim:} Fix $\eps>0$ such that $p_m+\eps<p$. $(T^{p_1}\oplus \ldots \oplus T^{p_m})_{\ell_{p_m}}$  can be renormed so that it has a $1$-unconditional basis satisfying a lower $\ell_{(p_m+\eps)}$-estimate with constant $1$.\\

For each $k\in\N$ and $p\in[1,\infty)$, denote by $P_k=P^p_k:T^{p}\to T^{p}$ the projection on the first $k$ coordinates, and let $Q_k=\text{Id}-P_k$. By Proposition 5.6 of \cite{JLS}, there exists $M\in [1,\infty)$ and $N\in\N$ such that $Q_N(T^{p_j})$ has an equivalent norm with $(p_j+\eps)$-concavity constant  $M$, for all $j\in\{1,\ldots ,m\}$ (precisely, the modified Tsirelson norm has this property, see \cite{CS} for definition). 

As the shift operator on the basis of $T^{p}$ is an isomorphism onto $Q_1(T^{p})$, we have that $T^{p}\cong Q_k(T^{p})$, for all $k\in\N$, and all $p\in [1,\infty)$.  Therefore,  it follows that $(T^{p_1}\oplus \ldots \oplus T^{p_m})_{\ell_{p_m}}$ has an equivalent norm  with $(p_m+\eps)$-concavity constant $M$. By Proposition 1.d.8 of \cite{LT}, we can assume that $M=1$. As a $q$-concave basis  with constant $1$ satisfies  a lower $\ell_q$-estimate with constant $1$, we are done.
\end{proof}

Before given the proof of Theorem \ref{aplicacaoporra}, we need a lemma. For that, we must introduce some natation. Let $p\in(1,\infty)$. A Banach space $X$ is said to be \emph{as. $\mathcal{L}_p$} if there exists $\lambda>0$ so that for every $n\in\N$ there is a finite codimensional subspace $Y\subset X$ so that every $n$-dimensional subspace of $Y$ is contained in a subspace of $X$ which is $\lambda$-isomorphic to $L_p(\mu)$, for some $\mu$. As noticed in \cite{JLS}, Proposition 2.4.a, an as. $\mathcal{L}_p$ space is super-reflexive. Also, the $p$-convexifications $T^p$ are as. $\mathcal{L}_p$ (see \cite{JLS}, page 440). 

The following lemma, although not explicitily written,  is contained in the proof of Proposition 2.7 of \cite{JLS}. For the convenience of the reader, we provide its proof here. 

\begin{lemma}\label{lemmaref}
Say $1<p_1<\ldots<p_n<\infty$ and $X=X^{p_1} \oplus\ldots \oplus X^{p_n}$, where $X^{p_j}$ is as. $\mathcal{L}_{p_j}$, for all $j\in \{1,\ldots,n\}$. Assume that  $Y$ is coarsely equivalent to $X$. 

\begin{enumerate}[(i)]
\item Then there exists a separable Banach space $W$ such that $Y\oplus W$ is Lipschitz equivalent to $\bigoplus_{j=1}^n(X^{p_j}\oplus L_{p_j})$.
\item Moreover, if $Y=Y^{p_1} \oplus\ldots \oplus Y^{p_n}$, where $Y^{p_j}$ is as. $\mathcal{L}_{p_j}$, for all $i\in \{1,\ldots,n\}$, then $\bigoplus_{j=1}^n(Y^{p_j}\oplus L_{p_j})$ is Lipschitz equivalent to $\bigoplus_{j=1}^n(X^{p_j}\oplus L_{p_j})$.
\end{enumerate}
\end{lemma}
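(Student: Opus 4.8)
\textbf{Proof proposal for Lemma \ref{lemmaref}.}

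The plan is to follow the scheme of \cite{JLS}, Proposition 2.7, exploiting the by-now standard combination of Lipschitz-for-free constructions with the Gorelik principle and ultrapower arguments. For part (i): since $Y$ is coarsely equivalent to $X$, the standard fact (quoted in the introduction) that coarse equivalence implies coarse Lipschitz embeddability, together with the specific structure of $X$, lets us upgrade to a Lipschitz statement. Concretely, I would first recall that each summand $X^{p_j}$, being as. $\mathcal{L}_{p_j}$, is super-reflexive, so $X$ is super-reflexive and hence (being also a coarse Lipschitz image target and source) Lipschitz geometry and uniform geometry coincide on it up to the usual reductions. The key input is the result of \cite{JLS} that if an as. $\mathcal{L}_p$ space $Z$ coarsely (or uniformly) embeds into a Banach space, then $L_p$ ``absorbs'' into the picture: more precisely, one shows $Y\oplus W$ is Lipschitz equivalent to $X\oplus W'$ where the error spaces are built from the $L_{p_j}$'s. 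I would assemble $W$ as a suitable direct sum $\bigl(\bigoplus_j L_{p_j}\bigr)\oplus W_0$ so that, after adding $W$ to both sides and invoking the Pe\l czy\'nski decomposition-type absorption (each $X^{p_j}\oplus L_{p_j}$ contains complemented copies of itself and of $L_{p_j}$, and is Lipschitz-stable under adding $L_{p_j}$), the two Lipschitz types collapse to $\bigoplus_{j=1}^n(X^{p_j}\oplus L_{p_j})$.

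For part (ii), the hypothesis is symmetric: now both $X$ and $Y$ are finite direct sums of as. $\mathcal{L}_{p_j}$ spaces with matching exponents $p_1<\cdots<p_n$. Applying part (i) to $Y$ coarsely equivalent to $X$ gives a separable $W$ with $Y\oplus W$ Lipschitz equivalent to $\bigoplus_j(X^{p_j}\oplus L_{p_j})$; applying it with the roles reversed (coarse equivalence is symmetric) gives a separable $W'$ with $X\oplus W'$ Lipschitz equivalent to $\bigoplus_j(Y^{p_j}\oplus L_{p_j})$. The point is then to show these two relations force $\bigoplus_j(X^{p_j}\oplus L_{p_j})$ and $\bigoplus_j(Y^{p_j}\oplus L_{p_j})$ to be Lipschitz equivalent. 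I would do this by a Lipschitz Schroeder--Bernstein / absorption argument: the space $\bigoplus_j(X^{p_j}\oplus L_{p_j})$ already Lipschitz-absorbs any separable Banach space of the right ``$\mathcal{L}_{p_j}$-type'' because $L_{p_j}$ is Lipschitz-universal for separable subspaces of the relevant kind and is stably complemented in each summand; hence adding the separable error $W$ or $W'$ changes nothing up to Lipschitz equivalence. Chaining the equivalences then yields the claim.

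The technical heart — and the step I expect to be the main obstacle — is making the ``absorption'' of the separable error space $W$ precise. One needs: (a) that $W$ produced in (i) really is separable and of a form controlled by the $L_{p_j}$'s (this is where one must be careful about what \cite{JLS}'s Gorelik-principle argument actually delivers — it gives Lipschitz embeddings with complemented-range type conclusions only after passing through the as. $\mathcal{L}_p$ structure, so I would need to track constants $\lambda$ and the finite-codimensional subspaces in the definition of as. $\mathcal{L}_p$); and (b) a Lipschitz version of the Pe\l czy\'nski decomposition method, i.e., that $V \cong_{\mathrm{Lip}} V \oplus L_{p_j}$ and $L_{p_j} \cong_{\mathrm{Lip}} \ell_{p_j}(L_{p_j})$-type identities hold so that ``$V\oplus(\text{separable})$'' telescopes back to $V$. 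Since these are Lipschitz rather than linear statements, one cannot simply cite the classical decomposition method; instead I would invoke the fact that for super-reflexive spaces (all the $X^{p_j}$, $Y^{p_j}$ are such) Lipschitz equivalence implies linear isomorphism of suitable complemented pieces, reducing the telescoping to the linear decomposition method, which then closes the argument. The remaining bookkeeping — keeping the finitely many distinct exponents $p_1<\cdots<p_n$ separated so that no cross-absorption between different $p_j$'s occurs — is routine once one notes that $L_p$ does not Lipschitz embed complementably into a space built from $L_{p'}$ for $p'\neq p$, which follows from the stability of $\{p' : X \text{ has a subspace isomorphic to } \ell_{p'}\}$-type invariants under the equivalences in play.
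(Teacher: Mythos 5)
Your proposal correctly identifies the general toolkit (ultrapowers, the $\mathcal{L}_p$ structure theory of Lindenstrauss--Rosenthal, absorption of $L_{p_j}$'s), but it never pins down the actual mechanism, and for part (ii) it takes a route that has a genuine gap. For (i), the paper's argument is short and concrete: coarse equivalence of $X$ and $Y$ gives Lipschitz equivalence of the ultrapowers $Y_\mathcal{U}$ and $X_\mathcal{U}=\bigoplus_j X^{p_j}_\mathcal{U}$ (Proposition 1.6 of \cite{K3}); reflexivity splits each ultrapower as $Z_\mathcal{U}=Z\oplus Z_{\mathcal{U},0}$; the separable complementation property for reflexive spaces cuts the (nonseparable) equivalence down to $Y\oplus W\cong_{\mathrm{Lip}}\bigoplus_j(X^{p_j}\oplus X_{j,0})$ with $W\subset Y_{\mathcal{U},0}$ and $X_{j,0}\subset X^{p_j}_{\mathcal{U},0}$ separable and complemented; and finally, since $X^{p_j}_\mathcal{U}$ is an $\mathcal{L}_{p_j}$-space (Proposition 2.4.a of \cite{JLS}), each $X_{j,0}$ can be enlarged to a complemented copy of $L_{p_j}$ by \cite{LR}. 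Your sketch of (i) replaces this with a vague ``upgrade to a Lipschitz statement'' plus a Pe\l czy\'nski-type absorption, and in particular your assembly of $W$ as $(\bigoplus_j L_{p_j})\oplus W_0$ is not something the argument delivers in part (i): without further hypotheses on $Y$, the complemented separable piece $W$ of $Y_{\mathcal{U},0}$ is an uncontrolled separable Banach space, and that is exactly how the lemma states it.

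The gap in (ii) is the absorption step. You propose to apply (i) in both directions and then ``Lipschitz-absorb'' the separable errors $W$ and $W'$ into $\bigoplus_j(X^{p_j}\oplus L_{p_j})$ and $\bigoplus_j(Y^{p_j}\oplus L_{p_j})$. But there is no reason these sums absorb an arbitrary separable Banach space, and $W$, $W'$ are a priori arbitrary separable spaces: the claimed universality/complementation of $L_{p_j}$ for ``separable spaces of the right type'' is not available because nothing in (i) certifies that $W$ has any such type. The whole point of the extra hypothesis in (ii) is to supply that missing information: when $Y=\bigoplus_j Y^{p_j}$ with each $Y^{p_j}$ as.\ $\mathcal{L}_{p_j}$, the space $W\subset Y_{\mathcal{U},0}=\bigoplus_j Y^{p_j}_{\mathcal{U},0}$ can be enlarged, by the very same Lindenstrauss--Rosenthal argument used for the $X_{j,0}$'s, to be $\bigoplus_j L_{p_j}$; then $Y\oplus W=\bigoplus_j(Y^{p_j}\oplus L_{p_j})$ and (ii) follows immediately from (i) with no Schroeder--Bernstein or decomposition-method input at all. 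Your fallback --- reducing the telescoping to the linear decomposition method via ``Lipschitz equivalence implies linear isomorphism of complemented pieces for super-reflexive spaces'' --- is also not a theorem you can cite in this generality; what Heinrich--Mankiewicz gives is that a Lipschitz retract of a suitable space is linearly complemented, which is used elsewhere in the paper but does not rescue the absorption of an unidentified $W$.
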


\begin{proof}
First we need some definitions. Let $\mathcal{U}$ be an ultrafilter on $\N$, and $Z$ be a Banach space. Then we define the \emph{ultrapower of $Z$ with respect to $\mathcal{U}$} as $Z_\mathcal{U}=\{(z_n)_{n=1 }^\infty\in Z^\N \mid \sup_{i\in \N}\|z_n\|<\infty\}/\sim$, where $(z_n)_{n=1}^\infty\sim (y_n)_{n=1}^\infty$ if $\lim_{n\in \mathcal{U}}\|z_n-y_n\|=0$. $Z_\mathcal{U}$ is a Banach space with norm $\|[(z_n)_{n=1}^\infty]\|=\lim_{n\in\mathcal{U}}\|z_n\|$, where $(z_n)_{n=1}^\infty$ is a representative of the class $[(z_n)_{n=1}^\infty]\in Z_\mathcal{U}$. Notice that $z\in Z\mapsto [(z)_{n=1}^\infty]\in Z_\mathcal{U}$ is a linear isometric embedding. If  $Z$ is reflexive, $Z$ is $1$-complemented in  the ultrapower $Z_\mathcal{U}$ (where the projection is given by $[(z_n)_n]\in Z_\mathcal{U}\mapsto w\text{-}\lim_{n\in\mathcal{U}}z_n\in Z$), and we write $Z_\mathcal{U}=Z\oplus Z_{\mathcal{U},0}$. Also, we have that  $(Z\oplus E)_\mathcal{U}=Z_\mathcal{U}\oplus E_\mathcal{U}$. We can now prove the lemma.  For simplicity, let us assume that $n=2$.

%We write $Z_\mathcal{U}=Z\oplus Z_{\mathcal{U},0}$.

%So, $Y\oplus Y_{\mathcal{U},0}$ is Lipschitz equivalent to $(T^{p_1}\oplus T^{p_1}_{\mathcal{U},0})\oplus ( T^{p_2}\oplus T^{p_2}_{\mathcal{U},0})$.

(i) Let $\mathcal{U}$ be a nonprincipal ultrafilter on $\N$.  As $Y$ is coarsely equivalent to $X$,  $Y_\mathcal{U}$ is Lipschitz equivalent to $X_\mathcal{U}=X^{p_1}_\mathcal{U} \oplus X^{p_2}_\mathcal{U}$ (see \cite{K3}, proposition 1.6).  As the spaces $X^{p_j}_\mathcal{U}$ are reflexive, using the separable complementation property for reflexive spaces (see \cite{FJP}, Section 3), we can pick complemented  separable subspaces $W\subset Y_{\mathcal{U},0}$, and $X_{j,0}\subset X^{p_j}_{\mathcal{U},0}$, for $j\in \{1,2\}$, such that $Y\oplus W$ is Lipschitz equivalent to $(X^{p_1}\oplus X_{1,0})\oplus (X^{p_2}\oplus X_{2,0})$.     By enlarging $X_{j,0}$ and $W$, if necessary, we can assume that $X_{j,0}=L_{p_j}$, for $j\in\{1,2\}$ (this follows from Proposition 2.4.a of \cite{JLS}, Theorem I(ii) and Theorem III(b) of \cite{LR}). 

(ii) The same argument as why $X_{1,0}\oplus X_{2,0}$ can be enlarged so that $X_{1,0}\oplus X_{2,0}=L_{p_1}\oplus L_{p_2}$ gives us that $W$ can also be assumed to be $L_{p_1}\oplus L_{p_2}$.
\end{proof}

We can now prove Theorem \ref{aplicacaoporra}. As mentioned in Section \ref{sectionintro}, Theorem \ref{aplicacaoporra} was proved in \cite{JLS} (Theorem 5.8) for the cases $1<p_1<\ldots <p_n<2$ and $2<p_1<\ldots <p_n<\infty$. In our proof, Theorem \ref{notell2} will play a similar role as Corollary 1.7 of \cite{JLS} did in their proof. Also, we use ideas in the proof of Theorem 5.3 of \cite{KR} in order to unify the cases $1<p_1<\ldots <p_n<2$ and $2<p_1<\ldots <p_n<\infty$. In order to avoid an unnecessarily extensive proof, we will only present the parts of the proof that require Theorem \ref{notell2} above, and therefore are different from what can be found in the present literature.  \\

\noindent \emph{Sketch of the proof of Theorem \ref{aplicacaoporra}.}
By Proposition 5.7 of \cite{JLS}, $T^p$ is uniformly homeomorphic to $T^p\oplus \ell_p$, for all $p\in [1,\infty)$. So, the backwards direction follows. Let us prove the forward direction. As uniform homeomorphism implies coarse equivalence, it is enough to assume that  $Y$ is coarsely equivalent  to $X$. By Theorem \ref{notell2}, $Y$ does not contain $\ell_2$. Let $m\in\{1,\ldots ,n-1\}$ be such that $2\in (p_m,p_{m+1})$ (if such $m$ does not exist, the result simply follows from Theorem 5.8 of \cite{JLS}).\\

\textbf{Claim 1:} $X\oplus\bigoplus_{j=1}^nL_{p_j}$ and  $Y\oplus \bigoplus_{j=1}^nL_{p_j}$ are Lipschitz equivalent.\\

 By Lemma \ref{lemmaref}(i), there exists a separable Banach space $W$ so that  $Y\oplus W$ is Lipschitz equivalent to $\bigoplus_{j=1}^n (T^{p_j}\oplus L_{p_j})$.  Hence,  the image of $Y$ through this Lipschitz equivalence is the range of a Lipschitz projection in $\bigoplus_{j=1}^n (T^{p_j}\oplus L_{p_j})$. Therefore, by  Theorem 2.2 of \cite{HM}, we have that $Y$ is isomorphic to a complemented subspace of $\bigoplus_{j=1}^n (T^{p_j}\oplus L_{p_j})$. Let $A$ be this isomorphic embedding. For each $i\in\{m+1,\ldots ,n\}$, let $\pi_i:Y\to L_{p_i}$ be the composition of $A$ with the projection $\bigoplus_{j=1}^n (T^{p_j}\oplus L_{p_j})\to L_{p_i}$. As $Y$ does not contain $\ell_2$, $\pi_i$ factors through $\ell_{p_i}$ (see \cite{J}). Hence, $Y$ is isomorphic to a complemented subspace of 

$$\bigoplus_{j=1}^m(T^{p_j}\oplus L_{p_j})\oplus \bigoplus_{j=m+1}^n (T^{p_j}\oplus \ell_{p_j}).$$\hfill

  As $Z_1\coloneqq\bigoplus_{j=1}^m(T^{p_j}\oplus L_{p_j})$ and $Z_2\coloneqq\bigoplus_{j=m+1}^n (T^{p_j}\oplus \ell_{p_j})$ are totally incomparable (i.e., none of their infinite dimensional subspaces are isomorphic), $Y\cong Y_1\oplus Y_2$, where $Y_1$ and $Y_2$ are complemented subspaces of $Z_1$ and $Z_2$, respectively (see \cite{EW}, Theorem 3.5). Hence, $Y^*_1$ is complemented in $Z^*_1$. Notice that,  as $Y$ is coarsely equivalent to the super-reflexive space $X$, $Y$ is also super-reflexive (see \cite{R}, Theorem 1A). Hence, $Y_1$ is super-reflexive, and so is $Y_1^*$. As $Y_1$ has cotype $2$ (see Remark \ref{cotypeT}) and $Y_1^*$ has non trivial type (as $Y_1^*$ is super-reflexive), it follows that $Y_1^*$ has type $2$ (see the remark below Theorem 1 in \cite{P2}). So, $Y_1^*$ does not contain a copy of $\ell_2$. Indeed, otherwise $Y_1^*$ would contain a complemented copy of $\ell_2$ (see \cite{M}), contradicting that $Y_1$ does not contain a copy of $\ell_2$.
  
Proceeding similarly as above and using that $Y_1^*$ does not contain $\ell_2$, the main theorem of \cite{J} implies that $Y_1^*$ is isomorphic to a complemented subspace of $\bigoplus_{j=1}^m ({T^{p_j}}^*\oplus \ell_{\tilde{p}_j})$, where each $\tilde{p}_j$ is the conjugate of $p_j$ (i.e., $1/p_j+1/\tilde{p}_j=1$). Therefore, $Y_1$  embeds into $\bigoplus_{j=1}^m (T^{p_j}\oplus \ell_{p_j})$ as a complemented subspace. This gives us that $Y$ embeds into $\bigoplus_{j=1}^n (T^{p_j}\oplus \ell_{p_j})$ as a complemented subspace. 

As the spaces $(T^{p_j}\oplus \ell_{p_j})_{j=1}^n$ are totally incomparable, we can write $Y$ as $Y_{p_1}\oplus\ldots \oplus Y_{p_n}$, where each $Y_{p_j}$ is a complemented subspace of $ T^{p_j}\oplus \ell_{p_j}$ (see \cite{EW}, Theorem 3.5) and it is an as. $\mathcal{L}_{p_j}$ (see  \cite{JLS}, Lemma 2.5 and Proposition 2.7). By Lemma \ref{lemmaref}(ii), we have that $X\oplus\bigoplus_{j=1}^nL_{p_j}$ and  $Y\oplus \bigoplus_{j=1}^nL_{p_j}$ are Lipschitz equivalent.\\

\textbf{Claim 2:} There exists a quotient $W$ of $L_{p_1}\oplus\ldots \oplus L_{p_n}$ such that $Y\oplus W$ is isomorphic to $X\oplus \bigoplus_{j=1}^nL_{p_j}$. \\

The prove of Claim 2 is the same as the proof of the claim in Proposition 2.10 of \cite{JLS}, so we do not present it here. Let us assume the claim and finish the proof.  As $X$ does not contain any $\ell_s$, every operator of $X$ into $\oplus_{j=1}^nL_{p_j}$ is strictly singular (see \cite{KM}, Theorem II.2 and Theorem IV.1). Therefore, by \cite{EW} (or \cite{LT}, Theorem 2.c.13), $Y\cong Y_X\oplus Y_L$ and $W\cong W_X\oplus W_L$, where $Y_X$ and $W_X$ are complemented subspaces of $X$,  $Y_L$ and $W_L$ are complemented subspaces of  $\oplus_{j=1}^nL_{p_j}$, and $X\cong Y_X\oplus W_X$. Proceeding as in the proof of Claim 1 above, we get that $Y_L$ is complemented in $\oplus_{j=1}^n\ell_{p_j}$. So, $Y_L$ is either finite dimensional or  isomorphic to $\oplus_{j\in F}\ell_{p_j}$, for some $F\subset \{1,\ldots ,n\}$.

%remember, if $(X/Y)^*=Y^\perp\subset X^*$

Let us show that $W_X$ is finite dimensional. Suppose this is not the case. As $W$ is a quotient of $\oplus_{j=1}^nL_{p_j}$, and $W_X$ is complemented in $W$, we have that $W^*_X$ embeds into $\oplus_{j=1}^nL_{\tilde{p}_j}$, where each $\tilde{p}_j$ is the conjugate of $p_j$. Therefore, it follows that $W^*_X$ must contain some $\ell_s$ (see \cite{KM}, Theorem II.2 and Theorem IV.1). As $W^*_X$ embeds into $X^*$, and $X^*$ does not contain any $\ell_s$, this gives us a contradiction. 

As $X\cong Y_X\oplus W_X$, and $\text{dim}(W_X)<\infty$, we  have that $\text{dim}(X/Y_X)<\infty$. Therefore, as $X$ is isomorphic to its hyperplanes, we conlude that $Y_X\cong X$. So, we are done.
\qed

\begin{problem}
Does Theorem \ref{aplicacaoporra} hold if $2\in \{p_1,\ldots,p_n\}$?
\end{problem}

\begin{problem}
What can we say if a Banach space $X$ is either coarsely or uniformly equivalent to the Tsirelson space $T$?
\end{problem}

\begin{remark}\label{APPENDIX}
It is worth noticing that, using Remark \ref{equivnorm} and adapting the proofs of Theorem 5.5 and Theorem 5.7 of \cite{KR} to our settings, one can show that $(\oplus T_p)_{T_q}$ does not coarse Lipschitz embed into $T_p\oplus T_q$, for all $p,q\in [1,\infty)$ with $p\neq q$.
\end{remark}

\noindent \textbf{Acknowledgments:} I would like to thank my adviser C. Rosendal for all the help and attention he gave to this paper.   I would also like to thank  Th. Schlumprecht for helpful conversations about the results in Section \ref{sectionstableemb}, and the anonymous referee for their comments and suggestions.

\section{Appendix: computations for Remark \ref{APPENDIX}.}\label{sectionsums}

In this appendix, we  prove some technical results that will, in particular, gives us the claim in Remark \ref{APPENDIX}. Precisely, we show the following.

\begin{thm}\label{cortsisum}
Let $p,q\in [1,\infty)$, with $p\neq q$. Then $(\oplus T_p)_{T_q}$ does not coarse Lipschitz embed into $T_p\oplus T_q$.
\end{thm}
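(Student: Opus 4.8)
The plan is to split into the two cases $p<q$ and $q<p$ --- the space $(\oplus T_p)_{T_q}$ is not symmetric in $p$ and $q$ --- and in each case to transport the two‑parameter graph argument behind Theorems 5.5 and 5.7 of \cite{KR} to the Tsirelson setting, using the machinery of Sections \ref{sectionconvsmooth} and \ref{sectioncoarseembintosum}. Write $X\vcentcolon=(\oplus T_p)_{T_q}$ and $r\vcentcolon=\min\{p,q\}$. The first step records the asymptotic structure to be used. Since $T_q$ is $q$‑convex with constant $1$ it is also $r$‑convex with constant $1$, and combining this with the upper $\ell_p$‑estimate of the basis of $T_p$ one checks that the canonical basis of $X$ satisfies an upper $\ell_r$‑estimate with constant $1$; hence $X$ is reflexive (so it does not contain $\ell_1$), and by Proposition \ref{lemmadgj} it is asymptotically $r$‑uniformly smooth. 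Moreover $X$ contains a $1$‑complemented isometric copy of $T_p$ (a single coordinate block), which has the $p$‑Banach–Saks and $p$‑co‑Banach–Saks properties with exponent exactly $p$, and a $1$‑complemented isometric copy of $T_q$ (the closed span of the first basis vector of each block), with the analogous properties for the exponent $q$. Finally, for every $s\in[1,\infty)$ and every $\eta>0$, $T_s$ admits an equivalent norm whose basis satisfies a lower $\ell_{s+\eta}$‑estimate with constant $1$ (the modified Tsirelson norm, exactly as in the proof of Theorem \ref{notell2}); by Remark \ref{equivnorm} this renorming may be inserted freely into the approximate‑midpoint estimates. One also records that $T_p$ and $T_q$ are reflexive and asymptotically uniformly smooth, so that Theorem \ref{KR4.2} applies to maps into each of them.

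Now suppose $f=(f_1,f_2)\colon X\to T_p\oplus_1 T_q$ is a coarse Lipschitz embedding, with $\rho_f(t)\ge C^{-1}t-C$ and $\omega_f(t)\le Ct+C$; we may pass to the $\ell_1$‑sum so that the coordinate projections are contractive, hence $f_1,f_2$ are coarse Lipschitz. Following \cite{KR}, for integers $k,\ell$ and $\delta\in(0,1)$ one builds maps $\Psi_{k,\ell,\delta}\colon G_k(\N)\times G_\ell(\N)\to X$ in which one graph variable is used, via Proposition \ref{lipinfty} and Lemma \ref{weaknulllemma}, to place an approximate‑midpoint configuration of \emph{weakly null} vectors adapted to the copy of $X$ with the larger of the two exponents --- thereby introducing a free scaling parameter $\tau$ --- while the other graph variable spreads such configurations using the complementary ($p$‑ or $q$‑) structure. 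Composing with $f$: Lemma \ref{punfconlemma} and Lemma \ref{lemma} (in the renormed Tsirelson norm, the $\eta$ of room making the smoothness‑versus‑lower‑estimate inequality hold) confine one of the two coordinate projections of the image, up to a set of arbitrarily small diameter, to a fixed compact set; and Theorem \ref{KR4.2}, applied along a single infinite $\M\subset\N$ to both $f_1$ and $f_2$ after the usual Ramsey refinements, bounds the diameter of the other projection by a sum of terms of the shape $\tau k^{1/q-1/p}$, $\tau\ell^{1/p-1/q}$, $k^{1/q}$, $\ell^{1/p}$ and $\delta\tau$. On the other hand the $p$‑co‑Banach–Saks property of the block $T_p$ together with the $q$‑structure of the horizontal copy forces $\text{diam}\big(\Psi_{k,\ell,\delta}(G_k(\M)\times G_\ell(\M))\big)$ to be a fixed positive multiple of $\tau$, so that $\text{diam}\big(f\circ\Psi_{k,\ell,\delta}(\cdots)\big)\ge C^{-1}\cdot(\text{const})\cdot\tau-C$. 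Since $p\ne q$, at least one of the exponents $1/q-1/p$ and $1/p-1/q$ is strictly negative; choosing $\tau>\max\{k,\ell\}$, letting $k,\ell\to\infty$ and then $\delta\to0$, the lower bound (linear in $\tau$) overtakes the upper bound, a contradiction. The case $q<p$ is obtained from this one by interchanging the roles of the fiber $T_p$‑structure and the horizontal $T_q$‑structure in the product graph.

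The principal difficulty --- and the reason one cannot simply invoke Theorem \ref{jointlowerupper} --- is that $X=(\oplus T_p)_{T_q}$ is asymptotically $r$‑uniformly smooth with $r=\min\{p,q\}$, while neither $T_p$ nor $T_q$ admits a lower $\ell_t$‑estimate for any $t<r$, so the one‑dimensional approximate‑midpoint step of Theorem \ref{jointlowerupper} fails outright; it is genuinely necessary to use the two‑parameter scheme of \cite{KR}, in which one variable performs the midpoint stabilization (carried out so that the required inequality between the smoothness exponent of the ambient copy and the lower‑estimate exponent of the target factor can be met, using $\eta$ small) and the other carries the growth in $\tau$. The delicate point is therefore the synchronization of the extractions --- the two applications of Theorem \ref{KR4.2}, the stabilization of the approximate‑midpoint sets onto a fixed compact set, and the Ramsey‑type refinements --- onto one and the same infinite subset of $\N$, together with the verification that the ``mixed'' term surviving this synchronization is $o(\tau)$. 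A minor additional technicality is the boundary case $\min\{p,q\}=1$, where one uses the lower $\ell_{1+\eta}$‑estimate of a renorming of $T=T_1$ in place of a true lower $\ell_1$‑estimate; this is harmless given the $\eta$ of room already present in the estimates.
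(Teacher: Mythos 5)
You have correctly located the obstruction: $(\oplus T_p)_{T_q}$ is only asymptotically $\min\{p,q\}$-uniformly smooth globally, so Theorem \ref{jointlowerupper} cannot be invoked as stated. But the device you propose to get around it does not close. First, the contradiction at the end of your sketch fails as written: if the upper bound for the diameter of $f\circ\Psi_{k,\ell,\delta}$ genuinely contains both terms $\tau k^{1/q-1/p}$ and $\tau\ell^{1/p-1/q}$, then since $p\neq q$ exactly one of these exponents is strictly \emph{positive}, and that term grows without bound as the corresponding graph parameter tends to infinity; ``at least one of the exponents is strictly negative'' is not enough, and taking $\tau>\max\{k,\ell\}$ does not help because the offending term is already proportional to $\tau$. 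Second, and more fundamentally, the ingredient that actually makes the argument work is never supplied: Lemma \ref{weaknulllemma} and Lemma \ref{lemma}, which you invoke, require asymptotic $p$-uniform smoothness of the \emph{whole} space in which $\text{Mid}(x,y,\delta)$ is computed, and for $(\oplus T_p)_{T_q}$ only the exponent $\min\{p,q\}$ is available globally --- which, for one of the two orderings, is not larger than the lower-estimate exponent of the target factor, so the midpoint-versus-compact-set step collapses. What is needed is a \emph{directional} midpoint lemma, valid for the restricted class of weakly null perturbations one actually uses and carrying the better exponent; introducing a second graph parameter does not produce such a lemma.

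The paper instead proves two separate single-parameter statements (Theorems \ref{oqueestoufazendodavida} and \ref{oqueestoufazendodavida2}), each running the scheme of Theorem \ref{jointlowerupper} over a single $G_k(\N)$; the two cases are distinguished not by a product of graphs but by \emph{where the $k$ vectors are placed}. For $p<q$ the perturbing sequence is a block sequence with respect to the outer $T_q$-sum (all $k$ vectors stacked into one coordinate and pushed to infinity), and Lemmas \ref{weaknullsumlemma} and \ref{lemmasum} show that for such block sequences the midpoint estimate holds with the exponent $q$ of $\mathcal{E}=T_q$, irrespective of the smoothness of the fibers; the growth then comes from the $p$-(co-)Banach--Saks properties of $T_p$ inside that single coordinate. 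For $q<p$ the $k$ vectors are spread over the first $k$ coordinates, so they live in $(\oplus_{i=1}^{k}T_p)_{\ell_\infty}$, which is asymptotically $p$-uniformly smooth, and Remark \ref{equivnorm} lets one run Lemma \ref{lemma} there with exponent $p$; the growth now comes from the $q$-(co-)Banach--Saks properties of the $T_q$-basis. This is also how the corresponding results of \cite{KR} (Theorems 5.5 and 5.7) proceed: they are one-parameter arguments. If you replace your two-parameter scheme by these two placements and prove the two directional midpoint lemmas, your outline becomes the paper's proof; the renorming with a lower $\ell_{s+\eta}$-estimate and the reduction via Remark \ref{equivnorm} that you describe are exactly right.
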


Let $X$ be a sequence of Banach spaces, $\mathcal{E}=(e_n)_n$ be a $1$-unconditional basic sequence, and consider the sum $(\oplus X)_{\mathcal{E}}$. We call a sequence $(x_n)_n$ in $(\oplus X)_\mathcal{E}$ a \emph{block sequence in } $(\oplus X)_\mathcal{E}$ if there exists a strictly increasing sequence $(p_n)_n\in\N^\N$ such that  $x_n=\sum_{j=p_n+1}^{p_{n+1}}y_j$, for all $n\in\N$, where $y_{p_n+1}\in X_{p_{n}+1},\ldots, y_{p_{n+1}}\in X_{p_{n+1}}$. Notice that, if $\mathcal{E}$ is a weakly null sequence, then any bounded block sequence in $(\oplus X)_\mathcal{E}$ is weakly null. Indeed, this follows from the fact that a bounded sequence is weakly null if and only if all of its subsequences  has a convex block subsequence converging to zero in norm (see, for example, \cite{Br}, Lemma 19). 

We omit the proof of the following two lemmas, as they are  essentially the same as the proofs of Lemma \ref{weaknulllemma} and Lemma \ref{lemma}, respectively.

\begin{lemma}\label{weaknullsumlemma}
Let $\mathcal{E}$ be a weakly null $1$-unconditional basis generating an asymptotically $p$-uniformly smooth Banach space, for some $p\in(1,\infty)$. Let $X$ be a Banach space. Let $x,y\in (\oplus X)_\mathcal{E}$, and $\delta>0$. Then, if $(x_n)_n$ is a block sequence in  $B_{(\oplus X)_\mathcal{E}}$,  there exists $n_0\in\N$, such that, for all $n>n_0$,  we have

$$u+\delta^{1/p}\|v\|x_n\in \text{Mid}(x,y,\delta),$$\hfill

\noindent where $u=\frac{1}{2}(x+y)$, and $v=\frac{1}{2}(x-y)$.
\end{lemma}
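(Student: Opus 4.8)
The plan is to follow the proof of Lemma \ref{weaknulllemma} at the level of the midpoint bookkeeping, the one genuinely new point being that the sum $(\oplus X)_\mathcal{E}$ need not itself be asymptotically $p$-uniformly smooth (no hypothesis is placed on $X$), so Proposition 1.3 of \cite{DGJ} cannot be applied to $(\oplus X)_\mathcal{E}$ directly; instead the smoothness estimate must be imported from the space $E$ generated by $\mathcal{E}$. First I would unwind the midpoint condition. Writing $w_n = u + \delta^{1/p}\|v\|x_n$, one has $x - w_n = v - \delta^{1/p}\|v\|x_n$ and $y - w_n = -v - \delta^{1/p}\|v\|x_n$, while $\|x-y\| = 2\|v\|$; hence $w_n \in \text{Mid}(x,y,\delta)$ amounts to $\max\{\|v - \delta^{1/p}\|v\|x_n\|,\ \|v + \delta^{1/p}\|v\|x_n\|\} \le (1+\delta)\|v\|$, and it suffices to bound the two relevant $\limsup_n$'s by $(1+\delta)\|v\|$ and then take $n_0$ large.

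The key step is to transfer these two norms to a disjointly supported computation in $E$. Recall that the norm on $(\oplus X)_\mathcal{E}$ is $\|w\| = \|\sum_j \|w_j\|_X e_j\|_E$, so it depends only on the scalars $\|w_j\|_X$ and is insensitive to the signs of the blocks. Fix $\eta > 0$ and choose a finitely supported $v'$ (nonzero in only finitely many $\oplus$-coordinates) with $\|v - v'\| < \eta$. Since $(x_n)_n$ is a block sequence, its $\oplus$-supports march off to infinity, so for all large $n$ the support of $x_n$ is disjoint from and strictly after that of $v'$. Setting $a = \sum_j \|v'_j\|_X e_j$ and $b_n = \delta^{1/p}\|v\|\sum_j \|(x_n)_j\|_X e_j$ in $E$, disjointness of supports and sign-insensitivity give $\|v' \pm \delta^{1/p}\|v\|x_n\| = \|a + b_n\|_E$ for both signs. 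Moreover $(b_n)_n$ is a norm-bounded block sequence of the weakly null basis $\mathcal{E}$, hence weakly null in $E$ (by the characterization recalled just before the lemma, cf. \cite{Br}, Lemma 19), with $\|b_n\|_E \le \delta^{1/p}\|v\|$.

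Now I would invoke Proposition 1.3 of \cite{DGJ} for the asymptotically $p$-uniformly smooth space $E$: there is $c > 0$ (depending only on $\mathcal{E}$) with $\limsup_n \|a + b_n\|_E^p \le \|a\|_E^p + c\,\limsup_n \|b_n\|_E^p \le \|v'\|^p + c\delta\|v\|^p$. Letting $\eta \to 0$ (so $\|v'\| \to \|v\|$) yields $\limsup_n \|v \pm \delta^{1/p}\|v\|x_n\| \le \|v\|(1 + c\delta)^{1/p}$. When $\mathcal{E}$ satisfies an upper $\ell_p$-estimate with constant $1$ --- which is the case in every application in this appendix (the bases generating $T_q$ and $S_q$) --- the constant $c$ may be taken equal to $1$, and since $(1+\delta)^{1/p} \le 1+\delta$ for $p>1$ this gives exactly the stated bound $(1+\delta)\|v\|$; extracting $n_0$ from the two $\limsup$'s then finishes the proof. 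For a general $\mathcal{E}$ the same argument produces $\text{Mid}(x,y,c'\delta)$ with $c' = c'(\mathcal{E},p)$, which is all that Lemma \ref{lemma} actually consumes, exactly as the constant $c$ enters the conclusion of Lemma \ref{weaknulllemma}.

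I expect the only real obstacle to be the transfer step itself: since $X$ is arbitrary the sum carries no intrinsic asymptotic smoothness, so one cannot estimate $\|a+b_n\|$ inside $(\oplus X)_\mathcal{E}$, and the argument hinges on the two structural observations that (i) the sum-norm collapses to an $E$-norm of the coordinatewise $X$-norms, and (ii) a norm-bounded block sequence in the sum maps to a weakly null, successively supported block sequence of $\mathcal{E}$ in $E$. Once these are in place, the asymptotic $p$-uniform smoothness of $E$ does all the analytic work, precisely as in Lemma \ref{weaknulllemma}.
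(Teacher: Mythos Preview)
Your proposal is correct and matches the paper's intended argument: the paper omits the proof entirely, stating only that it is ``essentially the same'' as that of Lemma~\ref{weaknulllemma}, and your write-up supplies exactly the adaptation that is implicit there---namely, that for a block sequence in $(\oplus X)_{\mathcal{E}}$ the norm computation collapses (after approximating $v$ by a finitely supported vector) to a disjoint-support computation in $E$, so that Proposition~1.3 of \cite{DGJ} can be invoked in $E$ rather than in the sum. Your observation about the missing constant $c$ in the conclusion (present in Lemma~\ref{weaknulllemma} as $\text{Mid}(x,y,c\delta)$ but absent here) is also apt; as you note, the downstream use in Lemma~\ref{lemmasum} absorbs any such constant, so the discrepancy is harmless.
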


\begin{lemma}\label{lemmasum}
Let  $1\leq q<p$. Let $\mathcal{E}$ be a weakly null $1$-unconditional basis  generating an asymptotically $p$-uniformly smooth Banach space, for some $p\in(1,\infty)$. Let $X$ be a  Banach space, and let $Y$  be a Banach space with a $1$-unconditional basis satisfying a lower $\ell_q$-estimate with constant $1$. Let $f:(\oplus X)_\mathcal{E}\to Y$ be a coarse map. Then, for any $t>0$, and any $\delta\in (0,1)$, there exists $x\in (\oplus X)_\mathcal{E}$, a compact subset $K\subset  Y$, and $\tau>t$, such that, for any  block sequence $(x_n)_n$ in $B_{(\oplus X)_\mathcal{E}}$, there exists $n_0\in\N$ such that
	
$$f(x+\tau x_n)\in K+\delta\tau B_{(\oplus X)_\mathcal{E}},\ \ \text{ for all }\ \ n>n_0.$$\hfill

\end{lemma}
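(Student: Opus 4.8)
The plan is to transcribe the proof of Lemma \ref{lemma}, replacing the weakly null sequence in $B_X$ by a block sequence in $B_{(\oplus X)_\mathcal{E}}$ and the midpoint--selection Lemma \ref{weaknulllemma} by its block analogue, Lemma \ref{weaknullsumlemma}. Two observations make this substitution legitimate: Proposition \ref{lipinfty} is stated for a coarse map out of an \emph{arbitrary} Banach space, hence applies with domain $(\oplus X)_\mathcal{E}$; and Lemma \ref{punfconlemma} concerns only the target $Y$, so it is used verbatim. If $\text{Lip}_\infty(f)=0$, pick $\tau>t$ with $\text{Lip}_\tau(f)<\delta$; then $\omega_f(\tau)<\delta\tau$, and taking $x=0$, $K=\{f(0)\}$ gives $\|f(\tau z)-f(0)\|\le\omega_f(\tau)<\delta\tau$ for all $z\in B_{(\oplus X)_\mathcal{E}}$, as desired. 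So assume $\text{Lip}_\infty(f)>0$ and fix $s>0$ with $C\coloneqq\text{Lip}_s(f)>0$.

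Let $c>0$ be the constant produced by Lemma \ref{weaknullsumlemma} applied to $(\oplus X)_\mathcal{E}$ and $p$ (this is where one uses that $\mathcal{E}$ is weakly null, so that bounded block sequences in $(\oplus X)_\mathcal{E}$ are weakly null, and that $\mathcal{E}$ generates an asymptotically $p$-uniformly smooth space). Since $q<p$, choose $\nu\in(0,1)$ with $2C(2c)^{1/q}\nu^{1/q-1/p}<\delta$, shrinking $\nu$ further if necessary so that $c\nu<1$. By Proposition \ref{lipinfty}, applied with $\eps=1$, input error $c\nu$, and scale $\max\{s,2t\nu^{-1/p}\}$, there exist $u,v\in(\oplus X)_\mathcal{E}$ with $\|u-v\|>\max\{s,2t\nu^{-1/p}\}$ and $f(\text{Mid}(u,v,c\nu))\subset\text{Mid}(f(u),f(v),2c\nu)$. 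Set $x=\tfrac12(u+v)$ and $\tau=\nu^{1/p}\|\tfrac12(u-v)\|$, so that $\tau>t$. For any block sequence $(x_n)_n$ in $B_{(\oplus X)_\mathcal{E}}$, Lemma \ref{weaknullsumlemma} gives $n_0$ with $x+\tau x_n\in\text{Mid}(u,v,c\nu)$ for $n>n_0$, hence $f(x+\tau x_n)\in\text{Mid}(f(u),f(v),2c\nu)$. Applying Lemma \ref{punfconlemma} to $Y$, the points $f(u),f(v)$, and error $2c\nu$ yields a compact $K\subset Y$ with
$$\text{Mid}(f(u),f(v),2c\nu)\subset K+(2c)^{1/q}\nu^{1/q}\|f(u)-f(v)\|\,B_Y.$$
Since $\|u-v\|>s$ we have $\|f(u)-f(v)\|\le C\|u-v\|=2C\tau\nu^{-1/p}$, so the radius above is at most $2C(2c)^{1/q}\nu^{1/q-1/p}\tau<\delta\tau$; therefore $f(x+\tau x_n)\in K+\delta\tau B_Y$ for all $n>n_0$.

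The only ingredient not already available is Lemma \ref{weaknullsumlemma}, and that is exactly where the single nontrivial point lies: one must check that the Dimant--Gonzalo--Jaramillo-type estimate $\limsup_n\|z+x_n\|^p\le\|z\|^p+c\,\limsup_n\|x_n\|^p$ persists when $z$ ranges over $(\oplus X)_\mathcal{E}$ and $(x_n)_n$ is a block sequence rather than a weakly null sequence in the unit ball of a single space --- which is why it is recorded separately, and which for the present lemma we may simply quote. Granting Lemma \ref{weaknullsumlemma}, the argument above is a routine rewriting of the proof of Lemma \ref{lemma}, and no further obstacle arises. (In the statement the ball appearing in the conclusion should of course be read as $B_Y$, since $K\subset Y$ and $f$ takes values in $Y$.)
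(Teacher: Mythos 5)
Your proposal is correct and is exactly the route the paper intends: the paper omits this proof, stating only that it is ``essentially the same'' as that of Lemma \ref{lemma} with Lemma \ref{weaknullsumlemma} in place of Lemma \ref{weaknulllemma}, and your transcription carries this out faithfully (including the degenerate case $\text{Lip}_\infty(f)=0$ and the choice of $\nu$ with $2C(2c)^{1/q}\nu^{1/q-1/p}<\delta$). Your observation that the ball in the conclusion should read $B_Y$ rather than $B_{(\oplus X)_\mathcal{E}}$ is also right; it is a typo in the statement.
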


\begin{thm}\label{oqueestoufazendodavida}
Let $1\leq q< p$ and $1<r< l$. Assume that

\begin{enumerate}[(i)]
\item $X$ is a Banach space  with the $r$-co-Banach-Saks property,  with   the $r$-Banach-Saks property, and not containing $\ell_1$,
\item  $\mathcal{E}$ is a weakly null normalized $1$-unconditional basis  generating an asymptotically $p$-uniformly smooth Banach space,
\item  $Y_1$ is a Banach space with a $1$-unconditional basis satisfying a lower $\ell_q$-estimate with constant $1$, and
\item $Y_2$ is a reflexive asymptotically $l$-uniformly smooth Banach space.
\end{enumerate}

\noindent Then $(\oplus X)_\mathcal{E}$ does not coarse Lipschitz embed into $Y_1\oplus Y_2$.  
\end{thm}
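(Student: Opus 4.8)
The plan is to run the proof of Theorem~\ref{jointlowerupper} inside the sum $(\oplus X)_{\mathcal{E}}$, with Lemma~\ref{weaknullsumlemma} and Lemma~\ref{lemmasum} taking the places of Lemma~\ref{weaknulllemma} and Lemma~\ref{lemma}, and with Theorem~\ref{KR4.2} again used to control the $Y_2$-coordinate. So suppose, towards a contradiction, that $f=(f_1,f_2)\colon(\oplus X)_{\mathcal{E}}\to Y_1\oplus_1 Y_2$ is a coarse Lipschitz embedding and fix $C>0$ with $\rho_f(t)\geq C^{-1}t-C$ and $\omega_{f_2}(t)\leq Ct+C$ for all $t\geq 0$. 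The two gaps play genuinely separate roles: $q<p$, together with hypotheses (ii) on $\mathcal{E}$ and (iii) on $Y_1$, is exactly what Lemma~\ref{lemmasum} requires, and will force $f_1$ to be almost constant on the relevant pieces of the graphs $G_k(\mathbb{M})$; the gap $r<l$, together with hypotheses (i) and (iv), is exactly what Theorem~\ref{KR4.2} requires, and will force $f_2$ to have small oscillation there.

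Concretely I would carry out the following steps. First, since $X$ does not contain $\ell_1$, Rosenthal's $\ell_1$-theorem gives a normalized weakly null sequence $(u_n)$ in $X$ with $\inf_{n\neq m}\|u_n-u_m\|>0$; after passing to a subsequence the $r$-Banach-Saks property yields $c>0$ with $\|u_{n_1}+\dots+u_{n_k}\|\leq ck^{1/r}$ for all $k\leq n_1<\dots<n_k$, and a further subsequence makes the $r$-co-Banach-Saks lower bound $\|u_{n_1}+\dots+u_{n_k}\|\geq c'k^{1/r}$ available as well. Second, fix $k$ and $\delta\in(0,1)$ and apply Lemma~\ref{lemmasum} to $f_1\colon(\oplus X)_{\mathcal{E}}\to Y_1$ to obtain $x\in(\oplus X)_{\mathcal{E}}$, $\tau>k$, and a compact $K\subset Y_1$ such that along any block sequence in $B_{(\oplus X)_{\mathcal{E}}}$ the map $z\mapsto f_1(x+\tau z)$ eventually lands in $K+\delta\tau B_{Y_1}$. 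Third, build $\varphi_{k,\delta}\colon G_k(\mathbb{N})\to(\oplus X)_{\mathcal{E}}$ of the form $\varphi_{k,\delta}(\bar n)=x+\tfrac{\tau}{c}k^{-1/r}\Phi(\bar n)$, where $\Phi$ inserts the $X$-block sum $u_{n_1}+\dots+u_{n_k}$ into $(\oplus X)_{\mathcal{E}}$ so that (a) along increasing tuples $\bar n^{(1)}<\bar n^{(2)}<\dots$ the vectors $\Phi(\bar n^{(j)})$ form a block sequence of $(\oplus X)_{\mathcal{E}}$, and (b) on disjoint tuples $\operatorname{diam}\Phi$ is comparable to $k^{1/r}$ by the $r$-co-Banach-Saks property; then $\operatorname{Lip}(f_2\circ\varphi_{k,\delta})\lesssim\tfrac{\tau}{c}Ck^{-1/r}+C$, so Theorem~\ref{KR4.2} provides an infinite $\mathbb{M}_{k,\delta}\subset\mathbb{N}$ with $\operatorname{diam}\!\big(f_2\circ\varphi_{k,\delta}(G_k(\mathbb{M}_{k,\delta}))\big)\lesssim K\big(\tfrac{\tau}{c}Ck^{-1/r}+C\big)k^{1/l}$. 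Fourth, combining step two with a Ramsey argument on the increasing tuples and the compactness of $K$ (as in Lemma~4.1 of \cite{KR}), shrink $\mathbb{M}_{k,\delta}$ so that $\operatorname{diam}\!\big(f_1\circ\varphi_{k,\delta}(G_k(\mathbb{M}_{k,\delta}))\big)\leq 3\delta\tau$. Fifth, diagonalize over $k$, use the $r$-co-Banach-Saks lower bound to produce disjoint tuples realizing $\operatorname{diam}\varphi_{k,\delta}\gtrsim\tau$, whence $\operatorname{diam}(f\circ\varphi_{k,\delta})\geq C^{-1}\tau/c-C$, and compare with the two bounds above; dividing by $\tau$ and letting first $k\to\infty$ (so $\tau\to\infty$) and then $\delta\to0$, the facts that $r<l$ and $l>1$ give $k^{1/l-1/r}\to0$ and $k^{1/l}\tau^{-1}\to0$, so the resulting inequality $C^{-1}/c\leq 0$ is absurd.

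The genuinely delicate point, and where I expect the real work to be, is step three: one must insert the $X$-block sum $u_{n_1}+\dots+u_{n_k}$ into $(\oplus X)_{\mathcal{E}}$ so that two a priori incompatible requirements hold simultaneously, namely that the evaluations of $\varphi_{k,\delta}$ along increasing tuples form a genuine block sequence of $(\oplus X)_{\mathcal{E}}$ (so that Lemma~\ref{lemmasum} can be applied to $f_1$) and that the oscillation of $\varphi_{k,\delta}$ over $G_k$ still records the $\ell_r$-behaviour of $X$ (so that the gap $r<l$ survives passing through Theorem~\ref{KR4.2}). Reconciling these is precisely where all four hypotheses — $q<p$, $r<l$, the asymptotic $p$-uniform smoothness of the $\mathcal{E}$-norm, and the $r$-Banach-Saks and $r$-co-Banach-Saks properties of $X$ — must be used together, and it may well force a two-scale combinatorial construction in the spirit of Theorem~5.5 of \cite{KR} rather than a literal copy of the map used in Theorem~\ref{jointlowerupper}. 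A secondary but unavoidable source of bookkeeping is the management of the parameters: $\tau$ is delivered by Lemma~\ref{lemmasum} and may be taken $>k$, $\delta$ is chosen first and sent to $0$ last, and the normalization $c^{-1}k^{-1/r}$ in $\varphi_{k,\delta}$ must be exactly the one that makes the $f_2$-estimate carry the decaying factor $k^{1/l-1/r}$ while keeping $\operatorname{diam}\varphi_{k,\delta}$ of order $\tau$. Once these are in hand, the contradiction closes as in Theorem~\ref{upperpart}, Theorem~\ref{jointlowerupper}, and Theorem~5.5 of \cite{KR}.
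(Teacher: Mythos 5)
Your outline is the paper's proof of Theorem \ref{oqueestoufazendodavida} almost verbatim: Lemma \ref{lemmasum} forces $f_1$ to be nearly constant on the relevant graphs, Theorem \ref{KR4.2} bounds the oscillation of $f_2$ by $k^{1/l}$ times the Lipschitz constant, the $r$-co-Banach-Saks property gives the lower bound of order $\tau$, and dividing by $\tau>k$ and letting $k\to\infty$ and $\delta\to 0$ closes the contradiction using $1/l-1/r<0$. Every step you actually execute is correct. The gap is exactly where you place it yourself: you never construct the insertion $\Phi$, and without it there is no argument. The resolution, however, is a one-line device rather than the two-scale combinatorial construction you anticipate. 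The paper sets $\varphi_{k,\delta}(n_1,\ldots,n_k)=x+\frac{\tau}{c}k^{-1/r}(x^{n_1}_{n_1}+\cdots+x^{n_1}_{n_k})$, where $x^{j}_{n}$ denotes the element of $(\oplus X)_{\mathcal{E}}$ whose $j$-th $\mathcal{E}$-coordinate is $x_n$ and whose other coordinates vanish; that is, the whole block sum $x_{n_1}+\cdots+x_{n_k}$ is placed in the \emph{single} coordinate indexed by $n_1$. Because $\mathcal{E}$ is normalized and $1$-unconditional, the $(\oplus X)_{\mathcal{E}}$-norm of such a vector equals $\|x_{n_1}+\cdots+x_{n_k}\|_X$, so your requirement (b) --- that the oscillation of $\Phi$ record the $\ell_r$-behaviour of $X$ --- holds with no loss, and both the $r$-Banach-Saks upper bound and the $r$-co-Banach-Saks lower bound transfer verbatim; and along increasing tuples the coordinate index $n_1$ strictly increases, so the evaluations form a block sequence of $(\oplus X)_{\mathcal{E}}$, which is weakly null because $\mathcal{E}$ is, and your requirement (a) holds as well. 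No interaction between the two scales is needed. (Contrast with Theorem \ref{oqueestoufazendodavida2}, where the summands are instead spread over the coordinates $1,\ldots,k$ precisely so that the exponent of $\mathcal{E}$, rather than that of $X$, governs the estimates.)

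One further point that your requirements (a)--(b) leave implicit and that you should verify when writing this up: the bound $\text{Lip}(f_2\circ\varphi_{k,\delta})\leq 2\tau k^{-1/r}Cc^{-1}+C$ that you quote needs the map $\bar n\mapsto x^{n_1}_{n_1}+\cdots+x^{n_1}_{n_k}$ to be $2$-Lipschitz from $(G_k(\N),d)$ into $(\oplus X)_{\mathcal{E}}$. This is immediate for distance-one pairs agreeing in the first entry, but for pairs differing in the first entry the two images are disjointly supported in the sum and the naive estimate only gives a bound of order $k^{1/r}$; the paper asserts the Lipschitz bound directly, so you should either supply a short separate argument for these pairs or restrict attention to the interlaced pairs actually used in the proof of Theorem \ref{KR4.2}. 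Also note that the lower bound on $\text{diam}(\varphi_{k,\delta}(G_k(\M_\delta)))$ is obtained from two tuples whose images sit in different coordinates of the sum, so it is the $r$-co-Banach-Saks estimate for a single block sum together with $1$-unconditionality, rather than the literal difference $\sum_j(x_{n_{2j-1}}-x_{n_{2j}})$, that one ends up using.
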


\begin{proof}
Let $f=(f_1,f_2):(\oplus X)_\mathcal{E}\to Y_1\oplus_1 Y_2$ be a coarse Lipschitz embedding.  As $f$ is a coarse Lipschitz embedding, there exists $C>0$ such that $\rho_f(t)\geq C^{-1}t-C$, and $\omega_{f_2}(t)\leq Ct+C$, for all $t>0$. 

Fix $k\in\N$, and $\delta\in (0,1)$. By  Lemma \ref{lemmasum}, there exists $x\in (\oplus X)_\mathcal{E}$, $\tau>k$, and a compact subset  $K\subset Y_1$ such that, for any  block sequence $(y_n)_n$ in $B_{(\oplus X)_\mathcal{E}}$, there exists $n_0\in\N$, such that 

$$f_1(x+\tau y_n)\subset K+\delta\tau B_{Y_1}, \ \ \text{ for all }\ \ n>n_0.$$\hfill

As $X$ does not contain $\ell_1$, there exists a normalized weakly null sequence $(x_n)_n$ in $X$, with $\inf_{n\neq m}\|x_n-x_m\|>0$.  As $X$ has the  $r$-Banach-Saks property, there exists $c>1$ (independent of $k$), such that, by taking a subsequence if necessary, we have that

$$\|x_{n_1}+\ldots +x_{n_k}\|\leq ck^{1/r},$$\hfill

\noindent for all $n_1<\ldots <n_k\in\N$. For each $j\in\N$, denote by $x_n^j$ the element of $(\oplus X)_\mathcal{E}$  whose $j$-th $\mathcal{E}$-coordinate is $x_n$, and all other $\mathcal{E}$-coordinates are zero. Define $\varphi_{k,\delta}:G_k(\N)\to (\oplus X)_\mathcal{E}$ as 

$$\varphi_{k,\delta}(n_1,\ldots ,n_k)=x+\frac{\tau}{c} k^{-1/r}(x^{n_1}_{n_1}+\ldots +x^{n_1}_{n_k}),$$\hfill

\noindent for all $(n_1,\ldots ,n_k)\in G_k(\N)$. As $\text{Lip}(f_2\circ\varphi_{k,\delta})\leq 2\tau k^{-1/r}Cc^{-1}+C$,  by Theorem \ref{KR4.2}, there exists $\M_{k,\delta}\subset \N$ such that

$$\text{diam}(f_2\circ\varphi_{k,\delta}(G_k(\M_{k,\delta})))\leq 2K\tau k^{1/l-1/r}Cc^{-1}+KCk^{1/l},$$\hfill

\noindent for some $K>0$ independent of $k$ and $\delta$.

Similarly as in the proof of Theorem \ref{jointlowerupper}, by passing to a further subsequence if necessary, we have that, for all $(n_1,\ldots ,n_k)\in G_k(\M_{k,\delta})$, 

$$f_1\circ \varphi_{k,\delta}(n_1,\ldots ,n_k)\subset K+\delta\tau B_{Y_1}.$$\hfill

\noindent Therefore, by taking a further subsequence, we can assume that $\text{diam}(f_1\circ \varphi_{k\delta}(\M_{k,\delta}))\leq 3\delta \tau$ (see Lemma 4.1 of \cite{KR}).

Without loss of generality, we may assume that $\M_{k+1,\delta}\subset \M_{k,\delta}$, for all $k\in\N$. Let $\M_\delta\subset \N$ be a diagonalization of  $(\M_k)_k$. Using the fact that  $X$ has the $r$-co-Banach-Saks property to the weakly null sequence $(x_{n_{2j-1}}-x_{n_{2j}})_j\in {\M_\delta}$, we get that, there exists $d>0$ (independent of $k$) such that, for all $k\in\N$, there exists $ n_1<\ldots <n_{2k}\in \M_{k,\delta}$, such that

$$dk^{1/r}\leq \|\sum_{j=1}^k(x_{n_{2j-1}}-x_{n_{2j}})\|.$$\hfill

\noindent  Therefore,  $\text{diam}(f\circ\varphi_{k,\delta}(G_k(\M_\delta)))\geq \tau dC^{-1}c^{-1}-C$, and we get that

$$\tau dC^{-1}c^{-1}-C\leq 2K\tau k^{1/l-1/r}Cc^{-1}+3Ck^{1/l}+ K\delta\tau,$$\hfill

\noindent for all $k\in\N$, and all $\delta\in (0,1)$. As $\tau>k$, we get that 

 $$dC^{-1}c^{-1}-Ck^{-1}\leq 2K k^{1/l-1/r}Cc^{-1}+KCk^{1/l-1}+ 3\delta,$$
\hfill 
 
 \noindent for all $k\in\N$, and all $\delta\in(0,1)$. As $l>r$, by letting $k\to \infty$ and $\delta\to 0$, we get  a contradiction.
\end{proof}

\begin{thm}\label{oqueestoufazendodavida2}
Let $1\leq q< r$ and $1<p<l$. Assume that

\begin{enumerate}[(i)]
\item  $X$ is an asymptotically $r$-uniformly  Banach space with the $r$-co-Banach-Saks property, and it does not contain $\ell_1$, 
\item $\mathcal{E}$ be a weakly null normalized $1$-unconditional basis with the $p$-co-Banach-Saks property, and the $p$-Banach-Saks property,
\item $Y_1$ be a Banach space with a $1$-unconditional basis satisfying a lower $\ell_q$-estimate with constant $1$, and 
\item  $Y_2$ be a reflexive asymptotically $l$-uniformly smooth Banach space. 
\end{enumerate}

\noindent  Then $(\oplus X)_\mathcal{E}$ does not coarse Lipschitz embed into $Y_1\oplus Y_2$.  
\end{thm}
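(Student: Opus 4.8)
The plan is to follow the proof of Theorem~\ref{oqueestoufazendodavida}, with the roles of the two ``coordinates'' of $(\oplus X)_\mathcal{E}$ interchanged: the ``horizontal'' direction along the basis $\mathcal{E}$ is now the one to which the Kalton--Randrianarivony concentration theorem is applied, and the ``vertical'' direction inside a fixed summand $X$ is the one carrying the approximate-midpoint argument. Suppose for contradiction that $f=(f_1,f_2)\colon(\oplus X)_\mathcal{E}\to Y_1\oplus_1 Y_2$ is a coarse Lipschitz embedding and fix $C>0$ with $\rho_f(t)\geq C^{-1}t-C$ and $\omega_{f_2}(t)\leq Ct+C$ for all $t\geq0$. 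Since $X$ does not contain $\ell_1$, Rosenthal's theorem yields a normalized weakly null sequence $(x_n)_{n=1}^\infty$ in $X$ with $\eta:=\inf_{n\neq m}\|x_n-x_m\|>0$; as $X$ is asymptotically $r$-uniformly smooth it has the $r$-Banach--Saks property (Proposition~\ref{lemmakalton}), so after passing to a subsequence $\|x_{n_1}+\dots+x_{n_k}\|\leq ck^{1/r}$ for all $k\leq n_1<\dots<n_k$, while the $r$-co-Banach--Saks property of $X$ supplies $\|\sum_{j=1}^k(x_{n_{2j-1}}-x_{n_{2j}})\|\geq dk^{1/r}$ along suitable blocks.

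The crux is to construct, for each $k\in\N$ and $\delta\in(0,1)$, a scale $\tau>k$, a point $\zeta\in(\oplus X)_\mathcal{E}$, a compact $K\subset Y_1$, and a map $\varphi_{k,\delta}\colon G_k(\N)\to(\oplus X)_\mathcal{E}$ whose displacement $\varphi_{k,\delta}(\bar n)-\zeta$ is normalized by $k^{-1/p}$ and supported on $k$ coordinates of $\mathcal{E}$, so that: (a) the estimate $\|e_{m_1}+\dots+e_{m_k}\|_E\asymp k^{1/p}$ (from the $p$-Banach--Saks and $p$-co-Banach--Saks properties of $\mathcal{E}$) yields $\text{Lip}(\varphi_{k,\delta})\lesssim \tau k^{-1/p}$ and, using the separation $\eta$ of the $(x_n)$, $\text{diam}(\varphi_{k,\delta}(G_k(\M)))\gtrsim\eta\tau$ for every infinite $\M\subset\N$; and (b) along every ``spread-out'' sequence in $G_k(\N)$, the values of $f_1\circ\varphi_{k,\delta}$ eventually lie in $K+\delta\tau B_{Y_1}$. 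Property (b) should come from an analogue of Lemma~\ref{lemmasum}: since genuine asymptotic smoothness is available only inside the summands $X$, one runs the approximate-midpoint construction of Lemma~\ref{weaknulllemma} and Lemma~\ref{lemma} inside ``diagonal'' copies $\Delta_{\bar m}=\{\xi^{m_1}+\dots+\xi^{m_k}:\xi\in X\}$, each linearly isometric to $X$ up to the scalar $\lambda_{\bar m}=\|e_{m_1}+\dots+e_{m_k}\|_E\asymp k^{1/p}$; the asymptotic $r$-uniform smoothness of $X$ (via Proposition~1.3 of \cite{DGJ}) then forces the midpoint-approximating vectors into $\text{Mid}(a,b,c\nu)$, and, because $q<r$, composing with $f_1$ and invoking Lemma~\ref{punfconlemma} for $Y_1$ places them in $K+\delta\tau B_{Y_1}$. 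A standard Ramsey argument together with the compactness of $K$ then upgrades this to $\text{diam}(f_1\circ\varphi_{k,\delta}(G_k(\M')))\leq 3\delta\tau$ on some infinite $\M'\subset\N$, exactly as in the proof of Theorem~\ref{jointlowerupper}.

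With the map in hand the argument closes as in Theorem~\ref{oqueestoufazendodavida}: applying Theorem~\ref{KR4.2} to $f_2\circ\varphi_{k,\delta}\colon G_k(\M')\to Y_2$ (reflexive and asymptotically $l$-uniformly smooth) produces an infinite $\M''\subset\M'$ with $\text{diam}(f_2\circ\varphi_{k,\delta}(G_k(\M'')))\lesssim \text{Lip}(f_2\circ\varphi_{k,\delta})\,k^{1/l}\lesssim \tau k^{1/l-1/p}+k^{1/l}$, while $f$ being a coarse Lipschitz embedding gives $\text{diam}(f\circ\varphi_{k,\delta}(G_k(\M'')))\geq \rho_f(c_1\tau)\geq C^{-1}c_1\tau-C$ for a constant $c_1>0$ independent of $k$ coming from (a); bounding the diameter of $f\circ\varphi_{k,\delta}$ in $Y_1\oplus_1 Y_2$ by the sum of the diameters of its two coordinates gives
\[
C^{-1}c_1\tau-C\ \leq\ c_2\,\tau k^{1/l-1/p}+c_3\,k^{1/l}+3\delta\tau
\]
for constants $c_2,c_3>0$. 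Dividing by $\tau>k$ and letting $k\to\infty$ and then $\delta\to0$ yields a contradiction, since $p<l$ forces $k^{1/l-1/p}\to0$ and $l>1$ forces $k^{1/l-1}\to0$. The one step I expect to be the genuine obstacle is the construction of $\varphi_{k,\delta}$ together with its midpoint lemma: one must reconcile the $k^{-1/p}$-normalization dictated by $\mathcal{E}$ (needed so that Kalton--Randrianarivony bites through $p<l$) with the requirement that the successive displacements of $\varphi_{k,\delta}$ be controlled by the asymptotic $r$-uniform smoothness of $X$ (needed so that the approximate-midpoint argument for $f_1$ bites through $q<r$) -- that is, one must check that the vectors used to build $\varphi_{k,\delta}$ simultaneously have small Lipschitz constant through $\mathcal{E}$ and fall into the approximate midpoints of $f_1$ through the $r$-smoothness of $X$. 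Once the correct analogue of Lemma~\ref{lemmasum} (and of Lemma~\ref{weaknullsumlemma}) is established in this setting, the remainder is a routine adaptation of the proofs of Theorem~\ref{jointlowerupper} and Theorem~\ref{oqueestoufazendodavida}.
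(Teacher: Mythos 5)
Your overall architecture matches the paper's proof exactly: the map $\varphi_{k,\delta}$ with displacement $\tfrac{\tau}{c}k^{-1/p}(x^{1}_{n_1}+\dots+x^{k}_{n_k})$ spread over $k$ coordinates of $\mathcal{E}$, the Lipschitz bound $\mathrm{Lip}(f_2\circ\varphi_{k,\delta})\lesssim \tau k^{-1/p}$ feeding into Theorem \ref{KR4.2}, the lower diameter bound coming from the $p$-co-Banach--Saks property of $\mathcal{E}$ together with the separation of $(x_n)_{n=1}^\infty$, and the closing inequality that exploits $p<l$. The gap sits precisely at the step you flag as the obstacle: the midpoint argument for $f_1$. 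You propose to run Lemma \ref{weaknulllemma} and Lemma \ref{lemma} inside the diagonal copies $\Delta_{\bar m}=\{\xi^{m_1}+\dots+\xi^{m_k}:\xi\in X\}$, but the displacement vectors $x^{1}_{n^i_1}+\dots+x^{k}_{n^i_k}$ carry \emph{different} entries $x_{n^i_1},\dots,x_{n^i_k}$ in the $k$ occupied coordinates, so they do not lie in any single diagonal copy. Consequently the weakly null sequence that must eventually fall into $\mathrm{Mid}(u,v,c\nu)$ is not a sequence in a (scaled, isometric) copy of $X$, and the inequality $\limsup_n\|w+z_n\|^r\leq\|w\|^r+c\limsup_n\|z_n\|^r$ from Proposition 1.3 of \cite{DGJ}, applied to $X$ alone, does not reach it. As written, property (b) of your plan is unproved.

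The paper's resolution is to enlarge the ambient space for the midpoint step from a diagonal copy to the whole span of the first $k$ coordinates: $(\oplus_{i=1}^kX)_{\ell_\infty}$ is asymptotically $r$-uniformly smooth because a finite $\ell_\infty$-sum of asymptotically $r$-uniformly smooth spaces is, and the norm that $\mathcal{E}$ induces on these $k$ coordinates is merely \emph{equivalent} to the $\ell_\infty$-norm (with a constant of order $\|e_1+\dots+e_k\|_E$, hence depending on $k$) --- which is exactly what Remark \ref{equivnorm} is designed to absorb. Applying Lemma \ref{lemma} to $f_1$ restricted to $(\oplus_{i=1}^kX)_\mathcal{E}$, with smoothness exponent $r$ against the lower $\ell_q$-estimate of $Y_1$ and $q<r$, and noting that the displacement sequence is coordinatewise weakly null and hence weakly null in this finite sum, yields your property (b); the Ramsey-plus-compactness upgrade to $\mathrm{diam}(f_1\circ\varphi_{k,\delta})\leq3\delta\tau$ and the rest of your argument then go through verbatim. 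A minor further remark: the estimates $\|x_{n_1}+\dots+x_{n_k}\|\leq ck^{1/r}$ and their co-Banach--Saks counterparts for $X$ that you extract at the outset are not needed here, since each occupied coordinate of the displacement carries a single normalized vector $x_{n_j}$; the $k^{1/p}$-normalization is governed entirely by the basis $\mathcal{E}$.
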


\begin{proof}
Let $f=(f_1,f_2):(\oplus X)_\mathcal{E}\to Y_1\oplus_1 Y_2$ be a coarse Lipschitz embedding. As $f$ is a coarse Lipschitz embedding, there exists $C>0$ such that $\rho_f(t)\geq C^{-1}t-C$, and $\omega_{f_2}(t)\leq Ct+C$, for all $t>0$. 

Fix $k\in \N$ and $\delta\in (0,1)$. As $X$ does not contain $\ell_1$, there exists a normalized weakly null sequence $(x_n)_n$ in $X$, with $\inf_{n\neq m}\|x_n-x_m\|>0$. For each $j\in\N$, denote by $x_n^j$ the element of $(\oplus X)_\mathcal{E}$  whose $j$-th $\mathcal{E}$-coordinate is $x_n$, and all other $\mathcal{E}$-coordinates are zero.  As $\mathcal{E}$ has both the $p$-co-Banach-Saks property and the $p$-Banach-Saks property, by taking a subsequence of $\mathcal{E}$ if necessary, we can assume that 

\begin{align}\label{cobanb}
dk^{1/p}\leq \|\sum_{j=1}^k(x^j_{n_{2j-1}}-x^j_{n_{2j}})\|\ \ \text{ and }\ \ \|x^1_{n_1}+\ldots +x^k_{n_k}\|\leq c k^{1/p},
\end{align}\hfill

\noindent for all $n_1<\ldots <n_{2k}\in\N$, where $c,d$ are positive constants (independent of $k$).

Let $(\oplus_{i=1}^k X)_\mathcal{E}$ be the subspace of $(\oplus X)_\mathcal{E}$ whose $j$-th coordinates are zero for all $j>k$, and define $\varphi:G_k(\N)\to (\oplus_{i=1}^k X)_\mathcal{E}$ as 

$$\varphi(n_1,\ldots ,n_k)=x+\frac{\tau}{c} k^{-1/p}(x^{1}_{n_1}+\ldots +x^{k}_{n_k}),$$\hfill

\noindent for all $(n_1,\ldots ,n_k)\in G_k(\N)$. 

As $(\oplus_{i=1}^k X)_{\ell_\infty}$ is asymptotically $p$-uniformly smooth, Lemma \ref{lemma} and Remark \ref{equivnorm} gives us that there exists  $x\in (\oplus_{i=1}^k X)_\mathcal{E}$, $\tau>k$, and a compact subset  $K\subset Y_1$ such that, for any weakly null sequence $(y_n)_n$ in $B_{(\oplus_{i=1}^k X)_\mathcal{E}}$, there exists $n_0\in\N$, such that 

$$f_1(x+\tau y_n)\subset K+\delta\tau B_{Y_1}, \ \ \text{ for all }\ \ n>n_0.$$\hfill

\noindent So, similarly as in the proof of Theorem \ref{jointlowerupper}, there exists $\M_0\subset \N$ such that, for all $(n_1,\ldots ,n_k)\in G_k(\M_0)$, we have

$$f_1\circ \varphi(n_1,\ldots ,n_k)\subset K+\delta\tau B_{Y_1}.$$\hfill

\noindent Therefore, by taking a further subsequence $\M_1\subset \M_0$, can assume that $\text{diam}(f_1\circ \varphi(\M_1))\leq 3\delta \tau$.

As $\text{Lip}(f_2\circ\varphi)\leq 2\tau k^{-1/p}Cc^{-1}+C$,  by Theorem \ref{KR4.2}, there exists $\M_2\subset \M_1$ such that

$$\text{diam}(f_2\circ\varphi(G_k(\M_2)))\leq 2K\tau k^{1/l-1/p}Cc^{-1}+KCk^{1/l},$$\hfill

\noindent for some $K>0$ independent of $k$ and $\delta$.

By Equation \ref{cobanb}, we have that   $\text{diam}(f\circ\varphi(G_k(\M_2)))\geq \tau dC^{-1}c^{-1}-C$, and we conclude  that

 $$dC^{-1}c^{-1}-Ck^{-1}\leq 2K k^{1/l-1/p}Cc^{-1}+KCk^{1/l-1}+ 3\delta,$$
\hfill 
 
 \noindent for all $\delta\in(0,1)$, and all $k\in\N$. As $l>p$, this gives us a contradiction.
\end{proof}

\begin{proof}[Proof of Theorem \ref{cortsisum}]
Theorem \ref{oqueestoufazendodavida} gives us the case $p<q$, and Theorem \ref{oqueestoufazendodavida2} the case $q<p$. 
\end{proof}

\end{document}